\documentclass{amsart}
\allowdisplaybreaks[4]
\usepackage{graphicx}
\usepackage{color}
\usepackage{bbm}
\usepackage{tikz}

\newcommand{\Z}{\mathbf{Z}}

\newcommand{\sig}{\sigma}

\newcommand{{\ba}}{\bf a}
\newcommand{\ve}{\varepsilon}
\newcommand{\la}{\lambda}
\newcommand{\La}{\Lambda}
\newcommand{\ga}{\gamma}

\newcommand{\pa}{\partial}
\newcommand{\ra}{\rightarrow}
\newcommand{\Om}{\Omega}
\newcommand{\del}{\delta}

\newcommand{\na}{\nabla}
\newcommand{\cd}{\cdot}

\newcommand{\De}{\Delta}
\newcommand{\al}{\alpha}

\newcommand{\be}{\begin{equation}}
\newcommand{\ee}{\end{equation}}

\newcommand{\om}{\omega}

\newtheorem{hyp}{Hypothesis}
\newtheorem{lem}{Lemma}{\bf}{\it}
{\it}{\rm}
\newtheorem{rem}{Remark}{\it}{\rm}
{\it}{\rm}

\newtheorem{theorem}{Theorem}
\newtheorem{proposition}{Proposition}
\newtheorem{corollary}{Corollary}
\numberwithin{theorem}{section}
\numberwithin{hyp}{section}
\numberwithin{lem}{section}
\numberwithin{rem}{section}
\numberwithin{equation}{section}
\numberwithin{proposition}{section}
\numberwithin{corollary}{section}
\title{ On  regularity of averaged Green's functions in homogenization of elliptic PDE}

\author{Joseph G. Conlon  and Michael Dabkowski}

\address{(Joseph G. Conlon): University of Michigan\\ Department of Mathematics\\ Ann Arbor,
  MI 48109-1109}
\email{conlon@umich.edu}

\address{(Michael Dabkowski): University of Michigan-Dearborn \\ Department of Mathematics and Statistics\\ Dearborn,
  MI 48128}
\email{mgdabkow@umich.edu}

\keywords{Euclidean field theory, pde with random coefficients, homogenization}
\subjclass{35R60, 82B20, 82B28}

\begin{document}

\maketitle
\pagestyle{myheadings}
\markboth{ Regularity of Averaged Green's Functions}{Joseph G. Conlon  and Michael Dabkowski}
\begin{abstract}
This paper is concerned with establishing estimates on averaged Green's functions for a uniformly elliptic divergence form
partial difference equation with random coefficients on  the $d$ dimensional integer lattice $\mathbb{Z}^d$.  It has previously been shown
that the averaged Green's function is point-wise well approximated by the homogenized Green's function at large length scales. Corresponding results also hold
for (fractional) derivatives of the averaged Green's function up to but not including the second derivative.  Second derivative results have been established 
when the elliptic equation generates a positive semi-group, as in the diagonal case. Here the second derivative result is shown more generally by using  an insight of Bourgain. 
\end{abstract}

\maketitle
\setcounter{tocdepth}{1}
\tableofcontents

\vspace{.1in}

\section{Introduction}\label{sec:intro}
Let $(\Om,\mathcal{F},P)$ be a probability space and  denote by $\langle \  \cd \ \rangle$ expectation w.r. to the measure $P$.   We assume that the $d$ dimensional integer lattice $\mathbb{Z}^d$ acts on $\Om$  by translation operators  $\tau_x:\Om\ra\Om, \ x\in\mathbb{Z}^d$, which are measure preserving and satisfy the properties $\tau_x\tau_y=\tau_{x+y}, \ \tau_0= \ {\rm identity}, \ x,y\in\mathbb{Z}^d$. 
Consider a bounded measurable function  ${\bf a}:\Om\ra\mathbb{R}^{d(d+1)/2}$  from $\Om$ to the space of symmetric $d\times d$ matrices which satisfies the quadratic form inequality  
\begin{equation} \label{A1}
\la I_d \le {\bf a}(\om) \le \ I_d, \ \ \ \ \ \om\in\Om,
\end{equation}
where $I_d$ is the identity matrix in $d$ dimensions and $0< \la\le 1$.
We shall be interested in solutions $u(x,\eta,\om)$ to the discrete elliptic equation
\be \label{B1}
\eta u(x,\eta,\om)+\nabla^*{\bf a}(\tau_x\om)\nabla u(x,\eta,\om)=h(x), \quad x\in \mathbb{Z}^d, \ \om\in\Om.
\ee
In (\ref{B1}) we take $\eta\ge 0$ and $\nabla$ the discrete gradient operator, which has adjoint $\nabla^*$. Thus $\nabla$ is a $d$ dimensional { \it column} operator and $\nabla^*$ a $d$ dimensional {\it row} operator, which act on  functions 
$\phi:\mathbb{Z}^d\ra\mathbb{R}$ by
\begin{eqnarray} \label{C1}
\na \phi(x) &=& \big( \na_1 \phi(x),... \ \na_d\phi(x) \big), \quad  \na_i \phi(x) = \phi (x + {\bf e}_i) - \phi(x),  \\
\na^* \phi(x) &=& \big( \na^*_1 \phi(x),... \ \na^*_d\phi(x) \big), \quad  \na^*_i \phi(x) = \phi (x - {\bf e}_i) - \phi(x). \nonumber
\end{eqnarray}
In (\ref{C1}) the vector  ${\bf e}_i \in \mathbb{Z}^d$ has 1 as the ith coordinate and 0 for the other coordinates, $1\le i \le  d$. The Green's function $(x,y)\ra G_{\mathbf{a},\eta}(x,y,\om), \ x,y\in\mathbb{Z}^d,$ for (\ref{B1}) is a solution to the equation
\be \label{D1}
\eta G_{\mathbf{a},\eta}(x,y,\om),+\nabla_x^*{\bf a}(\tau_x\om)\nabla_x G_{\mathbf{a},\eta}(x,y,\om) \ = \ \del(x-y), \quad x\in \Z^d, \ \om\in\Om,
\ee
and is therefore $\mathbf{a}$-harmonic when $\eta=0$ and  $x\in\mathbb{Z}^d-\{y\}$. 
In this paper we shall be interested in studying the averaged Green's function,
\be \label{E1}
\left\langle \ G_{\mathbf{a},\eta}(x,y,\cdot) \ \right\rangle \ = \ G_{\mathbf{a},\eta}(x-y) \ . 
\ee

It is well known \cite{k,pv,zko} that if the translation operators $\tau_x, \ x\in\mathbb{Z}^d$, are ergodic  on $\Om$ then solutions to the random equation (\ref{B1}) converge to solutions of a constant coefficient equation under suitable scaling. Thus suppose $f:\mathbb{R}^d\ra\mathbb{R}$ is a $C^\infty$ function with compact support and for $\ve$ satisfying $0<\ve\le 1$ set $h(x)=\ve^2f(\ve x), \ x\in\mathbb{Z}^d$, in (\ref{B1}).  Then $u(x/\ve,\ve^2\eta,\om)$ converges with probability $1$ as $\ve\ra 0$ to a function $u_{\rm hom}(x,\eta), \ x\in\mathbb{R}^d$, which is the solution to the constant coefficient elliptic PDE
\be \label{F1}
\eta u_{\rm hom}(x,\eta)+\na^* {\bf a}_{\rm hom}\na u_{\rm hom}(x,\eta)=f(x), \quad x\in\mathbb{R}^d,
\ee
where the $d\times d$ symmetric matrix ${\bf a}_{\rm hom}$ satisfies the quadratic form inequality (\ref{A1}).  This homogenization result can be viewed as a kind of strong law of large numbers, and it is natural to enquire if there is a corresponding central limit theorem for environments which are strongly mixing. This involves estimating the rate of convergence of  $u(x/\ve,\ve^2\eta,\om)$  to $u_{\rm hom}(x,\eta)$ as $\ve\ra0$. 

Rates of convergence questions in homogenization can  be formulated in terms of the behavior of solutions to (\ref{A1}) at large length scales. In this paper we shall be concerned with estimating the difference at large length scales  between
the averaged Green's function (\ref{E1}) and the homogenized Green's function $G_{{\bf a}_{\rm hom},\eta}(\cdot)$, which is the Green's function for (\ref{F1}). It was shown in \cite{cs1} that  for certain environments  there exists $\al>0$ such that
\be \label{G1}
|G_{{\bf a},\eta}(x)-G_{{\bf a}_{\rm hom},\eta}(x)|  \le  \frac{C}{(|x|+1)^{d-2+\alpha}} e^{-\gamma\sqrt{\eta} |x|}, \    \ x\in\mathbb{Z}^d-\{0\},  
\ee
where $C,\gamma>0$ are constants. A similar result holds for the first derivative:
\be\label{H1}
 |\na G_{{\bf a},\eta}(x)-\na G_{{\bf a}_{\rm hom},\eta}(x)|  \le \frac{C}{(|x|+1)^{d-1+\alpha}} e^{-\gamma\sqrt{\eta} |x|},   \    \ x\in\mathbb{Z}^d-\{0\}.
 \ee
 The method  of proof for (\ref{G1}), (\ref{H1}) extends to bounds on fractional derivatives up to but not including the second derivative.  If one makes a further assumption that the elliptic equation (\ref{A1}) generates a positive semigroup (as in the diagonal case) one can use \cite{cs1} the methods of Nash (as implemented in this context by Delmotte and Deuschel \cite{dd})  to obtain the corresponding second derivative estimate:
 \be \label{M1}
 |\na\na^* G_{{\bf a},\eta}(x)-\na\na^* G_{{\bf a}_{\rm hom},\eta}(x)|  \le \frac{C}{(|x|+1)^{d+\alpha}} e^{-\gamma\sqrt{\eta} |x|}  \    \ x\in\mathbb{Z}^d-\{0\}. 
\ee
Here we show that (\ref{M1}) holds more generally in the case of an i.i.d. environment and also for a massless Euclidean field environment.  For the i.i.d. environment we have the following:
\begin{theorem} Suppose ${\bf a}(\cdot)$ satisfies (\ref{A1}),  the matrices ${\bf a}(\tau_x\cdot), \ x\in\Z^d$, are independent, and $0<\eta\le1$. Then for $d\ge 2$ there exists $\alpha>0$ depending only on $d$ and  $\la>0$ in (\ref{A1})  such that  (\ref{M1}) holds for some positive constants $\ga, C$, depending only on $d$ and  $\la$. 
\end{theorem}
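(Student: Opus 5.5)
We follow the Fourier-space approach of \cite{cs1} and aim to control the second derivative with an idea from Bourgain's work on random operators. The starting point is the representation of the averaged Green's function as $\hat G_{{\bf a},\eta}(\xi)=1/[\eta+e(\xi)^*q(\xi,\eta)e(\xi)]$. Here $e(\xi)$ is the vector of discrete derivative symbols. The matrix $q(\xi,\eta)$ is built from $\langle {\bf a}\rangle$ minus a correction given by a Neumann-type series in the operator $T_{\xi,\eta}$ acting on $L^2(\Om)$. For an i.i.d. environment we expand ${\bf a}$ in an orthonormal chaos decomposition, using Walsh or Fourier basis functions of the independent variables ${\bf a}(\tau_x\cdot)$. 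In that setting $T_{\xi,\eta}$ becomes a bounded operator on a Fock-type space. Its norm is at most $1-\la$ in $L^2$, uniformly in $\xi$ and $\eta$.

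The estimates (\ref{G1}), (\ref{H1}) follow once $q(\xi,\eta)$ is H\"older continuous in $\xi$ of order $\alpha$. The bound (\ref{M1}) needs more. Since $\na\na^*$ of $G$ at scale $|x|$ is governed by derivatives of the symbol, we must show that $\xi\mapsto q(\xi,\eta)$ has $\alpha$-H\"older regularity, uniformly in $\eta$, on the whole torus. That regularity is enough to give decay $|x|^{-d-\alpha}$ for the inverse Fourier transform of $e(\xi)e(\xi)^*\hat G$, after subtracting the homogenized term with ${\bf a}_{\rm hom}=q(0,0)$. Differentiating in $\xi$ produces weighted operators, and for these we use Meyers-type $L^p$ estimates on the lattice. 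Concretely, the plan reduces the theorem to showing that the kernel $k(x)$ of the operator-valued function whose Fourier transform is $q(\xi,\eta)$ satisfies $\sum_{|x|\sim R}|k(x)|\le CR^{-\alpha}$. The exponential factor $e^{-\gamma\sqrt\eta|x|}$ comes from the same estimates with $\xi$ shifted into a complex strip of width $c\sqrt\eta$, which is harmless because $\eta>0$ keeps the symbol invertible there.

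The key step, and the main obstacle, is proving decay of $k$ beyond $|x|^{-d}$. Perturbation in $\la$ close to $1$ handles this easily, but we need it for all $\la$. This is where Bourgain's insight enters. In the i.i.d. case, the $n$-th term of the series for $k(x)$ is a sum over paths of sites $x_0=0,x_1,\dots,x_n=x$. Its weights are products of second derivatives of the constant-coefficient Green's function $\na\na^*G_{\eta}(x_{j+1}-x_j)$, and these are Calder\'on--Zygmund kernels. Independence forces each site in the path to be visited at least twice, since expectations of single chaos factors vanish.

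We would first bound the $n$-fold sums with CZ kernels in weighted $\ell^p$ spaces, with weight $|x|^{\alpha}$ and $p$ near $2$. This gives operator norms $(1-\la)^n(1+C\alpha+C|p-2|)^n$, which sum for small $\alpha$. The extra decay $R^{-\alpha}$ comes from a Muckenhoupt $A_p$ weight argument: for small $\alpha$ the power weight $|x|^{\alpha p}$ is admissible, and the CZ bound on weighted $\ell^p$ has constant close to the unweighted one. The repeated-site constraint from independence then lets us convert the $\ell^2$-type bound into a pointwise-in-$x$ bound on $k$ through a diagonal (Schur) argument.

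Finally, we assemble the pieces. We write $\na\na^*G_{{\bf a},\eta}-\na\na^*G_{{\bf a}_{\rm hom},\eta}$ as the inverse Fourier transform of a difference of symbols. We split the frequency integral at $|\xi|\sim 1/|x|$. At low frequencies we use $|q(\xi,\eta)-q(0,0)|\le C(|\xi|+\sqrt\eta)^{\alpha}$. At high frequencies we integrate by parts, using the kernel decay obtained above. This yields (\ref{M1}), with $\alpha$ and $\gamma$ depending only on $d$ and $\la$.
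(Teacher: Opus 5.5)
There is a genuine gap at the step you yourself single out as the main obstacle: decay of $\hat q(\cdot,\eta)$ faster than $|x|^{-d}$. The mechanism you propose cannot produce it.

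A power weight $|x|^{k}$ is admissible for the Calder\'on--Zygmund kernel $\na\na^*G_\eta$ on $\ell^p$ only when $kp'<d$ (Theorem 5.1). You need $p$ near $2$ and constants near $1$ so that $\sum_m(1-\la)^m\kappa^m$ converges for every $\la$, and that yields only $|x|^{-k}$ with $k$ small. Even letting $p\to\infty$, as in (\ref{N5}), the random chain $\hat h_m(x,\om,\eta)$ only approaches $|x|^{-d}$, and the constants grow like $\kappa_{p,k}^m$. A Schur-type pairing of two such chains at a repeated site then gives at best $\sum_y|y|^{-k}|x-y|^{-k}\sim|x|^{-(2k-d)}$. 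This is still slower than $|x|^{-d}$ because $k<d$.

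What is missing is Bourgain's argument itself:
\begin{itemize}
\item Split the path sum by a stopping time at the largest jump $|y_\tau-y_{\tau-1}|\sim 2^nR$. This produces an explicit factor $(2^nR)^{-d}$.
\item Control the two remaining chains by the truncated kernels $\na\na^*G_\eta\chi_R$, which are uniformly bounded on $\ell^p$ (Corollary 5.1, Lemma 5.2).
\item For the average, reconnect the two chains through the martingale covariance formula (\ref{B5})--(\ref{D5}). This supplies the further factor $R^{-(2d/p-d)}$ in Lemma 5.3.
\end{itemize}
All this requires $p$ well below $2$. So Hypothesis 4.1 comes with constants $C_\al^m$ that $(1-\la)^m$ does not beat, and the series in $m$ cannot be summed directly. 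The paper sums it only by interpolating (Lemma 4.3) against Lemma 4.2. That lemma has the geometric factor $m^{3d}(1-\la/2)^m$ but only $|x|^{-(d-\nu)}$ decay, and it rests on the weak-$L^p$ derivative bounds of Proposition 3.1. Your plan has no counterpart to either the stopping time or this trade-off.

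The assembly also needs repair, on three points.
\begin{itemize}
\item H\"older continuity of $q$ in $\xi$ does not give $|x|^{-d-\al}$ decay for the Fourier inverse of $e(\xi)e(\xi)^*\{[\eta+e(\xi)^*q(\xi,\eta)e(\xi)]^{-1}-[\eta+e(\xi)^*\mathbf{a}_{\rm hom}e(\xi)]^{-1}\}$. A H\"older function in general has Fourier coefficients decaying only like $|x|^{-\al}$. The paper handles the terms of the $d$-fold integration by parts with the weak-$L^p$ bounds on $\xi$-derivatives of $q$ up to order $d-1$, plus the fractional order $d-\nu$ (Corollary 3.1). Decay of $\hat q$ is used only for the terms where every derivative falls on $q$, through Parseval (\ref{AS4}).
\item For those terms your target $\sum_{|x|\sim R}|k(x)|\le CR^{-\al}$ is too weak. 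The $y=0$ summand $\hat A(0,\eta)\hat B(x,\eta)$ in (\ref{AS4}), with $\hat B(x,\eta)$ essentially $x_1^d\hat q(x,\eta)$, needs pointwise control of $|x|^d|\hat q(x,\eta)|$. A dyadic $\ell^1$ bound only gives $|x|^{d-\al}$ there. The paper instead has the pointwise bound $|\hat q(x,\eta)|\le C(1+|x|)^{-d-\beta}$ of Lemma 4.3. This gives Proposition 4.1 and the H\"older continuity of $\na\na^*G_{\mathbf{a},\eta}$ (Corollary 4.1), which is the input actually used in \S7 of \cite{cs1}.
\item Your low-frequency step uses $|q(\xi,\eta)-q(0,0)|\le C(|\xi|+\sqrt\eta)^\al$, i.e.\ H\"older continuity in $\eta$ as well (Hypothesis 3.1). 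This does not follow from kernel decay at fixed $\eta$ and needs its own proof. The paper supplies it via the Poincar\'e inequality of Lemma 5.1.
\end{itemize}
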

The proof of Theorem 1.1 is based on an insight of Bourgain \cite{b}, which was further developed by Kim and Lemm \cite{kl}.  Corollary 1.5 of \cite{kl} obtains bounds on derivatives of $G_{{\bf a},\eta}(\cdot)$ 
up to $d+1$ derivatives {\it provided} $1-\la$ is sufficiently small. In contrast, Theorem 1.1 holds for all $\la, \ 0<\la\le1$. In \cite{b} Bourgain makes subtle use of Poisson kernels and Chebyshev polynomials (the disjointification lemma)   to make certain estimates. In $\S5$ we replace these  by more conventional arguments, using the covariance formula (\ref{B5})-(\ref{D5}).  In the appendix we discuss the main results of Bourgain and the improvements by  Kim-Lemm. The method of Kim-Lemm is 
essentially an iteration of Bourgain's method.  We  illustrate  graphically why this is possible and why no further improvement can be carried out. The reason is that the Bourgain and Kim-Lemm improvement correspond to tree graphs, whereas a further iteration of Bourgain's method would introduce a cycle into the graph. 

We also consider here probability spaces $(\Om,\mathcal{F},P)$ on certain Euclidean fields. These Euclidean fields are determined by a potential $V : \mathbb{R}^d \ra \mathbb{R}$ which is a $C^2$ uniformly convex function.  Thus the second derivative ${\bf a}(\cdot)=V''(\cdot)$ of $V(\cdot)$ is assumed to satisfy the inequality (\ref{A1}). Next consider functions 
$\phi : \mathbb{Z}^d \ra \mathbb{R}$ on the integer lattice in $\mathbb{R}^d$.  Let $\Om$ be the space of
all such functions and $\mathcal{F}$ be the Borel algebra generated by finite 
dimensional rectangles 
$\{ \phi \in \Om: \  |\phi(x_i) - a_i| < r_i, \ i=1,...,N\}$, 
$x_i \in \mathbb{Z}^d, \ a_i \in \mathbb{R}, \ r_i > 0, \ i=1,...,N, \ N \ge 1$.  The translation operators $\tau_x:\Om\ra\Om, \ x\in\mathbb{Z}^d$, are defined by $\tau_x\phi(z)=\phi(x+z), \ z\in\mathbb{Z}^d$. For any $d\ge 1$ and $m>0$ one can define \cite{c1,fs} a unique ergodic translation invariant probability 
measure $P$ on $(\Om, \mathcal{F})$ which depends on the function $V$ and $m$.  The 
measure is formally given as
\begin{equation} \label{N1}
\exp \left[ - \sum_{x\in \Z^d} V\left( \na\phi(x)\right)+m^2\phi(x)^2 \right] \prod_{x\in \Z^d} d\phi(x)/{\rm normalization}.
\end{equation}

The limit as $m\ra 0$ of the measure (\ref{N1}) is a probability measure on gradient fields $\om:\mathbb{Z}^d\ra\mathbb{R}^d$, where formally $\om(x)=\na(\phi(x)), \ x\in\mathbb{Z}^d$. This massless field measure is ergodic with respect to translation operators \cite{c1,fs} for all $d\ge 1$. In the case $d=1$ it has a simple structure since  then the variables $\om(x), \ x\in\mathbb{Z}$, are i.i.d.  Note that in the probability space $(\Om,\mathcal{F},P)$ for the massless field,  the Borel algebra $\mathcal{F}$ is  generated by the intersection of finite dimensional rectangles and the hyperplanes imposing the gradient constraints for $\om(\cdot)$.  For $d\ge 3$ the gradient field  measure induces a measure on fields $\phi:\mathbb{Z}^d\ra\mathbb{R}$ which is simply the limit of the measures (\ref{N1})  as $m\ra 0$.   For $d=1,2$ the $m\ra 0$ limit of the measures (\ref{N1}) on fields  $\phi:\mathbb{Z}^d\ra\mathbb{R}$ does not exist.   However the limit of the gradient field measure does exist. 
\begin{theorem} Let $\tilde{{\bf a}}:\mathbb{R}^d\ra\mathbb{R}^{d(d+1)/2}$  be a $C^1$ function on $\mathbb{R}^d$ with values in the space of symmetric $d\times d$ matrices which satisfy the quadratic form inequality  (\ref{A1}). Let $(\Om, \mathcal{F}, P)$ be the probability space of  gradient fields $\om(\cdot)=\na\phi(\cdot)$ determined by the limit of (\ref{N1}) as $m\ra 0$, and set ${\bf a}(\cdot)$ in (\ref{B1}) to be ${\bf a}(\om)=\tilde{{\bf a}}(\om(0)), \ \om\in\Om$.  
Suppose in addition that the derivative $D\tilde{{\bf a}}(\cdot)$ of $\tilde{{\bf a}}(\cdot)$ satisfies  the inequality $\|D\tilde{{\bf a}}(\cdot)\|_{\infty}\le \La$.  Then for $d\ge 2$ there exists $\alpha>0$ depending only on $d$ and  $\la$, such that (\ref{M1}) holds for some positive constants $\ga$ and $C=C_1[\La/\sqrt{\la}+1]$, where $\ga,C_1$ depend only on $d$ and $\la$. 
\end{theorem}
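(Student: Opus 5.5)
The plan is to reduce Theorem 1.2 to the same Bourgain-type perturbative framework used for Theorem 1.1. The one change is that the independence of the ${\bf a}(\tau_x\cdot)$ is replaced by a covariance (Brascamp--Lieb / Helffer--Sj\"ostrand) inequality for the massless gradient field measure.

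\textbf{Step 1: write the averaged Green's function via a self-energy kernel.} Following \cite{cs1} and Bourgain \cite{b}, represent $G_{{\bf a},\eta}$ through a kernel $K_\eta$:
\[
\eta G_{{\bf a},\eta}+\nabla^*\bigl[{\bf a}_{\rm hom}\delta+K_\eta\bigr]*\nabla G_{{\bf a},\eta}=\delta ,
\]
where $K_\eta$ is the averaged ``self-energy'' obtained from the random part ${\bf b}={\bf a}-\langle{\bf a}\rangle$. Expanding in the resolvent of the constant-coefficient operator gives a Neumann-type series.

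\textbf{Step 2: reduce (\ref{M1}) to a decay bound on $K_\eta$.} The second-derivative estimate (\ref{M1}) follows by Fourier analysis once we have
\[
|K_\eta(x)|\le \frac{C}{(|x|+1)^{d+\alpha}}\,e^{-\gamma\sqrt{\eta}|x|}.
\]
Indeed, $\hat K_\eta$ is then $C^{\alpha'}$ in $\xi$ with the right behaviour at $\xi=0$. Hence $\xi\xi^T/\bigl(\eta+\xi^T({\bf a}_{\rm hom}+\hat K_\eta(\xi))\xi\bigr)$ differs from the homogenized symbol by a term whose inverse transform decays like $|x|^{-d-\alpha}$. This is the same conversion as in the i.i.d. case.

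\textbf{Step 3: prove the kernel bound with a covariance estimate.} Bourgain's insight bounds $K_\eta(x)$ by a covariance between functionals localized near $0$ and near $x$. In the i.i.d. case independence kills the linear terms. For the gradient field we instead use the covariance formula (\ref{B5})--(\ref{D5}) in its Helffer--Sj\"ostrand form:
\[
{\rm Cov}(F_1,F_2)=\Bigl\langle \sum_{y,y'} \partial_y F_1\,\mathcal{G}(y,y')\,\partial_{y'}F_2\Bigr\rangle ,
\]
where $\mathcal{G}$ is the Green's function of a random elliptic operator with coefficients $V''$, so that $|\nabla\nabla\mathcal{G}(y,y')|$ is controlled in an averaged sense by $\lambda^{-1}$. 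The derivatives $\partial_y F_i$ are computed by the chain rule through ${\bf a}(\tau_y\omega)=\tilde{{\bf a}}(\omega(y))$. This is where $\|D\tilde{\bf a}\|_\infty\le\Lambda$ enters, and it produces the factor $\Lambda/\sqrt\lambda$ in $C$. Each $\partial_y F_i$ is a product of gradients of random Green's functions. Meyers-type $L^p$ ($p>2$) estimates for those gradients, as in \cite{cs1}, give the extra $|x|^{-\alpha}$ gain beyond the naive $|x|^{-d}$.

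\textbf{Main obstacle.} The hardest part is Step 3. The summation over $y,y'$ against $\nabla\nabla\mathcal{G}$ is borderline: it behaves like a Calder\'on--Zygmund kernel $|y-y'|^{-d}$, and pointwise this would only give $|x|^{-d}\log|x|$. To gain $\alpha$, I would first try bounding the double sum by the $L^p$ boundedness of the singular integral operator $\nabla\nabla\mathcal{G}$ in weighted $\ell^p$ spaces, with $p$ close to $2$ determined by $\lambda$. This should be paired with the Meyers higher integrability of the random gradients, splitting space into the regions near $0$, near $x$, and far from both. The exponential factor $e^{-\gamma\sqrt\eta|x|}$ should come from the standard weighted-energy argument with weight $e^{\gamma\sqrt\eta|\cdot|}$, with $\gamma$ small depending on $\lambda$.
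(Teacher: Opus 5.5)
Your overall frame (an effective multiplier $q(\xi,\eta)$ for the averaged Green's function, with a Helffer--Sj\"ostrand-type covariance formula in place of independence) matches the paper. The gap is in Step 3: the decisive estimate is deferred there and has no working mechanism.

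Meyers-type integrability slightly above $L^2$ is the kind of input that gives H\"older continuity of $q$ in Fourier space (Hypothesis 3.1). The paper stresses that the Fourier-side information on $q$ (Corollary 3.1) only yields $|\hat q(x,\eta)|\le C_\nu(1+|x|)^{-d+\nu}$. You give no argument for how Meyers would produce the pointwise $|x|^{-d-\al}$ decay that is needed.

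Nor is the unexpanded self-energy a covariance of functionals localized near $0$ and near $x$. In the paper that structure exists only after two steps. First, one expands $q=\langle\mathbf{a}\rangle-\sum_{m\ge1}h_m$ with $\mathbf{b}=I_d-\mathbf{a}$, so that $\|PT_{\xi,\eta}\mathbf{b}\|\le 1-\la$. (Your Step 1 expansion in $\mathbf{a}-\langle\mathbf{a}\rangle$ about a constant-coefficient operator need not converge at large contrast.) Second, one cuts each path $0=y_0,\dots,y_m=x$ at its first jump of length at least $2^nR$, $R=|x|/m$.

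The paper's argument then runs as follows.
\begin{itemize}
\item The truncated Calder\'on--Zygmund bound of Corollary 5.1 and H\"older's inequality give Bourgain's Lemma 5.2, valid pointwise in $\om$: $\hat h_m(x,\om,\eta)$ already decays like $(1+|x|)^{-k}$ for any $k<d$, at a cost $C^m$.
\item The average is then a sum of covariances (\ref{V5}) of the two halves. Lemma 6.1, derived from the dynamics of \cite{cd}, reconnects them through a kernel bounded only by $C(1+|y-y'|)^{-k}$ for some small $k>0$ (via $\kappa_{2,k}(1-\la)<1$). This is weaker than the Calder\'on--Zygmund rate you posit for $\na\na\mathcal{G}$, and it suffices only because each half already decays almost like $|\cdot|^{-d}$.
\item The outcome is Hypothesis 4.1 with a factor $C_\al^m$. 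This growth is beaten only by interpolating against the $m^{3d}(1-\la/2)^m(1+|x|)^{-d+\nu}$ bound of Lemma 4.2, which is Lemma 4.3.
\end{itemize}
Your sketch has no counterpart of the stopping-time decomposition, of the pointwise Bourgain bound, or of this control of the growth in $m$.

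Step 2 is also incomplete. Decay of the kernel of $q(\cdot,\eta)$ at fixed $\eta$ gives H\"older continuity in $\xi$. But comparing with $\mathbf{a}_{\rm hom}=q(0,0)$ in the range $\sqrt{\eta}|x|\le 1$ requires $\|q(0,\eta)-q(0,0)\|\le C\eta^{\al/2}$, which does not follow from kernel decay. This is the $\eta$-part of Hypothesis 3.1, which the paper imports from \cite{cs1} for the gradient-field environment.

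Moreover, passing from kernel decay to (\ref{M1}) is not a soft Fourier fact: $q$ is merely H\"older and enters nonlinearly in a symbol that is singular at $\xi=0$. The paper combines the weak-$L^{d/r}$ bounds on $\xi$-derivatives of $q$ (Corollary 3.1) with Parseval against $\hat q$ (Proposition 4.1). This gives the H\"older estimate of Corollary 4.1 for $\na\na^*G_{\mathbf{a},\eta}$, and only then does the paper run the argument of \S7 of \cite{cs1}.
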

It was shown by Duerinckx, Lemm and Pagano \cite{dlp} that Theorem 2.1 holds for a Gaussian measure (\ref{N1}) i.e. $V(\cdot)$ quadratic, provided $1-\la$ is sufficiently small. Parallel to the earlier results of Kim and Lemm \cite{kl} for the i.i.d. case, they also establish  (Corollary 3.8 of \cite{dlp}) estimates corresponding to (\ref{N1}) for derivatives beyond the second derivative estimate.

\subsection*{Notation}
There are various parameters and notations used  throughout the paper, which we summarize here:
\begin{itemize}
\item  The parameters $\la,\eta$ which are associated with the uniform convexity of the function $\mathbf{a}(\cdot)$ in (\ref{A1}) and the elliptic equation (\ref{B1}). 
The parameter $\la$ also arises as a bound on the second derivative of the function $V(\cdot)$ in  (\ref{N1}). The parameter $\eta$ occurs in (\ref{C3}) in the definition of the Green's function.
\smallskip\item  We denote by $\mathbb{T}^d$ the $d$ dimensional torus, so  $\mathbb{T}^d\equiv[-\pi,\pi]^d$. A  function $f: \mathbb{T}^d\ra\mathbb{C}$ is represented as a periodic function $f: [-\pi,\pi]^d\ra\mathbb{C}$.  The function $f(\cdot)$ may be represented in terms of its Fourier transform $\hat{f}:\mathbb{Z}^d\ra\mathbb{C}$ by the series
\be \label{O1}
f(\xi) \ = \ \sum_{x\in\mathbb{Z}^d} \hat{f}(x)e^{ix\cdot\xi} \ , \quad \xi\in[-\pi,\pi]^d \ .
\ee
\smallskip\item For a real valued function $\phi(\cdot)$ on a periodic box in $\mathbb{Z}^d$, we denote the finite difference gradient by the column vector  $\na\phi(\cdot)$ as in (\ref{C1}), and the row operator adjoint of $\na$ by $\na^*$.
The negative finite difference Laplacian $-\De$ is given by $-\De=\na^*\na$. 
\smallskip\item Gradients of real valued functions  $\phi(\cdot)$ are denoted by $\om(\cdot)$.  This notation applies both when the function is deterministic and random.
\smallskip\item A vector $v\in\mathbb{C}^d$ is considered to be a column vector,  with its adjoint being the corresponding row vector $v^*$. The  Euclidean inner product  on a  complex Hilbert space $\mathcal{H}$ is denoted by $[\cdot,\cdot]_{\mathcal{H}}$. 
 The  sesquilinear form $[v_2,v_1]\ra [v_2,v_1]_{\mathcal{H}}$  is linear in $v_1$ and sesquilinear in $v_2$.
 \smallskip\item For $1\le p<\infty$ the space $\ell_p(\mathbb{Z}^d)$  denotes a space of $p$ summable functions on  $\mathbb{Z}^d$ with norm $\|\cdot\|_p$. We also use weighted spaces $\ell_{p,k}(\mathbb{Z}^d)$ with norm $\|\cdot\|_{p,k}$  as defined by (\ref{L5}). In $\S3$ we consider spaces $L^p(\mathbb{T}^d)$ of $p$ integrable functions on $\mathbb{T}^d$ and the corresponding weak spaces  $L_w^p(\mathbb{T}^d)$ -see (\ref{X3})-(\ref{AA3}).
\end{itemize}

\vspace{.1in}
\section{Tools for proofs}
We extend here the methods used in \cite{cs1} by including the important insight of Bourgain \cite{b}, which is Lemma 1 of his paper.  Our starting point as in \cite{cs1}  is the representation (\ref{A4}). Then one concentrates on establishing
properties of the function $\xi\ra q(\xi,\eta), \ \xi\in\mathbb{T}^d, \ 0<\eta\le 1,$  which are uniform in $\eta$ and imply theorems 1.1,1.2. It was already shown in \cite{cn2} that derivatives $\xi\ra (\pa/\pa \xi_1)^r q(\xi,\eta), \ \xi=[\xi_1,\dots,\xi_d]\in\mathbb{T}^d,$ are in  weak spaces $L^p_w(\mathbb{T}^d)$ with $p=d/r$, provided $r<d$. Here $r$ does not have to be an integer. Fractional derivatives of $q(\cdot,\eta)$ are defined in  the statement of Corollary 3.1. This property of $q(\cdot,\eta)$ is almost sufficient to prove theorems 1.1,1.2, but not quite. In \cite{cs1} these theorems were proved by using the additional fact that the function  $x\ra \na\na^*G_{{\bf a},\eta}(x), \ x\in\mathbb{Z}^d,$  is H\"{o}lder continuous (see Corollary 4.1). Delmotte and Deuschel \cite{dd} established the H\"{o}lder continuity in the case when the elliptic equation (\ref{B1}) generates a positive semigroup.  Bourgain's method enables one to  bypass this assumption.

Using Bourgain's method, we are able to show that the Fourier transform  $x\ra \hat{q}(x,\eta), \ x\in\mathbb{Z}^d,$  of $q(\cdot,\cdot)$ satisfies an inequality $|\hat{q}(x,\eta)|\le C/[1+|x|]^{d+\beta}, \ x\in\mathbb{Z}^d, $ for some $\beta>0$ (see Lemma 4.3). 
This is a stronger inequality than what can be obtained using the known properties of  derivatives of the function $\xi\ra q(\xi,\eta), \ \xi\in\mathbb{T}^d$, which merely imply that  $|\hat{q}(x,\eta)|\le C_\nu/[1+|x|]^{d-\nu}, \ x\in\mathbb{Z}^d, $ for any $\nu>0$. 
The main point in the paper is that one can take advantage of the stronger inequality for $\hat{q}(\cdot,\eta)$ by using Parseval's theorem-see equation (\ref{AS4}).

\vspace{.1in}
\section{Fourier space representation and estimates}
We shall be concerned with studying properties of the function $q(\xi,\eta), \ \xi\in\mathbb{R}^d, \ \eta>0$, which is periodic in $\mathbb{R}^d$  with fundamental domain $\mathbb{T}^d$. It is defined in terms of an operator $T_{\xi,\eta}$ on $L^2(\Om;\mathbb{C}^d)$ and the $d\times d$ matrix valued function 
$\mathbf{b}(\om)=I_d-\mathbf{a}(\om), \ \om\in\Om$.  Since $\la I_d\le \mathbf{a}(\cdot)\le I_d$ in the quadratic form sense, it follows that $0\le \mathbf{b}(\cdot)\le (1-\la)I_d$.  We have from (2.13) of \cite{cs1} that
\be \label{A3}
T_{\xi,\eta} \ = \ f(\xi-A)\  ,  \quad\quad  f(\zeta) \ = \ \frac{e(\zeta)e^*(\zeta)}{\eta+e(\zeta)^*e(\zeta)} \ ,  \quad \zeta \in\mathbb{R}^d \ ,
\ee
where $A$ is the generator of the translation group $\tau_x, \ x\in\mathbb{Z}^d$, so $\tau_x=\exp[iA\cdot x]$. We may write $T_{\xi,\eta}$ in configuration space representation as
\be \label{B3}
T_{\xi,\eta} g(\om) \ = \ \sum_{x\in\mathbb{Z}^d} \na\na^*G_\eta(x)e^{ix\cdot\xi}g(\tau_{-x}\om) \ , \quad g(\cdot)\in L^2(\Om;\mathbb{C}^d) \ ,
\ee
where $G_\eta:\mathbb{Z}^d\ra\mathbb{R}$  is the Green's function which satisfies the equation,
\be \label{C3}
\eta G_\eta(x)+\na^*\na G_\eta(x) \ = \ \del(x), \quad x\in\mathbb{Z}^d \ .
\ee
We have from (2.19), (2.20) of \cite{cs1} that
\begin{multline} \label{D3}
q(\xi,\eta) \ = \ \left\langle \  \mathbf{a}(\cdot) \  \right\rangle  -\sum_{m=1}^\infty h_m(\xi,\eta) \ , \quad
h_m(\xi,\eta)   \ = \ \left\langle \  \mathbf{b}(\cdot)[PT_{\xi,\eta}\mathbf{b}(\cdot)]^m \  \right\rangle  \  ,
\end{multline}
where $P$ is the projection on $L^2(\Om;\mathbb{C}^d)$  orthogonal to the constant.

 In this section we shall obtain estimates on the functions $\xi\ra h_m(\xi,\eta), \ \xi\in\mathbb{R}^d$, and their analytic continuations into $\mathbb{C}^d$. 
Towards this end we first obtain estimates on the operator $T_{\xi,\eta}$. 
One easily sees from the representation (\ref{A3})  that  $T_{\xi,\eta}$  is a bounded operator on $L^2(\Om;\mathbb{C}^d)$ with norm satisfying $\|T_{\xi,\eta}\|\le 1$.  We give an alternative proof of  Lemma 2.1 of \cite{cs1}
based on this representation. 
\begin{lem}
For fixed $\eta$ satisfying  $0<\eta\le 1$,  the $C^\infty$ operator value function  $\xi\ra T_{\xi,\eta}$  from $\mathbb{R}^d$ to the space of bounded linear operators $\mathcal{L}[L^2(\Om;\mathbb{C}^d)]$
on $L^2(\Om;\mathbb{C}^d)$  has an analytic continuation to a region  $\{\xi\in\mathbb{C}^d:  |\Im\xi|^2<c_d\eta\}$ for some constant $c_d>0$ depending only on $d$.  Furthermore,  for $\xi$ in this region there exists a constant 
$C_d$ depending only on $d$ such that
\be \label{E3}
\|T_{\xi,\eta}\|_{L^2(\Om;\mathbb{C}^d)} \ \le \ 1+\frac{C_d|\Im\xi|^2}{\eta} \ , \ \quad \|T_{\xi,\eta}-T_{\Re\xi,\eta}\|_{L^2(\Om;\mathbb{C}^d)} \ \le \ \frac{C_d|\Im\xi|}{\sqrt{\eta}} \ .
\ee
\end{lem}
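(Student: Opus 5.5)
Since $T_{\xi,\eta}=f(\xi-A)$ with $A$ the self-adjoint (commuting, vector) generator of the unitary group $\tau_x$, the spectral theorem represents $T_{\xi,\eta}$ as multiplication by the matrix function $\zeta\mapsto f(\xi-\zeta)$ on the joint spectrum of $A$. The joint spectrum lies in $\mathbb{T}^d$, viewed periodically, because $e(\zeta)$ is built from $e^{i\zeta_j}-1$. So it suffices to prove scalar (matrix-valued) estimates on $f$. We need that $f(\xi-\zeta)$ continues analytically in $\xi$ for real $\zeta$ and $|\Im\xi|^2<c_d\eta$, uniformly in $\zeta$, with
\[
\sup_{\zeta\in\mathbb{R}^d}\|f(\xi-\zeta)\|\le 1+C_d|\Im\xi|^2/\eta,\qquad \sup_{\zeta}\|f(\xi-\zeta)-f(\Re\xi-\zeta)\|\le C_d|\Im\xi|/\sqrt\eta .
\]
Uniform bounds and analyticity of the symbol then transfer to operator-valued analyticity, for instance via the Cauchy integral formula applied under the spectral integral, and to the same operator norm bounds.

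Write $e_j(\zeta)=e^{i\zeta_j}-1$, so $e(\zeta)$ is entire and $e^*(\zeta)$ is continued as the entire function $\overline{e(\bar\zeta)}$, with components $e^{-i\zeta_j}-1$. Then $D(\zeta)=\eta+e^*(\zeta)e(\zeta)=\eta+\sum_j 2(1-\cos\zeta_j)$ is entire. Put $\zeta=\theta+i\mu$ with $\theta$ real. Using $|e_j(\zeta)-e_j(\theta)|\le C|\mu_j|$ and $|\mu|\le1$, we get
\[
|e_j(\zeta)|\le |e_j(\theta)|+C|\mu|,\qquad |D(\zeta)-D(\theta)|\le C|\mu|\,|e(\theta)|+C|\mu|^2 .
\]
Since $D(\theta)=\eta+|e(\theta)|^2$, we have $|\mu|\,|e(\theta)|\le \tfrac14 D(\theta)+C|\mu|^2$. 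The condition $|\mu|^2<c_d\eta$ then gives $|D(\zeta)-D(\theta)|\le \tfrac12 D(\theta)$. So $D$ does not vanish there, which yields the analyticity, and $|D(\zeta)|\ge D(\theta)/2$.

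For the difference estimate, decompose
\[
f(\zeta)-f(\theta)=\frac{e(\zeta)e^*(\zeta)-e(\theta)e(\theta)^*}{D(\zeta)}+e(\theta)e(\theta)^*\Big[\frac1{D(\zeta)}-\frac1{D(\theta)}\Big].
\]
The first numerator is bounded by $C|\mu|(|e(\theta)|+|\mu|)$. Dividing by $D(\theta)\ge\eta+|e(\theta)|^2$ and using $|e(\theta)|/D(\theta)\le 1/(2\sqrt\eta)$ together with $|\mu|^2/\eta\le |\mu|/\sqrt\eta\cdot\sqrt{c_d}$ gives $C|\mu|/\sqrt\eta$. The second term is bounded by
\[
|e(\theta)|^2\cdot\frac{|D(\zeta)-D(\theta)|}{|D(\zeta)|\,D(\theta)}\le C\big(|\mu||e(\theta)|+|\mu|^2\big)/D(\theta),
\]
which is again $\le C|\mu|/\sqrt\eta$. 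This proves the second inequality in (\ref{E3}).

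The first inequality is the delicate one. The crude consequence $\|f(\zeta)\|\le 1+C|\mu|/\sqrt\eta$ of the second inequality is not good enough; we need the quadratic gain $|\mu|^2/\eta$. I would handle it by writing $f(\zeta)=v w^*/D(\zeta)$ with $v=e(\zeta)$ and $w=\overline{e(\bar\zeta)}$, so that $\|f\|=|v||w|/|D(\zeta)|$. One has $|v|^2=\sum_j|e^{i\theta_j-\mu_j}-1|^2$, and similarly for $w$ with $-\mu_j$. A direct expansion gives
\[
|v|^2=\sum_j\big(e^{-2\mu_j}-2e^{-\mu_j}\cos\theta_j+1\big),
\]
so $|v||w|\le\tfrac12(|v|^2+|w|^2)=\sum_j(\cosh 2\mu_j-2\cosh\mu_j\cos\theta_j+1)=|e(\theta)|^2+O(|\mu|^2)$; the linear terms in $\mu$ cancel. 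For the denominator,
\[
\Re D(\zeta)=\eta+\sum_j 2(1-\cos\theta_j\cosh\mu_j)\ge D(\theta)-C|\mu|^2,
\]
again with no linear term. Hence $\|f(\zeta)\|\le (|e(\theta)|^2+C|\mu|^2)/(D(\theta)-C|\mu|^2)\le 1+C'|\mu|^2/\eta$ for $|\mu|^2\le c_d\eta$, since $D(\theta)\ge\eta$. The main obstacle is precisely this cancellation of first-order terms, and the symmetrization $|v||w|\le\frac12(|v|^2+|w|^2)$ is the step I expect to make it work.
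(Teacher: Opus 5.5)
Your proof is correct and complete. The paper also reduces to $A\equiv 0$, i.e.\ to uniform bounds on the matrix symbol $f(\xi-\zeta)$ over real $\zeta$. Your proof of the second inequality in (\ref{E3}) is essentially the paper's: it uses $|e(\Re\xi)||\Im\xi|\le (|\Im\xi|/\sqrt{\eta})\sqrt{\eta}|e(\Re\xi)|$ together with the lower bound on the denominator.

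The genuine difference is in the first inequality.

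\emph{Your route.} You remove the first-order terms in $\mu=\Im\xi$ exactly, using the symmetrization $|v||w|\le\frac12(|v|^2+|w|^2)=|e(\theta)|^2+O(|\mu|^2)$ and the identity $\Re D(\zeta)=D(\theta)-\sum_j 2\cos\theta_j(\cosh\mu_j-1)$.

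\emph{The paper's route.} The paper needs no cancellation. It keeps cross terms of size $C|e(\Re\xi)||\Im\xi|$ in both numerator and denominator, and absorbs them with the weighted inequality $C|e||\Im\xi|\le (C^2|\Im\xi|^2/\eta)|e|^2+\eta/4$ from (\ref{K3}). The leftover additive $\eta/4$ is harmless because the real-variable bound $|e|^2/(\eta+|e|^2)$ has slack exactly $\eta/(\eta+|e|^2)$, which swallows it.

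So your remark that the crude linear bound is the only alternative to cancellation is too strong: absorbing the linear term into the $\eta$ in the denominator also works.

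\emph{What each buys.} The paper's argument is more robust, since it uses only Lipschitz bounds such as $|e(\zeta)-e(\theta)|\le C|\mu|$. Yours exploits the exact trigonometric structure and gives more: your two estimates yield $\|f(\zeta)\|\le (|e(\theta)|^2+C|\mu|^2)/(\eta+|e(\theta)|^2-C|\mu|^2)\le 1$ once $2C|\mu|^2\le\eta$.

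One cosmetic point: with $w=\overline{e(\bar\zeta)}$, the continued symbol is $vw^{T}/D(\zeta)$, not $vw^*/D(\zeta)$. This does not affect the norm $|v||w|$ you use.
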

\begin{proof}
Since the operator $A=[A_1,\dots, A_d]$ in  (\ref{A3})  has the property that the $A_j, \ j=1,\dots,d$ form  a commuting set of self-adjoint operators, it is sufficient to prove the result for $A\equiv 0$. 
In that case  $T_{\xi,\eta}$ is  for $\xi\in\mathbb{R}^d$ a Hermitian matrix operator  acting on $\mathbb{C}^d$. 
The matrix $T_{\xi,\eta}$ has entries $ T_{\xi,\eta}(j,k), \ 1\le j,k\le d,$ where
\begin{multline} \label{F3}
T_{\xi,\eta}(j,k) \ = \ \frac{e_j(\xi)e_k(\xi)^*}{\eta+e(\xi)^*e(\xi)} \ = \ \frac{ \{e^{-i\xi_j}-1\}\{e^{i\xi_k}-1\}}{\eta+ \sum_{r=1}^d \{e^{i\xi_r}-1\}\{e^{-i\xi_r}-1\} }  \\
= \ \frac{ \{e^{-i\Re\xi_j+\Im\xi_j}-1\}\{e^{i\Re\xi_k-\Im\xi_k}-1\}}{\eta+ \sum_{r=1}^d \{e^{i\Re\xi_r-\Im\xi_r}-1\}\{e^{-i\Re\xi_r+\Im\xi_r}-1\} }  \ .
\end{multline} 
The denominator in the final expression of (\ref{F3}) may be written as
\begin{multline} \label{G3}
{\rm denom} \ = \ \eta+ e(\Re\xi)^*e(\Re\xi)+ \sum_{r=1}^d \{1-e^{\Im\xi_r}\}\{1-e^{-\Im\xi_r}\} \\  
+ \sum_{r=1}^d \{1-e^{\Im\xi_r}\}e_r(\Re\xi)  +\sum_{r=1}^d \{1-e^{-\Im\xi_r}\}e_r(\Re\xi)^* \ .
\end{multline}
We write the numerator in (\ref{F3}) as 
\begin{multline} \label{H3}
{\rm num}_{j,k} \ = \ e_j(\Re\xi)e_k(\Re\xi)^*+e^{-i\Re\xi_j+i\Re\xi_k}\{e^{\Im\xi_j}-1\}\{e^{-\Im\xi_k}-1\} \\
+e^{-i\Re\xi_j}\{e^{\Im\xi_j}-1\}e_k(\Re\xi)^*+e_j(\Re\xi)e^{i\Re\xi_k}\{e^{-\Im\xi_k}-1\}  \ .
\end{multline}
Then we have that
\be \label{I3}
| {\rm denom}| \ \ge   \ \frac{\eta}{2}+\left[1-\frac{C_1|\Im\xi|^2}{\eta}\right]  |e(\Re\xi)|^2 \ , \quad {\rm for \ some \ constant \ } C_1.
\ee
From (\ref{H3}) we also have for the $d\times d$  matrix ${\rm num}_{j,k}, \ 1\le j,k\le d,$ acting on $\mathbb{C}^d$   that for some constants $C_2,C_3$ one has the inequality
\be \label{J3}
|{\rm num}  (v)| \le \ \left[   |e(\Re\xi)|^2  +C_2|\Im\xi|^2+ C_3|e(\Re\xi)||\Im\xi|                    \right]|v| \ , \quad v\in\mathbb{C}^d \ .
\ee
On using the inequality 
\be \label{K3}
C_3|e(\Re\xi)||\Im\xi|  \ \le \ \frac{C_3^2|\Im\xi|^2}{\eta}|e(\Re\xi)|^2+ \frac{\eta}{4} \ ,
\ee
in (\ref{J3}) we see that (\ref{I3}), (\ref{J3}) implies the first inequality of (\ref{E3}). For the second inequality we use instead of (\ref{K3}) the inequality 
\be \label{L3}
|e(\Re\xi)||\Im\xi|  \ \le \  \left(\frac{|\Im\xi|}{\sqrt{\eta}}\right) \sqrt{\eta} |e(\Re\xi)| \ ,
\ee 
whence the inequality follows from (\ref{I3}). 
\end{proof}
We consider operators taking periodic  functions $f(\cdot)$ on $\mathbb{R}^d$ with fundamental domain $\mathbb{T}^d$ and  range  $\mathcal{L}(\mathbb{C}^d)$, the $d\times  d$ matrices on $\mathbb{C}^d$, to periodic  functions $T_{r,\eta}f(\cdot)$ on  $\mathbb{R}^d$ with range $\mathcal{L}[L^2(\Om;\mathbb{C}^d)]$, the bounded linear operators on  $L^2(\Om;\mathbb{C}^d)$. For $r=0,1,\dots, \ \eta>0,$ the operators $S_{r,\eta},T_{r,\eta}$ are defined by
\be \label{M3}
\begin{array}{ccc}
S_{r,\eta} f(\xi) \ &=& \  f(\xi-A)\mathbf{b}(\cdot) [T_{\xi,\eta}\mathbf{b}(\cdot)]^r    \ , \quad  \xi\in\mathbb{R}^d \ , \\
T_{r,\eta} f(\xi) \ &=& \  f(\xi-A)P\mathbf{b}(\cdot) [T_{\xi,\eta}P\mathbf{b}(\cdot)]^r    \ , \quad  \xi\in\mathbb{R}^d \ . 
\end{array}
\ee
We may also consider  $S_{r,\eta},T_{r,\eta}$ as having range  $\mathcal{L}[\mathbb{C}^d;L^2(\Om;\mathbb{C}^d)]$, the linear operators from $\mathbb{C}^d$ to $L^2(\Om;\mathbb{C}^d)$. 
In that case  $S_{r,\eta}f(\xi)$ can be represented as a random $d\times d$ matrix   $S_{r,\eta}f(\xi,\om), \ \om\in\Om$, and similarly $T_{r,\eta}f(\xi)$. 
We may write $S_{r,\eta}f(\xi,\om), \ \xi\in\mathbb{T}^d,$  in configuration space representation as
\begin{multline} \label{N3}
S_{r,\eta} f(\xi,\om)  \ = \  \sum_{y\in\mathbb{Z}^d} \hat{S}_{r,\eta} f(y,\om)e^{iy\cdot\xi}  \  , \quad {\rm where} \\
\hat{S}_{r,\eta} f(y_r,\om) \ = \ \sum_{y_0,\dots, y_{r-1}\in\mathbb{Z}^d} \hat{f}(y_0)\mathbf{b}(\tau_{-y_0}\om) \prod_{j=1}^r  \na\na^*G_\eta(y_j-y_{j-1})\mathbf{b}(\tau_{-y_j}\om)  \ . 
\end{multline}
The random matrices $S_{r,\eta}f(\xi,\cdot),T_{r,\eta}f(\xi,\cdot) \in\mathcal{L}[\mathbb{C}^d;L^2(\Om;\mathbb{C}^d)]$ are related by the identity,
\begin{multline} \label{O3}
T_{r,\eta}f(\xi,\om)  \ = \ S_{r,\eta} f(\xi,\om)-\sum_{k=0}^r f(\xi-A)[\mathbf{b}(\cdot)T_{\xi,\eta}]^k(\om) \left\langle \  \mathbf{b}(\cdot)[T_{\xi,\eta} P\mathbf{b}(\cdot)]^{r-k}  \   \right\rangle         \\
= \   S_{r,\eta} f(\xi,\om)-f(\xi)\left\langle \  \mathbf{b}(\cdot)[T_{\xi,\eta} P\mathbf{b}(\cdot)]^r  \   \right\rangle \\
 -\sum_{k=1}^r S_{k-1,\eta}f(\xi,\om))\frac{e(\xi)e^*(\xi)}{\eta+e(\xi)^*e(\xi)}  \left\langle \  \mathbf{b}(\cdot)[T_{\xi,\eta} P\mathbf{b}(\cdot)]^{r-k}  \   \right\rangle  \ .
\end{multline}
For $v_1,v_2\in\mathbb{C}^d$, we have that
\begin{multline} \label{P3}
 v_1^*\left\langle \  \mathbf{b}(\cdot)\{T_{\xi,\eta} P\mathbf{b}(\cdot)\}^r  \   \right\rangle v_2 \ =  \  \Big[      \mathbf{b}(\cdot)v_1,  \       \{T_{\xi,\eta} P\mathbf{b}(\cdot)\}^r v_2               \Big]_{L^2(\Om;\mathbb{C}^d)}  \\
 = \ \Big[             \{T^*_{\xi,\eta} P\mathbf{b}(\cdot)\}^r v_1,  \mathbf{b}(\cdot)v_2,                \Big]_{L^2(\Om;\mathbb{C}^d)}  \ , \quad {\rm where \ } T^*_{\xi,\eta} \ {\rm is  \ the \  adjoint \ of \ } T_{\xi,\eta} \ .
\end{multline}
Since $T_{\xi,\eta}$  is self-adjoint we have that $ \{T^*_{\xi,\eta} P\mathbf{b}(\cdot)\}^r = T_{r-1,\eta} f(\xi)$, where $f(\cdot)$ is given by (\ref{A3}).

We shall show that the operators $S_{r,\eta}$ are bounded on various spaces and obtain formulas for the bounds. The identities (\ref{O3}), (\ref{P3}) will then enable us to obtain similar bounds for the operators $T_{r,\eta}$. 
\begin{lem}
Assume $[\xi,\eta]$ satisfy the conditions of Lemma 3.1 and that the function $f:\mathbb{T}^d\ra\mathcal{L}(\mathbb{C}^d)$ may be analytically continued to the region
  $\{\xi\in\mathbb{C}^d:  |\Im\xi|^2<c_d\eta\}$.   If $a\in\mathbb{R}^d$ satisfies $|a|^2<c_d\eta$,  the  following inequalities hold:
\be \label{Q3}
\sup_{\xi\in \mathbb{T}^d, \ v\in\mathbb{C}^d:|v|=1} \left\|        S_{r,\eta} f(\xi+ai)v             \right\|_{L^2(\Om;\mathbb{C}^d)}  \ \le \ (1-\la)^{r+1}\left[     1+\frac{C_d|a|^2}{\eta}     \right]^r \sup_{\xi\in\mathbb{T}^d, \ v\in\mathbb{C}^d:|v|=1}|f(\xi+ai)v| \  , 
\ee
\begin{multline} \label{R3}
 \left[\frac{1}{(2\pi)^d} \int_{\mathbb{T}^d} d\xi \  \left\|       v^* S_{r,\eta} f(\xi+ia)            \right\|^2_{L^2(\Om;\mathbb{C}^d)}  \ \right]^{1/2} \\
 \le \ (1-\la)^{r+1}\left[     1+\frac{C_d|a|^2}{\eta}     \right]^r    \left[  \frac{1}{(2\pi)^d}\int_{\mathbb{T}^d} d\xi   \ |v^*f(\xi+ia)|^2  \ \right]^{1/2}  \ , \quad v\in\mathbb{C}^d \ .
\end{multline} 
\end{lem}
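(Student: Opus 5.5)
The plan is to bound both inequalities by a product of operator norms, one factor for each operator in the definition (\ref{M3}) of $S_{r,\eta}f(\xi+ia)$. We use $\|\mathbf{b}(\cdot)\|\le 1-\la$, which follows from $0\le \mathbf{b}(\cdot)\le(1-\la)I_d$. We use the first inequality of (\ref{E3}) in Lemma 3.1, which bounds $\|T_{\xi+ia,\eta}\|$ by $1+C_d|a|^2/\eta$ uniformly in $\xi\in\mathbb{T}^d$ when $|a|^2<c_d\eta$. The factor $f(\xi+ia-A)$ needs a separate argument, given below. Since $\mathbf{b}$ occurs $r+1$ times and $T_{\xi+ia,\eta}$ occurs $r$ times in $S_{r,\eta}f$, this produces the constant $(1-\la)^{r+1}[1+C_d|a|^2/\eta]^r$ in (\ref{Q3}) and (\ref{R3}).

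For the factor $f(\xi+ia-A)$, the key observation is that $A$ annihilates constant functions, because $\tau_x c=c$ for every constant $c$. So when $f(\cdot-A)$ acts on a constant vector $w\in\mathbb{C}^d\subset L^2(\Om;\mathbb{C}^d)$, the spectral measure of $w$ with respect to the commuting self-adjoint family $A$ is the point mass at $0$. Hence $f(\xi+ia-A)w=f(\xi+ia)w$, and likewise for adjoints. On all of $L^2(\Om;\mathbb{C}^d)$ we also have the spectral bound $\|f(\xi+ia-A)\|\le\sup_{\zeta\in\mathbb{T}^d}\|f(\zeta+ia)\|$, using periodicity of $f$ and joint spectral calculus for $A=[A_1,\dots,A_d]$. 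This calculus is legitimate for the analytic continuation in the same way as in the proof of Lemma 3.1, where the operator was reduced to the case $A\equiv0$.

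For (\ref{Q3}) we read $S_{r,\eta}f(\xi+ai)v$ as the operators in (\ref{M3}) applied right to left to the constant $v$. Each factor of $\mathbf{b}$ and $T_{\xi+ia,\eta}$ is bounded in operator norm as above. The leftmost factor $f(\xi+ia-A)$ acts on a non-constant function, so here we use the spectral sup bound. This gives (\ref{Q3}) after taking the supremum over $\xi$ and $|v|=1$.

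For (\ref{R3}), the point is to get the pointwise factor $|v^*f(\xi+ia)|$ rather than a supremum over $\xi$. We identify $v^*S_{r,\eta}f(\xi+ia)$ with its adjoint acting on $v$, namely $[\mathbf{b}(\cdot)T^*_{\xi+ia,\eta}]^r\mathbf{b}(\cdot)f(\xi+ia-A)^*v$, in the spirit of (\ref{P3}). Now the $f$-factor acts first, on the constant $v$, so it equals the constant vector $f(\xi+ia)^*v$, whose norm is $|v^*f(\xi+ia)|$. The remaining factors are bounded as before, since $\|T^*\|=\|T\|$. This gives, for every $\xi\in\mathbb{T}^d$,
\[
\|v^*S_{r,\eta}f(\xi+ia)\|_{L^2(\Om;\mathbb{C}^d)}\le(1-\la)^{r+1}\Big[1+\frac{C_d|a|^2}{\eta}\Big]^r|v^*f(\xi+ia)| ,
\]
and squaring and integrating over $\mathbb{T}^d$ yields (\ref{R3}). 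The main point needing care is this adjoint and ordering bookkeeping: we must check that the factor $f(\xi+ia-A)$ really sits next to the constant vector in the representation used. The fallback, if it did not, is a unitary transform $g(\xi,\om)\mapsto\sum_x\hat g(x,\tau_x\om)e^{ix\cdot\xi}$ on $L^2(\mathbb{T}^d\times\Om)$ that converts $f(\xi-A)$ into multiplication by $f(\xi)$.
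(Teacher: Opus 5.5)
Your argument for (\ref{Q3}) is fine and matches the paper's: Lemma 3.1, $\|\mathbf{b}\|\le 1-\la$, and the spectral bound on $f(\xi+ia-A)$. The gap is in (\ref{R3}). You identify $v^*S_{r,\eta}f(\xi+ia)$ with $S_{r,\eta}f(\xi+ia)^*v$, the Hilbert-space adjoint applied to the constant $v$. The quantity in (\ref{R3}) is something else: it is the $L^2(\Om;\mathbb{C}^d)$ norm of the random row vector $\om\mapsto v^*S_{r,\eta}f(\xi+ia,\om)$, i.e.\ of $\om\mapsto S_{r,\eta}f(\xi+ia,\om)^*v$ with the conjugate transpose taken pointwise in $\om$. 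This is exactly what (\ref{S3}) encodes.

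These two functions of $\om$ are different and have different norms. Identity (\ref{P3}) relates them only after pairing with a constant vector and taking the expectation. The translations $\tau_{-y}$ inside $f(\xi-A)$ act on the random factors $\mathbf{b}(\tau_{-y_0}\om)$, and a pointwise transpose does not move them.

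As a result, the pointwise-in-$\xi$ inequality you derive is false. Take $r=0$, $a=0$, $f(\zeta)=(1-e^{i\zeta_1})I_d$ and $\xi=0$. By (\ref{N3}), $v^*S_{0,\eta}f(0,\om)=v^*[\mathbf{b}(\om)-\mathbf{b}(\tau_{-e_1}\om)]$. This is nonzero in $L^2(\Om)$ for, e.g., a non-constant i.i.d.\ environment, whereas $f(0)=0$. Cancellation in $f(\xi)=\sum_y\hat f(y)e^{iy\cdot\xi}$ does not survive once the coefficients are weighted by different random matrices $\mathbf{b}(\tau_{-y}\om)$, so (\ref{R3}) can only hold after integrating in $\xi$. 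As a sanity check: a pointwise bound would give Hilbert--Schmidt bounds for every $\xi$ and hence every $L^p$, whereas the paper needs Riesz--Thorin and only gets $2\le p\le\infty$ in (\ref{Z3}).

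The argument you mention only as a fallback has to be the main argument. Use Parseval in $\xi$ to rewrite the left side of (\ref{R3}) as $\int_\Om d\om\sum_{y}|e^{-a\cdot y}v^*\hat S_{r,\eta}f(y,\om)|^2$. Now fix $\om$. The function $y_r\mapsto e^{-a\cdot y_r}v^*\hat S_{r,\eta}f(y_r,\om)$ is obtained from the row vector $y_0\mapsto e^{-a\cdot y_0}v^*\hat f(y_0)$ by alternating two operations on $\ell^2(\mathbb{Z}^d)$:
\begin{itemize}
\item right multiplication by $\mathbf{b}(\tau_{-y}\om)$, which has norm at most $1-\la$;
\item convolution with $z\mapsto e^{-a\cdot z}\na\na^*G_\eta(z)$, whose Fourier multiplier is the matrix $T_{\xi+ia,\eta}$ with $A\equiv 0$, of norm at most $1+C_d|a|^2/\eta$ by Lemma 3.1.
\end{itemize}
This gives (\ref{T3}). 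Parseval back to $\|v^*f(\cdot+ia)\|_{L^2(\mathbb{T}^d)}$ and integration over $\om$ then give (\ref{R3}). Your unitary transform would implement the same estimate, but only as a bound on $L^2(\mathbb{T}^d\times\Om)$ norms, never pointwise in $\xi$.
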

\begin{proof}
The inequality (\ref{Q3}) follows immediately from Lemma 3.1, by using the bound on $\mathbf{b}(\cdot)$.  To prove (\ref{R3}) we use the method of proof in Lemma 3.4 of \cite{cn2}.   The main point is that the integral on the  LHS of (\ref{R3}) is the same as
\be \label{S3}
\int_\Om d\om\  \sum_{y_r\in\mathbb{Z}^d}\left| \ e^{-a\cdot y_r} v^*\hat{S}_{r,\eta} f(y_r,\om) \ \right|^2 \ ,
\ee
 where $\hat{S}_{r,\eta}$ is defined in (\ref{N3}). 
 We see using the formula for $ \hat{S}_{r,\eta}$ in  (\ref{N3}) that
\begin{multline} \label{T3}
 \left[ \sum_{y_r\in\mathbb{Z}^d} \left| e^{-a\cdot y_r} v^*\hat{S}_{r,\eta} f(y_r,\om)\right|^2 \right]^{1/2} \ \le \\
 (1-\la)^{r+1}\left\{     1+\frac{C_d|a|^2}{\eta}     \right\}^r \left[\sum_{y\in\mathbb{Z}^d} \left|v^*\hat{f}(y)e^{-a\cdot y}\right|^2\right]^{1/2} \\
  = \  (1-\la)^{r+1}\left\{     1+\frac{C_d|a|^2}{\eta}     \right\}^r \ \left[\frac{1}{(2\pi)^d}\int_{\mathbb{T}^d} d\xi   \  |v^*f(\xi+ia)|^2\right]^{1/2} \ .
\end{multline}
The inequality (\ref{R3}) follows from (\ref{S3}), (\ref{T3}). 
\end{proof}
\begin{lem}
Assume $[\xi,\eta]$ satisfy the conditions of Lemma 3.1 and that the function $f:\mathbb{T}^d\ra\mathcal{L}(\mathbb{C}^d)$ may be analytically continued to the region
  $\{\xi\in\mathbb{C}^d:  |\Im\xi|^2<c_d\eta\}$.   If $a\in\mathbb{R}^d$ satisfies $|a|^2<c_d\eta$,  the  following inequalities hold:
\be \label{U3}
\sup_{\xi\in \mathbb{T}^d, \ v\in\mathbb{C}^d:|v|=1} \left\|        T_{r,\eta} f(\xi+ai)v             \right\|_{L^2(\Om;\mathbb{C}^d)}  \ \le \ (1-\la)^{r+1}\left[     1+\frac{C_d|a|^2}{\eta}     \right]^r \sup_{\xi\in\mathbb{T}^d, \ v\in\mathbb{C}^d:|v|=1}|f(\xi+ai)v| \  , 
\ee
\begin{multline} \label{V3}
 \left[\frac{1}{(2\pi)^d} \int_{\mathbb{T}^d} d\xi \  \left\|      v^*  T_{r,\eta} f(\xi+ia)            \right\|^2_{L^2(\Om;\mathbb{C}^d)}  \ \right]^{1/2} \\
 \le \ (r+2)(1-\la)^{r+1}\left[     1+\frac{C_d|a|^2}{\eta}     \right]^r    \left[  \frac{1}{(2\pi)^d}\int_{\mathbb{T}^d} d\xi   |v^*f(\xi+ia)|^2   \ \right]^{1/2}  \ , \quad v\in\mathbb{C}^d \ .
\end{multline} 
\end{lem}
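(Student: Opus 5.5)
The plan is to treat the two inequalities separately. For (\ref{U3}), $T_{r,\eta}f(\xi+ai)$ is built from $f(\xi+ai-A)$, then $P\mathbf{b}(\cdot)$, then $r$ factors $T_{\xi+ai,\eta}P\mathbf{b}(\cdot)$. The bound follows directly as in (\ref{Q3}). We have $\|P\|\le 1$ and $\|\mathbf{b}\|\le 1-\la$, and Lemma 3.1 gives $\|T_{\xi+ai,\eta}\|\le 1+C_d|a|^2/\eta$. For the first factor we use that $A$ consists of commuting self-adjoint operators, so $\|f(\xi+ai-A)v\|\le\sup_{\xi'}|f(\xi'+ai)v|$ by the spectral theorem. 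Multiplying these norms gives (\ref{U3}) with no extra factor.

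For (\ref{V3}) the $L^2(\mathbb{T}^d)$ estimate (\ref{R3}) does not carry over to $T_{r,\eta}$ by the same configuration-space argument, because $P$ is nonlocal in $\om$. Instead we use the identity (\ref{O3}), analytically continued to $\xi+ia$, which writes $T_{r,\eta}f$ as $S_{r,\eta}f$ minus $r+1$ correction terms. The $S_{r,\eta}f$ term is bounded by (\ref{R3}) directly. The $k=0$ correction is $f(\xi+ia)$ times the scalar matrix $\langle \mathbf{b}[T_{\xi+ia,\eta}P\mathbf{b}]^r\rangle$, whose operator norm is at most $(1-\la)^{r+1}[1+C_d|a|^2/\eta]^r$ by the same operator-norm argument as for (\ref{U3}). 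Its $L^2(d\xi)$ norm after multiplying by $v^*$ is therefore controlled by the RHS of (\ref{V3}). This accounts for two of the $r+2$ units in the constant.

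For $1\le k\le r$ the correction term is $S_{k-1,\eta}f(\xi+ia,\om)$ multiplied on the right by $\frac{e e^*}{\eta+e^*e}(\xi+ia)$ and then by $\langle \mathbf{b}[T_{\xi+ia,\eta}P\mathbf{b}]^{r-k}\rangle$. The last two factors are deterministic matrices, but they multiply on the right, whereas (\ref{R3}) controls $v^*S$, i.e.\ left multiplication. This is the step I expect to be the main obstacle. Rather than commute the matrices, I will keep the $\mathbb{C}^d$ row vector $v^*$ on the left and bound the right matrix factor in operator norm pointwise in $\xi$: $\|v^*S_{k-1,\eta}f(\xi+ia)M(\xi)\|\le\|v^*S_{k-1,\eta}f(\xi+ia)\|\,\|M(\xi)\|$. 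By Lemma 3.1 the norm of $\frac{ee^*}{\eta+e^*e}(\xi+ia)$, which is the $A=0$ case of $T_{\xi+ia,\eta}$, is at most $1+C_d|a|^2/\eta$. The expectation factor has norm at most $(1-\la)^{r-k+1}[1+C_d|a|^2/\eta]^{r-k}$ by the same operator-norm reasoning used for (\ref{U3}) (note that (\ref{P3}) is not needed for this, only the norm bound).

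Squaring, integrating over $\xi$, and applying (\ref{R3}) to $S_{k-1,\eta}$ then gives for each such term the bound $(1-\la)^{k}[1+C_d|a|^2/\eta]^{k-1}\cdot(1+C_d|a|^2/\eta)\cdot(1-\la)^{r-k+1}[1+C_d|a|^2/\eta]^{r-k}$ times the $L^2$ norm of $v^*f$. This equals exactly the RHS factor without the $(r+2)$. The triangle inequality over the $r+2$ pieces ($S_{r,\eta}$, the $k=0$ term, and the $r$ terms with $k\ge1$) then yields (\ref{V3}).

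We also need analyticity to justify (\ref{O3}) at complex $\xi$. Both sides of (\ref{O3}) are analytic in $\xi$ on the region of Lemma 3.1, since $f$, $T_{\xi,\eta}$ and $e(\xi)$ continue analytically there. The identity is algebraic, coming from $P=I-\langle\cdot\rangle$ expanded through the product, so it persists by analytic continuation.
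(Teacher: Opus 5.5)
Your proposal is correct and follows the paper's proof: you obtain (\ref{U3}) by multiplying operator norms, and (\ref{V3}) by splitting $T_{r,\eta}f$ via (\ref{O3}) into $r+2$ terms, bounding the right-hand deterministic factors $\frac{e e^*}{\eta+e^*e}(\xi+ia)$ and $\langle \mathbf{b}[T_{\xi+ia,\eta}P\mathbf{b}]^{r-k}\rangle$ pointwise in operator norm as in (\ref{W3}), and applying (\ref{R3}) to $v^*S_{k-1,\eta}f$. The only cosmetic difference is that you bound the expectation matrix directly by the operator norm, whereas the paper goes through (\ref{P3}) and $T^*_{\xi+ai,\eta}=T_{\xi-ai,\eta}$; both amount to the same Schwarz-inequality estimate.
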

\begin{proof}
The inequality (\ref{U3}) follows immediately from Lemma 3.1, by using the bound on $\mathbf{b}(\cdot)$ and the fact that the projection operator $P$ is a contraction on $L^2(\Om;\mathbb{C}^d)$.  
To prove (\ref{V3}) we use the identities (\ref{O3}), (\ref{P3}) and Lemma 3.2. It is sufficient to show that (\ref{R3}) holds when  $ S_{r,\eta} f(\xi+ia)$ is replaced by each of the $r+2$ terms on the RHS of (\ref{O3}). To see this we note that
$T^*_{\xi+ai,\eta}=T_{\xi-ai,\eta}$.  Hence we have from (\ref{P3})  that
\begin{multline} \label{W3}
\left|v^*S_{k-1,\eta}f(\xi+ai,\om))\frac{e(\xi+ai)e^*(\xi+ai)}{\eta+e(\xi+ai)^*e(\xi+ai)}  \left\langle \  \mathbf{b}(\cdot)[T_{\xi+ai,\eta} P\mathbf{b}(\cdot)]^{r-k} \  \right\rangle   \right| \\
 \le \  (1-\la)^{r-k+1}\left[     1+\frac{C_d|a|^2}{\eta}     \right]^{r-k+1} \left|v^*S_{k-1,\eta}f(\xi+ai,\om))\right| \ .
\end{multline}
We then obtain the inequality (\ref{R3})  for the $(k+2)$th term on the RHS of (\ref{O3}) from (\ref{W3}) and Lemma 3.2.
\end{proof}
For a matrix $A\in\mathcal{L}(\mathbb{C}^d)$, its Hilbert-Schmidt norm $\|A\|_{ \rm HS}$ is given by the formula  $\|A\|_{ \rm HS}^2={\rm Tr}\{A^*A\}$.  Similarly we may define the Hilbert-Schmidt norm of an operator 
$\mathcal{A}\in\mathcal{L}[\mathbb{C}^d;L^2(\Om;\mathbb{C}^d)]$, the linear mappings from  $\mathbb{C}^d$ to $L^2(\Om;\mathbb{C}^d)$, 
by  $\|\mathcal{A}(\cdot)\|_{ \rm HS}^2=\left\langle \ {\rm Tr}\{\mathcal{A}(\cdot)^*\mathcal{A(\cdot)}\} \ \right\rangle$. We can using these norms define  $L^p$ spaces of functions with domain $\mathbb{T}^d$ and ranges 
$\mathcal{L}(\mathbb{C}^d)$ and $\mathcal{L}(\mathbb{C}^d;L^2(\Om;\mathbb{C}^d))$. Thus we set
\be \label{X3}
\begin{array}{ccc}
\|A\|_p^p  \ &=& \ \frac{1}{(2\pi)^{d}}\int_{\mathbb{T}^d} \|A(\xi\|_{\rm HS}^p \ d\xi \ , \quad {\rm for} \  A:\mathbb{T}^d\ra \mathcal{L}(\mathbb{C}^d) \ , \\
\|\mathcal{A}\|_p^p  \ &=& \ \frac{1}{(2\pi)^{d}}\int_{\mathbb{T}^d} \|\mathcal{A}(\xi\|_{\rm HS}^p \ d\xi \ , \quad {\rm for} \  \mathcal{A}:\mathbb{T}^d\ra  \mathcal{L}(\mathbb{C}^d;L^2(\Om;\mathbb{C}^d)) \ .
\end{array}
\ee
Lemma 3.2 may be stated in terms of the norms (\ref{X3}) as
\be \label{Y3}
\begin{array}{ccc}
\left\|S_{r,\eta} f(\cdot+ai)\right\|_\infty \ &\le& \ \sqrt{d}(1-\la)^{r+1}\left[     1+\frac{C_d|a|^2}{\eta}     \right]^r  \|f(\cdot+ai)\|_\infty \ , \\
\left\|S_{r,\eta} f(\cdot+ai)\right\|_2 \ &\le& \ (1-\la)^{r+1}\left[     1+\frac{C_d|a|^2}{\eta}     \right]^r  \|f(\cdot+ai)\|_2 \ .
\end{array}
\ee
It follows from (\ref{Y3}) and the Riesz-Thorin interpolation theorem  \cite{rs,sw} that
\be \label{Z3}
\left\|S_{r,\eta} f(\cdot+ai)\right\|_p \ \le \ d^{1/2-1/p}(1-\la)^{r+1}\left[     1+\frac{C_d|a|^2}{\eta}     \right]^r  \|f(\cdot+ai)\|_p \ , \quad {\rm for \ } 2\le p\le \infty \ .
\ee

We have shown that  $S_{r,\eta}$ is a bounded operator from $L^p(\mathbb{T}^d;  \mathcal{L}(\mathbb{C}^d) )$ to  $L^p(\mathbb{T}^d;  \mathcal{L}(\mathbb{C}^d; L^2(\Om;\mathbb{C}^d))$  with norm  not more than  the coefficient on the RHS of (\ref{Z3}). 
We may also define weak $L^p_w$ spaces using the norms (\ref{X3}).  Thus $A(\cdot)$ is in $L^p_w( \mathbb{T}^d; \mathcal{L}(\mathbb{C}^d))$ if there is a constant $C$ such that
\be \label{AA3}
{\rm meas}\{\xi\in\mathbb{T}^d:  \|A(\xi)\|_{\rm HS}>\mu\} \ \le \ \frac{C^p}{\mu^p} \ , \quad \mu>0 \ .
\ee
The weak norm $\|A(\cdot)\|_{p,w}$ is the minimum $C$ for which (\ref{AA3}) holds. We may similarly define the space  $L_w^p(\mathbb{T}^d;  \mathcal{L}(\mathbb{C}^d; L^2(\Om;\mathbb{C}^d))$.  From (\ref{AA3})  and Hunt's interpolation theorem \cite{rs,sw}, it follows that   when $2<p<\infty$  the operator $S_{r,\eta}$  from $L_w^p(\mathbb{T}^d;  \mathcal{L}(\mathbb{C}^d) )$ to  $L_w^p(\mathbb{T}^d;  \mathcal{L}(\mathbb{C}^d; L^2(\Om;\mathbb{C}^d))$  is bounded.  
\begin{lem} {\rm (Hunt interpolation theorem)}
Let $(\mathcal{M},\mu)$ be a measure space and $ \mathcal{H}_1, \mathcal{H}_2$   Hilbert spaces.  Suppose that  $p_1,p_2,q_1,q_2\in\mathbb{R}$ satisfy $1\le p_1<p_2\le\infty, \ 1\le q_1< q_2\le \infty$, and  $T$ is a  bounded linear operator  from
$L^{p}(\mathcal{M};\mathcal{H}_1)$  to  $L^{q}(\mathcal{M};\mathcal{H}_2)$ when $p=p_1, q=q_1$ and $p=p_2, q=q_2$.   Hence there are constants $C_1,C_2$ such that 
\be \label{AB3}
\|Tf\|_{q_1}\le C_1\|f\|_{p_1} \ , \quad \|Tf\|_{q_2}\le C_2\|f\|_{p_2}  \ , \quad {\rm all \ } f:\mathcal{M}\ra\mathcal{H}_1 \ .
\ee
Then for any $p,q$ satisfying the identities
\be \label{AC3}
\frac{1}{p} \ = \ \frac{\theta}{p_1}+\frac{1-\theta}{p_2} \ ,  \quad \frac{1}{q} \ = \ \frac{\theta}{q_1}+\frac{1-\theta}{q_2} \ ,  \quad {\rm with \ } 0<\theta<1 \ ,
\ee
one has the inequality
\be \label{AD3}
\|Tf\|_{q,w}\le M C_1^\theta C_2^{1-\theta}\|f\|_{p,w} \ \ , \quad {\rm where \ } M \ {\rm depends \ only \ on \ } p_1,p_2,q_1,q_2,\theta \ .
\ee
\end{lem}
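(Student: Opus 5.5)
The plan is to follow the standard real-method proof of Marcinkiewicz–Hunt interpolation, adapted to Hilbert-space-valued functions. Since only $\|\cdot\|_{\mathcal{H}_1}$ and $\|\cdot\|_{\mathcal{H}_2}$ enter the distribution functions, the scalar argument carries over with norms in place of absolute values. Fix $f\in L^p_w(\mathcal{M};\mathcal{H}_1)$, normalized so that $\|f\|_{p,w}=1$, and fix a level $\mu>0$. We must show
\[
\mu\{x: \|Tf(x)\|_{\mathcal{H}_2}>\mu\}\le (MC_1^\theta C_2^{1-\theta})^q\mu^{-q}.
\]
For a height $t>0$ chosen in terms of $\mu$, split $f=f_1+f_2$ with
\[
f_1=f\,\mathbbm{1}_{\{\|f\|>t\}}, \qquad f_2=f\,\mathbbm{1}_{\{\|f\|\le t\}}.
\]
This truncation acts on the norm only, so it preserves the vector structure. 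Because $p_1<p<p_2$, the layer-cake formula together with the weak bound $\mu\{\|f\|>s\}\le s^{-p}$ gives
\[
\|f_1\|_{p_1}^{p_1}\le C t^{p_1-p}, \qquad \|f_2\|_{p_2}^{p_2}\le C t^{p_2-p}.
\]
If $p_2=\infty$, the second bound is simply $\|f_2\|_\infty\le t$.

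Next use linearity, $Tf=Tf_1+Tf_2$, so that $\{\|Tf\|>\mu\}\subset\{\|Tf_1\|>\mu/2\}\cup\{\|Tf_2\|>\mu/2\}$. Apply Chebyshev's inequality with the two strong bounds in (\ref{AB3}):
\[
\mu\{\|Tf_1\|>\mu/2\}\le (2C_1\|f_1\|_{p_1}/\mu)^{q_1}, \qquad \mu\{\|Tf_2\|>\mu/2\}\le (2C_2\|f_2\|_{p_2}/\mu)^{q_2}.
\]
When $q_2=\infty$, take $t$ small enough that $C_2\|f_2\|_{\infty}<\mu/2$; the second set is then empty. In the other case, choose $t$ as a power of $\mu/(C_1^\theta C_2^{1-\theta})$, namely $t^{p}\propto (\mu/C_1^\theta C_2^{1-\theta})^{q}$ up to constants involving $C_1/C_2$. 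With this choice both terms become $\lesssim (C_1^\theta C_2^{1-\theta})^q\mu^{-q}$. The exponent bookkeeping closes because of the relations (\ref{AC3}) with the same $\theta$ for the $p$'s and the $q$'s. The constant $M$ collects the layer-cake constants $[p_1/(p-p_1)]$, $[p_2/(p_2-p)]$ and powers of $2$, so it depends only on $p_1,p_2,q_1,q_2,\theta$. Finally, undo the normalization by homogeneity in $f$.

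The main obstacle is the exponent matching in the choice of $t$. Making both Chebyshev terms scale like $\mu^{-q}$ with the geometric mean $C_1^\theta C_2^{1-\theta}$ requires the standard substitution $t=(\mu/\gamma)^{\sigma}$, with $\sigma=\frac{q}{p}$ type exponents and $\gamma$ a suitable ratio of $C_1$ and $C_2$. Getting this right also forces $q\ge p$ type conditions, and the hypotheses here implicitly allow this as in \cite{sw}. If the bookkeeping becomes cumbersome, the fallback is to cite the scalar Hunt theorem in \cite{sw} (Chapter V, Theorem 3.15). Its proof uses only the sublinearity of $f\mapsto\|Tf(\cdot)\|_{\mathcal{H}_2}$ and the distribution function of $\|f(\cdot)\|_{\mathcal{H}_1}$, and both of these are available in the Hilbert-space-valued setting. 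That lets one conclude (\ref{AD3}) directly.
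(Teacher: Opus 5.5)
The paper gives no proof of this lemma: it is quoted from \cite{rs,sw}, and the passage from scalar to Hilbert-space-valued functions is taken for granted. Your fallback is therefore the paper's approach. Your main argument is a genuinely different, self-contained route: one truncation of $f$ at a height $t$ tied to the level $\mu$, then Chebyshev with the two endpoint bounds. After the two corrections below it is correct, and it gives two things the citation leaves implicit. First, the vector-valued case is visibly harmless, since the truncation only sees $\|f\|_{\mathcal{H}_1}$. Second, the Chebyshev step only uses ${\rm meas}\{\|Tg\|_{\mathcal{H}_2}>s\}\le (C_j\|g\|_{p_j}/s)^{q_j}$, so your argument proves (\ref{AD3}) already from the weak-type endpoint bounds (\ref{AE3}). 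That would let one skip the Marcinkiewicz step in the proof of Lemma 3.8.

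The first correction concerns your choice of $t$. Taking $t^p\propto(\mu/\gamma)^q$ with $\gamma=C_1^\theta C_2^{1-\theta}$ is wrong unless $1/q_1-1/q_2=1/p_1-1/p_2$. With $\|f\|_{p,w}=1$, the two Chebyshev terms are, up to constants depending only on the exponents, $(C_1t^{1-p/p_1}/\mu)^{q_1}$ and $(C_2t^{1-p/p_2}/\mu)^{q_2}$. The correct choice is
\[
t=\left(\frac{\mu}{\gamma}\right)^{q\sigma/p}\left(\frac{C_1}{C_2}\right)^{1/(p(1/p_1-1/p_2))} , \qquad \sigma=\frac{1/q_1-1/q_2}{1/p_1-1/p_2} .
\]
Use $1/p_1-1/p=(1-\theta)(1/p_1-1/p_2)$ and $1/p-1/p_2=\theta(1/p_1-1/p_2)$, together with the same identities for the $q$'s. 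One then finds $C_jt^{1-p/p_j}/\mu=(\gamma/\mu)^{q/q_j}$ exactly for $j=1,2$, so both terms are $(\gamma/\mu)^q$.

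If $q_2=\infty$, this identity reads $C_2t^{1-p/p_2}=\mu$. After shrinking $t$ by a constant, the set $\{\|Tf_2\|>\mu/2\}$ is null. There you must use $\|f_2\|_{p_2}\le (p_2/(p_2-p))^{1/p_2}t^{1-p/p_2}$ rather than $\|f_2\|_\infty\le t$, unless $p_2=\infty$.

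The second correction: no condition $q\ge p$ arises. The computation closes for every $p,q$ given by (\ref{AC3}); the restriction $p\le q$ belongs only to the strong-type (Marcinkiewicz) conclusion. This matters, because the hypotheses do not force $q\ge p$, and the paper actually uses the lemma with $q<p$. In the $k=1$ step of the proof of Lemma 3.8 (see (\ref{BF3})), the endpoint pairs are $((1/2-\alpha)^{-1},2)$ and $(\infty,1/\alpha)$, which give $1/q-1/p=\alpha>0$.
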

It follows from (\ref{Y3}) and  Lemma  3.4 that
\begin{multline} \label{AK3}
\left\|S_{r,\eta} f(\cdot+ai)\right\|_{p,w} \ \le \ Md^{1/2-1/p}(1-\la)^{r+1}\left[     1+\frac{C_d|a|^2}{\eta}     \right]^r  \|f(\cdot+ai)\|_{p,w} \ , \quad {\rm for \ } 2< p< \infty \ , \\
 {\rm where \ } M\ {\rm depends \  only\   on \ } p \ .
\end{multline}
 We shall also obtain  multilinear versions of (\ref{Z3}), (\ref{AK3}). To prove these, we need in addition to the Riesz-Thorin and Hunt interpolation theorems, the Marcinkiewicz interpolation theorem:
 \begin{lem} {\rm (Marcinkiewicz interpolation theorem)}
Let $(\mathcal{M},\mu)$ be a measure space and $ \mathcal{H}_1, \mathcal{H}_2$   Hilbert spaces.  Suppose that  $p_1,p_2,q_1,q_2\in\mathbb{R}$ satisfy $1\le p_1<p_2\le\infty, \ 1\le q_1< q_2\le \infty$, and  $T$ is a  bounded linear operator  from
$L^{p}(\mathcal{M};\mathcal{H}_1)$  to  $L^{q,w}(\mathcal{M};\mathcal{H}_2)$ when $p=p_1, q=q_1$ and $p=p_2, q=q_2$.   Hence there are constants $C_1,C_2$ such that 
\be \label{AE3}
\|Tf\|_{q_1,w}\le C_1\|f\|_{p_1} \ , \quad \|Tf\|_{q_2,w}\le C_2\|f\|_{p_2}  \ , \quad {\rm all \ } f:\mathcal{M}\ra\mathcal{H}_1 \ .
\ee
Then for any $p,q$ satisfying the identities
\be \label{AF3}
\frac{1}{p} \ = \ \frac{\theta}{p_1}+\frac{1-\theta}{p_2} \ ,  \quad \frac{1}{q} \ = \ \frac{\theta}{q_1}+\frac{1-\theta}{q_2} \ ,  \quad {\rm with \ } 0<\theta<1 \ ,
\ee
one has the inequality
\be \label{AG3}
\|Tf\|_{q}\le M C_1^\theta C_2^{1-\theta}\|f\|_{p} \ \ , \quad {\rm where \ } M \ {\rm depends \ only \ on \ } p_1,p_2,q_1,q_2,\theta \ .
\ee
\end{lem}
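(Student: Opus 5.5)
This is the classical Marcinkiewicz interpolation theorem, here in a vector-valued form, so the plan is the standard real-variable argument. Since $\mathcal{H}_1,\mathcal{H}_2$ are Hilbert spaces, only the norms $|f(\cdot)|_{\mathcal{H}_1}$ and $|Tf(\cdot)|_{\mathcal{H}_2}$ enter, and the scalar proof goes through essentially verbatim. I assume $p\le q$ (as in the diagonal applications $p_j=q_j$ intended here). The linearity of $T$, or even sublinearity $|T(f+g)|\le|Tf|+|Tg|$, suffices.

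\textbf{Step 1 (reductions).} By homogeneity I may normalize $\|f\|_p=1$. I reparametrize the line $1/p=\theta/p_1+(1-\theta)/p_2$, $1/q=\theta/q_1+(1-\theta)/q_2$ by writing $q=q(p)$ as the corresponding affine function of $1/p$. I treat $q_2<\infty$ first; the case $q_2=\infty$ is handled at the end of Step 3.

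\textbf{Step 2 (splitting).} For each height $\mu>0$ I split $f=f_1^\mu+f_2^\mu$, where $f_1^\mu=f\,\mathbbm{1}_{\{|f|>c\mu^{\sigma}\}}$ and $f_2^\mu=f-f_1^\mu$. The exponent $\sigma=\frac{q(1/p_1-1/p_2)}{p(1/q_1-1/q_2)}\cdot\frac{p}{q}$ is chosen so that the contributions balance. Then $f_1^\mu\in L^{p_1}$ and $f_2^\mu\in L^{p_2}$, and
\[
\mu_{Tf}(2\mu)\le \mu_{Tf_1^\mu}(\mu)+\mu_{Tf_2^\mu}(\mu)\le C_1^{q_1}\mu^{-q_1}\|f_1^\mu\|_{p_1}^{q_1}+C_2^{q_2}\mu^{-q_2}\|f_2^\mu\|_{p_2}^{q_2}.
\]

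\textbf{Step 3 (integrating).} I write $\|Tf\|_q^q=q2^q\int_0^\infty\mu^{q-1}\mu_{Tf}(2\mu)\,d\mu$ and insert the bound from Step 2. Expressing $\|f_1^\mu\|_{p_1}^{p_1}$ and $\|f_2^\mu\|_{p_2}^{p_2}$ through the distribution function of $|f|$, I get terms of the form
\[
\int_0^\infty\mu^{q-q_j-1}\Big(\int_{\{|f|\gtrless c\mu^\sigma\}}|f|^{p_j}\Big)^{q_j/p_j}d\mu .
\]
Because $q_j/p_j\ge1$ may occur, I will use Minkowski's integral inequality (or, since $q\ge p$, first prove the bound into $L^{q,p}$ Lorentz space and embed) to exchange the order of integration. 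The inner integral in $\mu$ then reduces, by the choice of $\sigma$, to a constant times $|f(x)|^{p}$, with geometric factors $1/|q-q_j|$. This yields $\|Tf\|_q\le M$. The scaling $f\to f/t$ together with optimization over $c$ then produces the factor $C_1^\theta C_2^{1-\theta}$. For $q_2=\infty$, I choose $c$ so that $|Tf_2^\mu|\le\mu$ a.e., which kills the second term, and the rest is identical.

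\textbf{Main obstacle.} The only delicate point is Step 3 when $q_j\ne p_j$, that is, the off-diagonal case: the interchange of integrals requires $p\le q$, and the exponent bookkeeping for $\sigma$ must make both $\mu$-integrals converge at $0$ and $\infty$. I would first do the diagonal case $p_j=q_j$, where Fubini applies directly and the computation is elementary. I would then cite \cite{sw} for the general case, since this is exactly the Stein--Weiss form of Marcinkiewicz interpolation and the Hilbert-space values change nothing.
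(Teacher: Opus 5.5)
The paper gives no proof of this lemma; it simply invokes the classical Marcinkiewicz theorem (cf.\ \cite{rs,sw}). Your plan, an elementary argument on the diagonal and Stein--Weiss off it, is therefore in substance the paper's route.

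\textbf{Your extra hypothesis is necessary.} The assumption $p\le q$ is not a convenience: the lemma as printed is false without it. On $(0,\infty)$ take $Tf(x)=x^{-1/r}f(x)$. The bound $|\{|Tf|>\mu\}|\le t+\|f\|_{p}^{p}\mu^{-p}t^{-p/r}$, optimized in $t$, gives $\|Tf\|_{q,w}\le 2^{1/q}\|f\|_{p}$ whenever $1/q=1/p+1/r$. So the hypotheses hold for any two such pairs, e.g.\ $r=2$, $(p_1,q_1)=(2,1)$, $(p_2,q_2)=(4,4/3)$. The interpolated pair has $q<p$, and there the function $f(x)=x^{-1/p}(\log(1/x))^{-\beta}\mathbbm{1}_{(0,1/2)}(x)$ with $1/p<\beta\le 1/q$ lies in $L^p$, while $|Tf|^q=x^{-1}(\log(1/x))^{-\beta q}$ is not integrable.

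\textbf{The paper's use is not diagonal.} Your parenthetical that the intended applications are diagonal is inaccurate. In the proof of Lemma 3.8, for the step (\ref{BF3})$\Rightarrow$(\ref{BG3}), the lemma is applied to the linear map $f_{m+1}\mapsto\prod_j T_{r_j,\eta}f_j$. Its output exponent $p$, with $1/p=\sum_j 1/p_j$ and $p_1<\infty$, is strictly smaller than the input exponent $p_{m+1}$. That is exactly the regime your corrected version excludes, so your version does not justify that step of the paper.

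\textbf{Two slips in your off-diagonal sketch.} These are harmless only because you ultimately defer that case to \cite{sw}.

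First, the exponent you wrote collapses to $\sigma=(1/p_1-1/p_2)/(1/q_1-1/q_2)$, which is wrong unless $p_j=q_j$. With truncation at height $c\mu^\sigma$ and $\|f\|_p=1$, the $j$th term is controlled by a power of $\int|f|^{s_j}$ with $s_j=p_j+(q-q_j)p_j/(\sigma q_j)$. Forcing $s_j=p$ gives $\sigma=\frac{q}{p}\cdot\frac{1/q_1-1/q_2}{1/p_1-1/p_2}$, the same value for $j=1,2$ by the affine relations. When $q_2=\infty$ this equals $1/(1-p/p_2)$. That is precisely what yields $\|Tf_2^\mu\|_\infty\le C_2(c\mu^\sigma)^{1-p/p_2}\le\mu$ for a suitable $c$, which is your device for killing the second term.

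Second, Minkowski's integral inequality needs $q_j\ge p_j$ for each $j$ separately, and this does not follow from $p\le q$. Under $p\le q$ alone you really need the Lorentz-space argument $L^p\to L^{q,p}\subset L^q$ that you mention, so the citation is doing genuine work there.
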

  To state the multilinear version of (\ref{Z3}) we first note that since the operator $S_{r,\eta}$ of (\ref{M3})  has range  $\mathcal{L}[L^2(\Om;\mathbb{C}^d)]$, the  product 
$\prod_{j=1}^n  S_{r_j\eta} f_j(\xi)$ is also in  $\mathcal{L}[L^2(\Om;\mathbb{C}^d)]$ and therefore in $\mathcal{L}[\mathbb{C}^d;L^2(\Om;\mathbb{C}^d)]$.
\begin{lem}
Assume $[\xi,\eta]$ satisfy the conditions of Lemma 3.1 and that  functions $f_1,\dots, f_n:\mathbb{T}^d\ra\mathcal{L}(\mathbb{C}^d)$ may be analytically continued to the region
  $\{\xi\in\mathbb{C}^d:  |\Im\xi|^2<c_d\eta\}$.  Let $r_1,\dots,r_n\in\mathbb{Z}$ be non-negative and set $r=r_1+\cdots+r_n$.  Assume $p_1,\dots p_n$ satisfy $2\le p_1,\dots, p_n\le\infty$ and $1/p_1+\cdots+1/p_n=1/p\le 1/2$.
Then  if $a\in\mathbb{R}^d$ satisfies $|a|^2<c_d\eta$,  the  following inequality holds:
\begin{multline} \label{AL3}
 \left\|             \left\{\prod_{j=1}^n  S_{r_j,\eta} f_j\right\} (\cdot+ai)          \right\|_p \\
 \le \ d^{1/2}(1-\la)^{r+n}\left[     1+\frac{C_d|a|^2}{\eta}     \right]^r    \prod_{j=1}^n \|f_j(\cdot+ai)\|_{p_j} \ ,   
\end{multline} 
where the norms in (\ref{AL3}) are given by (\ref{X3}).
\end{lem}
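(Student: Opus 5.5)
We argue by multilinear interpolation, starting from two endpoint estimates, in the same way Lemma 3.2 gave (\ref{Y3}) and then (\ref{Z3}).

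\emph{Step 1: the $L^\infty$ endpoint.} For fixed $\xi$, $S_{r_j,\eta}f_j(\xi+ai)=f_j(\xi+ai-A)\mathbf{b}(\cdot)[T_{\xi+ai,\eta}\mathbf{b}(\cdot)]^{r_j}$ is a bounded operator on $L^2(\Om;\mathbb{C}^d)$. By Lemma 3.1 and $0\le \mathbf{b}\le(1-\la)I_d$, its operator norm is at most
\[
(1-\la)^{r_j+1}\left[1+\frac{C_d|a|^2}{\eta}\right]^{r_j}\sup_\zeta|f_j(\zeta+ai)|,
\]
where $\sup_\zeta|f_j(\zeta+ai)|$ is the operator norm of $f_j(\xi+ai-A)$. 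The spectral theorem for the commuting self-adjoint $A_k$ controls this by $\|f_j(\cdot+ai)\|_\infty$. Multiplying the $n$ factors and then restricting to $\mathbb{C}^d\subset L^2(\Om;\mathbb{C}^d)$ (constant functions), we get
\[
\Big\|\prod_j S_{r_j,\eta}f_j(\xi+ai)\Big\|_{\rm HS}\le \sqrt d\,\Big\|\prod_j S_{r_j,\eta}f_j(\xi+ai)\Big\|_{\rm op}.
\]
This is the case $p_1=\dots=p_n=\infty$ of (\ref{AL3}).

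\emph{Step 2: the mixed $L^2$ endpoints.} Take the case where one index, say $p_k=2$, and all the others are $\infty$. Write the product as $\mathcal{B}\cdot S_{r_k,\eta}f_k\cdot\mathcal{C}$, where $\mathcal{B}=\prod_{j<k}$ and $\mathcal{C}=\prod_{j>k}$. The factors in $\mathcal{B}$ are bounded pointwise in $\xi$ in operator norm by Step 1, and $\|\mathcal{B}X\|_{\rm HS}\le\|\mathcal{B}\|_{\rm op}\|X\|_{\rm HS}$. The difficulty is the factor $\mathcal{C}$ on the right: $S_{r_k,\eta}f_k\,\mathcal{C}$ is not of the form $S_{r_k,\eta}$ applied to a $\mathbb{C}^d$-valued matrix function. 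I expect this to be the main obstacle.

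To handle it, I would prove the operator-valued analogue of (\ref{R3}). For $\mathcal{C}(\xi)$ with values in $\mathcal{L}[\mathbb{C}^d;L^2(\Om;\mathbb{C}^d)]$, I would use the configuration-space formula (\ref{N3}) with $\hat f(y_0)$ replaced by the translate $\tau_{-y_0}$ composed with the appropriate coefficient of $\mathcal{C}$, and apply Parseval in $\xi$ together with the $\ell^2$ convolution bound used in (\ref{T3}), via Lemma 3.1 and measure preservation of $\tau$. This should give
\[
\|S_{r_k,\eta}f_k\,\mathcal{C}\|_2\le(1-\la)^{r_k+1}\left[1+\frac{C_d|a|^2}{\eta}\right]^{r_k}\|f_k(\cdot+ai)\|_2\,\sup_\xi\|\mathcal{C}(\xi)\|_{\rm op}.
\]
If this route proves awkward, the fallback is to estimate through the Hilbert--Schmidt norm of the whole operator $f_k(\xi-A)$ acting from $L^2(\Om;\mathbb{C}^d)$, using again that $f_k(\xi-A)$ is a Fourier multiplier over the translation group. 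Combined with Step 1 for $\mathcal{C}$, this gives (\ref{AL3}) with $p=2$ and $p_k=2$. The $\sqrt d$ is lost only at the single point where the HS norm is compared with the operator norm, so the constant stays at $\sqrt d$ or better.

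\emph{Step 3: interpolation.} With the extreme points $(1/p_1,\dots,1/p_n)$ equal to $0$ or $e_k/2$ established, apply multilinear complex interpolation. This is Riesz--Thorin for multilinear operators on vector-valued $L^p$ spaces with Hilbert-space targets, as in \cite{rs,sw}, applied one slot at a time with the other $f_j$ frozen. It yields (\ref{AL3}) on the simplex $\sum 1/p_j\le 1/2$, $p_j\ge2$. Riesz--Thorin has interpolation constant $1$, and the factors $(1-\la)^{r+n}[1+C_d|a|^2/\eta]^r$ are identical at every vertex, so the final constant is at most $d^{1/2}$ times that product.
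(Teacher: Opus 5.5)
Your endpoint estimates (Steps 1 and 2) are sound and give a genuinely different route from the paper's. Step 3, however, fails as you describe it. In the coordinates $(1/p_1,\dots,1/p_n)$ you only have bounds at $0$ and at the points $\tfrac12 e_k$. Interpolating ``one slot at a time with the other $f_j$ frozen'' therefore only moves you along the edges from $0$ to $\tfrac12 e_k$. For example, to reach $p_1=p_2=4$ by varying slot $2$ with $f_1$ frozen in $L^4$, you would already need a bound at some point $(\tfrac14,s)$ with $s>0$, and you have none.

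What you need is the genuinely multilinear complex interpolation theorem (Calder\'on's method, e.g.\ Bergh--L\"ofstr\"om, Thm.~4.4.1). There all input exponents and the target exponent move simultaneously with the same $\theta$. Since $1/p=\sum_j 1/p_j$ holds at both ends and the constant at $\theta$ is $M_0^{1-\theta}M_1^{\theta}$, the set of admissible $(1/p_1,\dots,1/p_n)$ is convex. It therefore contains the whole simplex $\sum_j 1/p_j\le\tfrac12$, with constant at most $d^{1/2}(1-\la)^{r+n}[1+C_d|a|^2/\eta]^r$. With that replacement your argument closes.

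The paper avoids this heavier theorem by proving the bound on the whole face $1/p_1+\cdots+1/p_{k+1}=\tfrac12$, not just at its vertices, by induction on $k$ with $p_{k+1}=\cdots=p_n=\infty$.
\begin{itemize}
\item Pointwise in $\om$, the factors $j\ge k+2$ are bounded by their $\ell^2$ operator norms in configuration space; this is exactly your treatment of $\mathcal{C}$.
\item Parseval (\ref{AN3}) turns the convolution with $\hat f_{k+1}$ into the pointwise-in-$\xi$ bound $\|\{\prod_{j\le k}S_{r_j,\eta}f_j\}(\xi+ai)\|_{\rm HS}\,\|f_{k+1}(\xi+ai)\|_{\rm HS}$.
\item H\"older with $1/q+1/p_{k+1}=\tfrac12$ and the induction hypothesis in $L^q$ give the face, and linear Riesz--Thorin in the single slot $k+1$ fills in the rest.
\end{itemize}
Your left factor $\mathcal{B}$ is instead controlled by its $L^2(\Om;\mathbb{C}^d)$ operator norm. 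That norm involves $\sup_\zeta\|f_j(\zeta+ai)\|$ over all $\zeta$ (spectral theorem in $A$), so it cannot be paired with $f_k$ pointwise in $\xi$. This is why you obtain only vertices and need simultaneous interpolation. What your route buys is that only the endpoints must be proved, with no induction; what the paper's buys is that only the linear Riesz--Thorin theorem is used.

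Two smaller points:
\begin{itemize}
\item What your configuration-space sketch for $\mathcal{C}$ actually yields is the product of the factor bounds $(1-\la)^{r_j+1}[1+C_d|a|^2/\eta]^{r_j}\|f_j(\cdot+ai)\|_\infty$, $j>k$, not $\sup_\xi\|\mathcal{C}(\xi)\|_{\rm op}$ as such. Here $\hat f_k$ stays at the head of the row chain and the kernels of $\mathcal{C}$ are appended after the $S_{r_k,\eta}$ block. That product bound is all you need.
\item Your fallback cannot work, since $f_k(\xi-A)$ is in general not Hilbert--Schmidt on $L^2(\Om;\mathbb{C}^d)$; take $f_k\equiv I_d$.
\end{itemize}
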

\begin{proof}
We proceed by induction on $k, \ 1\le k\le n$, where $1/p_1+\cdots+1/p_n=1/p\le 1/2$ and $1/p_{k+1}=\cdots 1/p_n=0$.  Following the argument of Lemma 3.2 and using the Riesz-Thorin theorem as in (\ref{Z3}), we see that  (\ref{AL3}) holds when $k=1$. 
Assume now that (\ref{AL3}) holds for some $k<n$, so for $1/p_1+\cdots+1/p_k\le 1/2$ and  $1/p_{k+1}=\cdots 1/p_n=0$.  We show that it also holds when $1/p_1+\cdots+1/p_{k+1}\le  1/2$ and $1/p_{k+2}=\cdots 1/p_n=0$.

We first consider the case when  $1/p_1+\cdots+1/p_{k+1}=  1/2$ and $1/p_{k+2}=\cdots 1/p_n=0$.
We argue as in Lemma 3.2 by  going to the configuration representation as in (\ref{S3}).   Thus we have that
\begin{multline} \label{AM3}
\left[\frac{1}{(2\pi)^d} \int_{\mathbb{T}^d} d\xi  \     \left\|      v^*  \left\{\prod_{j=1}^n  S_{r_j,\eta} f_j\right\}(\xi+ai)        \right\|^2_{L^2(\Om;\mathbb{C}^d)}\right]^{1/2} \\
 \le \  (1-\la)^{r_{k+1}+\cdots+r_n+n-k}\left[     1+\frac{C_d|a|^2}{\eta}     \right]^{r_{k+1}+\cdots+r_n}    \prod_{j=k+2}^n \|f_j(\cdot+ai)\|_{\infty}  \\
 \times \left[\int_\Om d\om \ \sum_{y\in\mathbb{Z}^d} \left|\sum_{y'\in\mathbb{Z}^d} v^*  \left\{\prod_{j=1}^k  \hat{S}_{r_j,\eta} f_j\right\} (y',\om) \hat{f}_{k+1}(y-y')e^{-a\cdot y}  \right|^2 \right]^{1/2}\ .
\end{multline}
 In (\ref{AM3}) the product $\left\{\prod_{j=1}^k  \hat{S}_{r_j,\eta} f_j\right\} (y',\om), \ y'\in\mathbb{Z}^d,$ denotes the configuration space representation of the function  $\xi\ra \prod_{j=1}^k  S_{r_j,\eta} f_j(\xi), \ \xi\in\mathbb{T}^d$, 
 with range $\mathcal{L}(\mathbb{C}^d;L^2(\Om;\mathbb{C}^d))$. By the Parseval theorem we have that
 \begin{multline} \label{AN3}
 \sum_{y\in\mathbb{Z}^d} \left|\sum_{y'\in\mathbb{Z}^d} v^*  \left\{\prod_{j=1}^k  \hat{S}_{r_j,\eta} f_j\right\} (y',\om) \hat{f}_{k+1}(y-y')e^{-a\cdot y}  \right|^2  \ = \ \\
 \frac{1}{(2\pi)^d} \int_{\mathbb{T}^d} d\xi  \left| v^*\left\{\prod_{j=1}^k  S_{r_j,\eta} f_j\right\} (\xi+ai,\om) f_{k+1}(\xi+ai)  \right|^2  \\
 \le \  \frac{1}{(2\pi)^d} \int_{\mathbb{T}^d} d\xi  \left| v^*\left\{\prod_{j=1}^k  S_{r_j,\eta} f_j\right\} (\xi+ai,\om) \right|^2 \left\|f_{k+1}(\xi+ai)  \right\|^2_{\rm HS} \ .
 \end{multline}
 It follows from (\ref{AN3}) and H\"{o}lder's inequality that
 \begin{multline} \label{AO3}
\sum_{m=1}^d \int_\Om d\om \sum_{y\in\mathbb{Z}^d} \left|\sum_{y'\in\mathbb{Z}^d} \mathbf{e}_m^*  \left\{\prod_{j=1}^k  \hat{S}_{r_j,\eta} f_j\right\} (y',\om) \hat{f}_{k+1}(y-y')e^{-a\cdot y}  \right|^2   \\
 \le \  \frac{1}{(2\pi)^d} \int_{\mathbb{T}^d} d\xi  \left\|\left\{\prod_{j=1}^k  S_{r_j,\eta} f_j\right\} (\xi+ai) \right\|_{\rm HS}^2 \left\|f_{k+1}(\xi+ai)  \right\|^2_{\rm HS}  \\
  \le \  \left\|\left\{\prod_{j=1}^k  S_{r_j,\eta} f_j\right\} (\cdot+ai) \right\|_{q_{k+1}}^2 \left\|f_{k+1}(\cdot+ai)  \right\|^2_{p_{k+1}} \ , \quad {\rm where \ } \frac{1}{p_{k+1}}+\frac{1}{q_{k+1}} \ = \ \frac{1}{2} \ .
 \end{multline}
 Since $1/q_{k+1}=1/p_1+\cdots+1/p_k\le 1/2$, we have by the  induction hypothesis that
 \begin{multline} \label{AP3}
 \left\|\left\{\prod_{j=1}^k  S_{r_j,\eta} f_j\right\} (\cdot+ai) \right\|_{q_{k+1}} \ \le \  
 d^{1/2}(1-\la)^{r_1+\cdots+r_k+k}\left[     1+\frac{C_d|a|^2}{\eta}     \right]^{r_1+\cdots r_k}    \prod_{j=1}^k \|f_j(\cdot+ai)\|_{p_j} \ .
 \end{multline}
 We conclude from (\ref{AM3})-(\ref{AP3}) that (\ref{AL3}) holds.
 
 We now have  that (\ref{AL3}) holds when  $1/p_1+\cdots+1/p_k\le 1/2, \ 1/p_{k+2}=\cdots 1/p_n=0$ and $1/p_{k+1}$ satisfies $1/p_{k+1}=0$ or $1/p_1+\cdots+1/p_{k+1}= 1/2$.  Applying the Riesz-Thorin theorem again,
 we conclude that (\ref{AL3}) holds when $1/p_1+\cdots+1/p_{k+1}\le 1/2, \ 1/p_{k+2}=\cdots 1/p_n=0$.  The result follows now by induction.
\end{proof}
\begin{lem}
Under the same conditions as in Lemma 3.6, the following inequality holds: 
\begin{multline} \label{AQ3}
 \left\|             \left\{\prod_{j=1}^n  T_{r_j,\eta} f_j\right\}(\cdot+ai)            \right\|_p \\
 \le \ d^{n/2}(1-\la)^{r+n}\left[     1+\frac{C_d|a|^2}{\eta}     \right]^r   \prod_{j=1}^n (r_j+2)\|f_j(\cdot+ai)\|_{p_j} \ .  
\end{multline} 
\end{lem}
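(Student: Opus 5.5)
The plan is to reduce Lemma 3.7 to Lemma 3.6 by means of the identity (\ref{O3}), much as Lemma 3.3 was deduced from Lemma 3.2. By (\ref{O3}), each factor $T_{r_j,\eta}f_j(\xi+ai)$ is a sum of $r_j+2$ terms. The first term is $S_{r_j,\eta}f_j(\xi+ai)$. The second is $f_j(\xi+ai-A)$ multiplied by the scalar-matrix expectation $\langle \mathbf{b}[T P\mathbf{b}]^{r_j}\rangle$. The remaining $r_j$ terms have the form
\[
S_{k-1,\eta}f_j(\xi+ai)\, g_{k}(\xi+ai)\,\langle \mathbf{b}[T_{\xi+ai,\eta}P\mathbf{b}]^{r_j-k}\rangle, \qquad 1\le k\le r_j,
\]
where $g_k$ is the deterministic matrix $e e^*/(\eta+e^*e)$.

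Expanding the product $\prod_j T_{r_j,\eta}f_j$ then gives $\prod_j(r_j+2)$ products, each built from $S$-factors and deterministic matrix multipliers. I will bound each product separately and sum the bounds, which accounts for the factor $\prod_j (r_j+2)$.

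The key observation is that each term in the decomposition is itself of the form $S_{s,\eta}\tilde f(\xi+ai)$. The modified symbol is $\tilde f = f_j$ in the first case. In the second case $\tilde f = f_j\cdot(\text{matrix})$ with $s=0$: the factor $\mathbf{b}$ sits in front in $S_{0,\eta}$, so to match exactly I will instead treat the whole term as $f_j(\xi-A)$ times a constant matrix. In the third case it is $S_{k-1,\eta}f_j$ right-multiplied by a deterministic matrix $M_k(\xi+ai)$. I will handle the right multiplication by noting that a product $\prod_j S_{s_j}f_j\, M_j$ can be absorbed into Lemma 3.6 by moving each $M_j(\xi)$, which depends only on $\xi$ and not on $A$, into the next factor's symbol. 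This works because $M_j(\xi)\,S_{s_{j+1}}f_{j+1}(\xi) = S_{s_{j+1}}(M_j f_{j+1})(\xi)$ when $M_j$ is a constant (non-random) matrix, up to the ordering issue with $\xi-A$.

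The main obstacle is exactly this commutation. $M_j(\xi)$ multiplies a random operator and does not commute with $f_{j+1}(\xi-A)$. My first attempt is to avoid commuting altogether and instead bound $\|M_k(\xi+ai)\|$ pointwise in $\xi$ by
\[
(1-\la)^{r_j-k+1}\left[1+\frac{C_d|a|^2}{\eta}\right]^{r_j-k+1},
\]
using (\ref{W3}) together with Lemma 3.1 and (\ref{P3}). Since these multipliers are uniformly bounded in operator norm on $\mathbb{C}^d$ for each $\xi$, and the Hilbert--Schmidt norm satisfies $\|XY\|_{\rm HS}\le \|X\|_{\rm HS}\|Y\|$, I will rerun the induction in the proof of Lemma 3.6 with the multipliers inserted. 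In that induction, the Parseval step (\ref{AN3}) is pointwise in $\xi$ after passing to Fourier space, and a bounded matrix factor depending on $\xi$ passes through the pointwise inequality. Each term then satisfies the bound of Lemma 3.6 with $r_j$ replaced by $k-1$, times the pointwise bound on $M_k$. The powers combine to at most $(1-\la)^{r_j+1}[1+C_d|a|^2/\eta]^{r_j}$, after absorbing one extra factor $1+C_d|a|^2/\eta\le$ const (valid since $|a|^2<c_d\eta$) into $C_d$.

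Finally, the constant $d^{1/2}$ in Lemma 3.6 worsens to $d^{n/2}$. This is because at each of the $n$ junctions, converting between the HS norm and the operator norm of a $d\times d$ factor costs at most $\sqrt d$. Summing over the $\prod_j(r_j+2)$ terms then yields (\ref{AQ3}).
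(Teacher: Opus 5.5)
Your proposal has a genuine gap at its first step. The identity (\ref{O3}) holds for $T_{r,\eta}f(\xi)$ only as an element of $\mathcal{L}[\mathbb{C}^d;L^2(\Om;\mathbb{C}^d)]$, that is, when the operator acts on a constant vector. It is not an identity of operators on $L^2(\Om;\mathbb{C}^d)$.

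The product in (\ref{AQ3}) is a composition of operators on $L^2(\Om;\mathbb{C}^d)$ applied to a constant $v$. So only the last factor $T_{r_n,\eta}f_n(\xi)$ acts on a constant. For $j<n$ the factor $T_{r_j,\eta}f_j(\xi)$ acts on the random vector $\{\prod_{i>j}T_{r_i,\eta}f_i\}(\xi)v$. Writing $P=I-\langle\cdot\rangle$ there produces
\[
\langle \mathbf{b}(\cdot)[T_{\xi,\eta}P\mathbf{b}(\cdot)]^{r_j-k}\{\prod_{i>j}T_{r_i,\eta}f_i\}(\xi,\cdot)v\rangle ,
\]
and not $\langle\mathbf{b}[T_{\xi,\eta}P\mathbf{b}]^{r_j-k}\rangle$ followed by the rest of the product. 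Already for $n=2$, $r_1=0$, the correction term is the deterministic matrix $f_1(\xi)\langle\mathbf{b}(\cdot)T_{r_2,\eta}f_2(\xi,\cdot)\rangle$. Your factor-by-factor expansion instead gives $f_1(\xi)\langle\mathbf{b}(\cdot)\rangle T_{r_2,\eta}f_2(\xi,\om)$, which is random.

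Consequently the product does not split into $\prod_j(r_j+2)$ terms built from $S$-factors and multipliers $M_k(\xi)$ that depend only on the $j$th factor. The actual multipliers depend on $f_{j+1},\dots,f_n$. They are controlled only in $L^q$ with $1/q=\sum_{i>j}1/p_i$, and admit no pointwise bound of the type (\ref{W3}). Rerunning the induction of Lemma 3.6 with bounded multipliers inserted therefore has nothing to apply to. The commutation issue you single out is not the real obstruction.

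The missing idea is to expand the projections sequentially, so that each expectation absorbs the entire remaining product. This forces an induction on $n$ rather than a factor-by-factor expansion, and it is exactly (\ref{AR3}). For the terms coming from the first factor, the Schwarz inequality and Lemma 3.1 give
\[
\left|v_1^*\langle\mathbf{b}[T_{\xi+ai,\eta}P\mathbf{b}]^{r_1-k}\{\prod_{j\ge2}T_{r_j,\eta}f_j\}(\xi+ai)\rangle v_2\right|\le(1-\la)^{r_1+1-k}\left[1+\frac{C_d|a|^2}{\eta}\right]^{r_1-k}|v_1|\,\left\|\{\prod_{j\ge2}T_{r_j,\eta}f_j\}(\xi+ai)v_2\right\|_{L^2(\Om;\mathbb{C}^d)} .
\]
Passing to Hilbert--Schmidt norms gives the pointwise product bound $\|S_{k-1,\eta}f_1(\xi+ai)\|_{\rm HS}\,\|\{\prod_{j\ge2}T_{r_j,\eta}f_j\}(\xi+ai)\|_{\rm HS}$. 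Then apply H\"older in $\xi$ with $1/p=1/p_1+1/q_1$. Use (\ref{Z3}) for the first factor, and the induction hypothesis for the remaining $n-1$ $T$-factors in $L^{q_1}$. The later terms of (\ref{AR3}) are handled the same way, using Lemma 3.6 for the leading product of $S$-factors. The count (\ref{BD3}) then yields the factor $\prod_j(r_j+2)$.
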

\begin{proof}
We proceed by induction, assuming that (\ref{AQ3}) holds for $n-1$ functions $f_j$ and showing that it also holds for $n$ functions $f_j$.   We make use of an identity which generalizes (\ref{O3}). Thus we have that
\begin{multline} \label{AR3}
 \left\{\prod_{j=1}^n  T_{r_j,\eta} f_j\right\}(\xi,\om)    \ =  \  \left\{\prod_{j=1}^n  S_{r_j,\eta} f_j\right\}(\xi,\om) \\
  -f_1(\xi-A)\sum_{k=0}^{r_1-1}[\mathbf{b}(\cdot)T_{\xi,\eta}]^k(\om)\left\langle \  \mathbf{b}(\cdot)[T_{\xi,\eta} P\mathbf{b}(\cdot)]^{r_1-k} \left\{\prod_{j=2}^n  T_{r_j,\eta} f_j\right\}(\xi,\cdot)  \   \right\rangle  \\
  - \sum_{m=1}^{n-2} \left\{\prod_{j=1}^{m}f_j(\xi-A)[\mathbf{b}(\cdot)T_{\xi,\eta}]^{r_j}\right\}f_{m+1}(\xi-A) \\
  \times\sum_{k=0}^{r_{m+1}-1}[\mathbf{b}(\cdot)T_{\xi,\eta}]^k(\om)\left\langle \  \mathbf{b}(\cdot)[T_{\xi,\eta} P\mathbf{b}(\cdot)]^{r_{m+1}-k} \left\{\prod_{j=m+2}^n  T_{r_j,\eta} f_j\right\}(\xi,\cdot)  \   \right\rangle 
  \\ 
  - \left\{\prod_{j=1}^{n-1}f_j(\xi-A)[\mathbf{b}(\cdot)T_{\xi,\eta}]^{r_j}\right\} f_n(\xi-A)\sum_{k=0}^{r_n}[\mathbf{b}(\cdot)T_{\xi,\eta}]^k(\om)\left\langle \  \mathbf{b}(\cdot)[T_{\xi,\eta} P\mathbf{b}(\cdot)]^{r_n-k} \   \right\rangle  \ .
\end{multline}
Evidently Lemma 3.6 yields a bound for the first term on the RHS of (\ref{AR3}).  

We consider the second term. We have as in (\ref{P3}) that  for $v_1,v_2\in\mathbb{C}^d$, 
\begin{multline} \label{AS3}
v_1^*\left\langle \  \mathbf{b}(\cdot)[T_{\xi,\eta} P\mathbf{b}(\cdot)]^{r_1-k} \left\{\prod_{j=2}^n  T_{r_j,\eta} f_j\right\}(\xi,\cdot)  \   \right\rangle  v_2 \\
 = \  \left[              \mathbf{b}(\cdot)    \{T_{\xi,\eta} P\mathbf{b}(\cdot)\}^{r_1-k}v_1, \  \left\{\prod_{j=2}^n  T_{r_j,\eta} f_j\right\}(\xi,\cdot)   v_2            \right]_{L^2(\Om;\mathbb{C}^d)} \ , \quad \xi\in\mathbb{R}^d \ .
\end{multline}
We see from the Schwarz inequality and Lemma 3.1  that the analytic continuation to  $\xi+ai$ of the LHS of (\ref{AS3}) is bounded by
\be \label{AT3}
(1-\la)^{r_1+1-k}\left[     1+\frac{C_d|a|^2}{\eta}     \right]^{r_1-k}|v_1| \left\|               \left\{\prod_{j=2}^n  T_{r_j,\eta} f_j\right\}(\xi,\cdot)   v_2                    \right\|_{L^2(\Om;\mathbb{C}^d)} \ .
\ee
Applying vectors $v_1,v_2$ to the left and right hand side of the second term on the right in (\ref{AR3}), we obtain using (\ref{AT3}) the inequality,
\begin{multline} \label{AU3}
\left|   v_1^*G(\xi+ai,\om)v_2           \right|  \\
\le \ (1-\la)^{r_1+1-k}\left[     1+\frac{C_d|a|^2}{\eta}     \right]^{r_1+1-k}|v_1*S_{k-1,\eta} f_1(\xi+ai,\om)| \left\|               \left\{\prod_{j=2}^n  T_{r_j,\eta} f_j\right\}(\xi+ai,\cdot)   v_2                    \right\|_{L^2(\Om;\mathbb{C}^d)}  \ , \\
G(\xi,\om) \ = \ f_1(\xi-A)[\mathbf{b}(\cdot)T_{\xi,\eta}]^k(\om)\left\langle \  \mathbf{b}(\cdot)[T_{\xi,\eta} P\mathbf{b}(\cdot)]^{r_1-k} \left\{\prod_{j=2}^n  T_{r_j,\eta} f_j\right\}(\xi,\cdot)  \   \right\rangle  \ .
\end{multline} 
Noting that $\sup_{|v_1|=1} |v_1^*S_{k-1,\eta} f_1(\xi+ai,\om)| \le \|S_{k-1,\eta} f_1(\xi+ai,\om)\|_{\rm HS}$,  we see from (\ref{AU3}) that
\begin{multline} \label{AV3}
\|G(\xi+ai,\om) \|^2_{\rm HS} \ \le \ (1-\la)^{2(r_1+1-k)}\left[     1+\frac{C_d|a|^2}{\eta}     \right]^{2(r_1+1-k)}\ \\
\times \|S_{k-1,\eta} f_1(\xi+ai,\om)\|^2_{\rm HS}
\left\|               \left\{\prod_{j=2}^n  T_{r_j,\eta} f_j\right\}(\xi+ai,\cdot)                      \right\|^2_{\rm HS}  \ .
\end{multline}
Integrating (\ref{AV3}) over $\om\in\Om,$ we conclude that
\begin{multline} \label{AW3}
\|G(\xi+ai,\cdot) \|_{\rm HS} \ \le \ (1-\la)^{r_1+1-k}\left[     1+\frac{C_d|a|^2}{\eta}     \right]^{r_1+1-k} \\
\times \|S_{k-1,\eta} f_1(\xi+ai,\cdot)\|_{\rm HS}
\left\|               \left\{\prod_{j=2}^n  T_{r_j,\eta} f_j\right\}(\xi,\cdot)                      \right\|_{\rm HS}  \ .
\end{multline}
Integrating  the $p$th power of (\ref{AW3}) over $\xi\in\mathbb{T}^d$ and applying H\"{o}lder's inequality, we obtain the inequality
\begin{multline} \label{AX3}
\|G(\cdot+ai) \|_p \ \le \  (1-\la)^{r_1+1-k}\left[     1+\frac{C_d|a|^2}{\eta}     \right]^{r_1+1-k} \\
\times   \|S_{k-1,\eta} f_1(\cdot+ai)\|_{p_1} \left\|               \left\{\prod_{j=2}^n  T_{r_j,\eta} f_j\right\}(\cdot+ai)                      \right\|_{q_1} \ , \quad {\rm where \ } \frac{1}{p_1}+\frac{1}{q_1}=\frac{1}{p} \ .
\end{multline}
Observe that $1/q_1=1/p_2+\cdots+1/p_{k+1}\le 1/2$. Hence by the induction hypothesis we have from (\ref{AQ3}) that
\begin{multline} \label{AY3}
 \left\|             \left\{\prod_{j=2}^n  T_{r_j,\eta} f_j\right\}(\cdot+ai)            \right\|_{q_1} \\
 \le \ d^{(n-1)/2}(1-\la)^{r-r_1+n-1}\left[     1+\frac{C_d|a|^2}{\eta}     \right]^{r-r_1}   \prod_{j=2}^n (r_j+2)\|f_j(\cdot+ai)\|_{p_j} \ .  
\end{multline} 
It follows from (\ref{AX3}), (\ref{AY3}) and  (\ref{Z3})  that
\begin{multline} \label{AZ3}
\|G(\cdot+ai) \|_p \ \le \  d^{n/2}(1-\la)^{r+n}\left[     1+\frac{C_d|a|^2}{\eta}     \right]^{r} \\
\times   \| f_1(\cdot+ai)\|_{p_1}   \prod_{j=2}^n (r_j+2)\|f_j(\cdot+ai)\|_{p_j} \ .          
\end{multline}
Summing the inequality (\ref{AZ3})  over $k$ in (\ref{AU3}) with $0\le k\le r_1-1$, we conclude that the $p$ norm -as defined by (\ref{X3})- of the second term on the RHS of (\ref{AR3}) is bounded by
\be \label{BA3}
d^{n/2}(1-\la)^{r+n}\left[     1+\frac{C_d|a|^2}{\eta}     \right]^{r} \\
   r_1\| f_1(\cdot+ai)\|_{p_1}   \prod_{j=2}^n (r_j+2)\|f_j(\cdot+ai)\|_{p_j} \ .
\ee

The remaining terms on the RHS of (\ref{AR3}) may be estimated in a similar way. Thus, using Lemma 3.6 and the induction hypothesis,  the $p$ norm of the $m$th term in the sum over $0\le m\le n-2$ is bounded by
\begin{multline} \label{BB3}
d^{(n-m)/2}(1-\la)^{r+n}\left[     1+\frac{C_d|a|^2}{\eta}     \right]^{r} \\
   \times  \prod_{j=1}^m \|f_j(\cdot+ai)\|_{p_j}  r_{m+1}\| f_{m+1}(\cdot+ai)\|_{p_{m+1}}   \prod_{j=m+2}^n (r_j+2)\|f_j(\cdot+ai)\|_{p_j} \ .
\end{multline}
The $p$ norm of the final term on the RHS of  (\ref{AR3}) is bounded by
\be \label{BC3}
d^{1/2}(1-\la)^{r+n}\left[     1+\frac{C_d|a|^2}{\eta}     \right]^{r}  \left\{\prod_{j=1}^{n-1} \|f_j(\cdot+ai)\|_{p_j}\right\}  (r_{n}+1)\| f_{n}(\cdot+ai)\|_{p_{m+1}}   \ .
\ee

We have assumed that the inequality (\ref{AQ3}) holds for $n-1$ functions $f_j$.  Since
\be \label{BD3}
1+\sum_{m=0}^{n-2}r_{m+1}\prod_{j=m+2}^n (r_j+2)+r_n+1 \ \le \ \prod_{j=1}^n (r_j+2) \ ,
\ee
we conclude from (\ref{BA3})-(\ref{BC3}) that (\ref{AQ3}) also holds for $n$ functions $f_j$, whence the result follows by induction.
\end{proof}
We can easily obtain  a version of Lemma 3.7 corresponding to weak norms by invoking the Hunt and Marcinkiewicz interpolation theorems:   
\begin{lem}
Under the same conditions as in Lemma 3.6 and assuming $\sup_{1\le j\le n} p_j<\infty, \ p>2$, the following inequality holds: 
\begin{multline} \label{BE3}
 \left\|             \left\{\prod_{j=1}^n  T_{r_j,\eta} f_j\right\}(\cdot+ai)            \right\|_{p,w} \\
 \le \ Md^{n/2}(1-\la)^{r+n}\left[     1+\frac{C_d|a|^2}{\eta}     \right]^r   \prod_{j=1}^n (r_j+2)\|f_j(\cdot+ai)\|_{p_j,w} \ ,
\end{multline} 
where $M$ depends only on   $p_1,\dots,p_n$.  
\end{lem}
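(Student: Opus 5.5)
The estimate (\ref{BE3}) will follow from Lemma 3.7 by multilinear interpolation, carried out one argument at a time. We fix all but one of the functions $f_j$ and regard the map
\[ f_k \mapsto \left\{\prod_{j=1}^n T_{r_j,\eta} f_j\right\}(\cdot+ai) \]
as a linear operator in the single remaining argument $f_k$. Lemma 3.7 shows that this operator is bounded from $L^{p_k}$ to $L^{p}$, where $1/p=\sum_j 1/p_j$. The bound holds for every $p_k$ in a range of exponents, provided the other exponents are held fixed and $1/p\le 1/2$. Since $p>2$ and all $p_j<\infty$, we may choose two exponents $p_k^{(1)}<p_k<p_k^{(2)}$ with $p_k^{(1)}\ge2$ such that the corresponding $p^{(1)},p^{(2)}$ remain in the admissible range $[2,\infty)$. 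The strict inequalities $p>2$ and $p_j<\infty$ are exactly what make this room available. Hunt's theorem (Lemma 3.4) then upgrades the strong bound in the $k$th slot to a weak-to-weak bound,
\[ \|\cdot\|_{p,w}\le M\,(\cdots)\,\|f_k\|_{p_k,w}\prod_{j\neq k}\|f_j\|_{p_j} , \]
with constant $M C_1^\theta C_2^{1-\theta}$. Here $C_1,C_2$ are the constants of (\ref{AQ3}) at the endpoint exponents. Because the prefactor $d^{n/2}(1-\la)^{r+n}[1+C_d|a|^2/\eta]^r\prod_j(r_j+2)$ does not depend on the exponents, the interpolated constant is that same prefactor times $M$.

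To bring the remaining slots into weak norms, we proceed by induction on the number of arguments already measured in weak norms. Suppose we have the mixed bound in which $f_1,\dots,f_{k}$ appear in weak norms, $f_{k+1},\dots,f_n$ in strong norms, and the output is in the weak norm $\|\cdot\|_{p,w}$. This bound is of strong-to-weak type in the argument $f_{k+1}$. Fixing all other arguments, we apply it at two exponents $p_{k+1}^{(1)}<p_{k+1}<p_{k+1}^{(2)}$, which gives two strong-to-weak estimates. Interpolating these with Hunt's theorem produces a weak-to-weak estimate in $f_{k+1}$. (Marcinkiewicz, Lemma 3.5, would instead produce a strong output, which is useful only as an auxiliary step.) After $n$ steps every $f_j$ is measured in $\|\cdot\|_{p_j,w}$ and the output is in $\|\cdot\|_{p,w}$. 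Each step multiplies the constant by a factor depending only on the exponents, so the final $M$ depends only on $p_1,\dots,p_n$.

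The main obstacle is justifying Hunt interpolation when only one argument varies and the target is already a weak space. The statement of Lemma 3.4 requires strong $L^{q}$ targets at the endpoints, whereas after the first step our endpoint bounds have weak targets. I would first try to avoid this by ordering the steps differently: at every stage, keep the output strong at the two endpoint exponents and use Lemma 3.4 to go directly from a strong-to-strong bound to a weak-to-weak bound. This requires a bound that is strong in the output and weak in the already-treated inputs, which is not available directly. To obtain it, we would apply Marcinkiewicz (Lemma 3.5) to restore a strong output: from weak-output bounds at two exponents it yields a strong-output bound at intermediate exponents, again with the inputs strong. If even this ordering fails, the fallback is to use the general form of Hunt's theorem (Lorentz spaces $L^{p,\infty}\to L^{q,\infty}$ from weak-type endpoints). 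That form is standard and is the version cited in \cite{rs,sw}; I would invoke it directly. The only checks needed are that the linear maps take values in Hilbert spaces, which holds for the Hilbert–Schmidt norms in (\ref{X3}), and that each interpolation keeps the exponents strictly inside the allowed range.
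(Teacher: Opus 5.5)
Your argument is correct, and its skeleton is the paper's. Both proceed by induction on the number of slots already carried in weak norms. In both, the base case comes from Lemma 3.7 at two endpoint exponents in a single slot, followed by Hunt's theorem (Lemma 3.4); the paper uses $1/p_1=0$ and $1/p_1=1/2-\alpha$, where $\alpha=\sum_{j\ge 2}1/p_j<1/2$.

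The difference lies in the induction step. The paper's step is exactly the reordering you list as your first try. It applies the Marcinkiewicz theorem (Lemma 3.5) in the slot $f_{m+1}$ to \eqref{BF3} with $k=m$, producing the strong-output bound \eqref{BG3}. It then applies Lemma 3.4 with strong endpoints to reach $k=m+1$. Your main line instead applies Hunt's theorem in its Lorentz-space form directly to the two strong-to-weak endpoint bounds. That form is real interpolation with second index $\infty$, and it works equally for Hilbert-space-valued functions.

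The paper's route has the merit of using only the two interpolation statements written down in \S 3. Yours has the merit of bypassing \eqref{BG3}, which is the fragile point. In the slot $f_{m+1}$ the output exponent $p$ is strictly smaller than the input exponent $p_{m+1}$. The off-diagonal Marcinkiewicz theorem gives strong type only when input exponents do not exceed output exponents, so Lemma 3.5 as stated (without that hypothesis) does not cover this case. For example, multiplication by a function $g\in L^s_w\setminus L^s$ maps $L^{p_j}$ boundedly into $L^{q_j}_w$, with $1/q_j=1/p_j+1/s$, for every $p_j$. Yet it is bounded from $L^p$ to $L^q$ for no such pair. Hunt's weak-to-weak conclusion carries no such restriction.

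So you should make the direct Lorentz-space Hunt step the proof, rather than keeping it as a fallback behind the Marcinkiewicz reordering.
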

\begin{proof}
We prove (\ref{BE3}) using induction. Thus we assume that for some $k$ with $1\le k<n$ one has
\begin{multline} \label{BF3}
 \left\|             \left\{\prod_{j=1}^n  T_{r_j,\eta} f_j\right\}(\cdot+ai)            \right\|_{p,w} \\
 \le \ Md^{n/2}(1-\la)^{r+n}\left[     1+\frac{C_d|a|^2}{\eta}     \right]^r   \prod_{j=1}^k (r_j+2)\|f_j(\cdot+ai)\|_{p_j,w}  \prod_{j=k+1}^n (r_j+2)\|f_j(\cdot+ai)\|_{p_j} \ ,
\end{multline}
for all $p_1,\dots, p_n$ satisfying $\sup_{1\le j\le k} p_j<\infty$ and $p>2$, where the constant $M$ depends only on $p_1,\dots, p_n$. To see  that  (\ref{BF3}) holds when $k=1$, we consider $p_2,\dots,p_n$ to be fixed and satisfying
$1/p_2+\cdots+1/p_n=\al<1/2$.  Now we use the fact that (\ref{AQ3}) holds when $1/p_1=0, \ 1/p=\al$, and $1/p_1=1/2-\al, \ 1/p=1/2$. It follows then from  the Hunt interpolation theorem, Lemma 3.5, that (\ref{BF3}) holds for $k=1$ with constant $M$ depending only on $1/p_1,\al$.

In order to show that if  (\ref{BF3}) holds for some $k=m<n$, it also holds for $k=m+1$, we need also the Marcinkiewicz theorem, Lemma 3.6.  We apply Lemma 3.6 to the inequality (\ref{BF3}) when $k=m$  for the function $f_{m+1}$ to  obtain the inequality, 
\begin{multline} \label{BG3}
 \left\|             \left\{\prod_{j=1}^n  T_{r_j,\eta} f_j\right\}(\cdot+ai)            \right\|_{p} \\
 \le \ Md^{n/2}(1-\la)^{r+n}\left[     1+\frac{C_d|a|^2}{\eta}     \right]^r   \prod_{j=1}^m (r_j+2)\|f_j(\cdot+ai)\|_{p_j,w}  \prod_{j=m+1}^n (r_j+2)\|f_j(\cdot+ai)\|_{p_j} \ .
\end{multline}
The constant $M$ in (\ref{BG3}) differs from the constant $M$ in (\ref{BF3}) when $k=m$ since it includes now the constant in the inequality (\ref{AG3}).  However it continues to depend only on $p_1,\dots, p_n$. 
Next we apply the Hunt interpolation theorem, Lemma 3.5, to the inequality (\ref{BG3})  for the function $f_{m+1}$.  This shows that (\ref{BF3}) holds when $k=m+1$. 
\end{proof}
We apply the previous results to obtain bounds on derivatives of the function $\xi\ra h_m(\xi+ai,\eta), \ \xi\in\mathbb{T}^d,$ of (\ref{D3}).  Parallel to Proposition 2.2 of \cite{cs1} we have:
\begin{proposition}
For fixed $\eta$ with $0<\eta\le 1$, and $a\in\mathbb{R}^d$ satisfying $|a|^2<c_d\eta$ as in Lemma 3.1,  the derivatives of the function  $\xi\ra h_m(\xi+ai,\eta), \ \xi\in\mathbb{T}^d$, have the following properties: \\
Let $\rho=[\rho_1,\dots,\rho_d]$ be a $d-$tuple of non-negative integers with norm $|\rho|=\rho_1+\cdots+\rho_d$.  If $|\rho|<d$ the derivative
\be \label{BH3}
\prod_{j=1}^d \left(\frac{\pa}{\pa \xi_j}\right)^{\rho_j} h_m(\xi+ai,\eta)
\ee
is in $L^p_w(\mathbb{T}^d;\mathcal{L}(\mathbb{C}^d))$ with $p=d/|\rho|$, and its norm is bounded as
\be \label{BI3}
\left\|                 \prod_{j=1}^d \left(\frac{\pa}{\pa \xi_j}\right)^{\rho_j} h_m(\xi+ai,\eta)           \right\|_{p,w} \ \le \ Mm^{3|\rho|}(1-\la)^{m+1}\left[     1+\frac{C_d|a|^2}{\eta}     \right]^m \ , 
\ee
where the constant $M$ depends only on $d$. If $|\rho|=d-1$ then the fractional derivative 
\be \label{BW3}
\frac{1}{|\De\xi|^{1-\nu}}\prod_{j=1}^d \left(\frac{\pa}{\pa \xi_j}\right)^{\rho_j}\left[ h_m(\xi+\De\xi+ai,\eta)-h_m(\xi+ai,\eta)\right]
\ee
with $0<\nu\le 1$ and  $|\De\xi|\le 1$ is in $L^p_w(\mathbb{T}^d;\mathcal{L}(\mathbb{C}^d))$ with $p=d/(d-\nu)$, and its norm is bounded by the RHS of (\ref{BI3}) with  $m^{3d}$ in place of $m^{3|\rho|}$. The constant
$M$ depends now on $\nu>0$ as well as $d$. 
\end{proposition}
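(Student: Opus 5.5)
We first write $h_m$ as a multilinear expression in the operators $T_{r,\eta}$. The matrix identity (\ref{P3}), with $\xi$ continued to $\xi+ai$, expresses $h_m(\xi+ai,\eta)$ as a pairing of $\mathbf{b}(\cdot)$ with $\{T_{\xi+ai,\eta}P\mathbf{b}(\cdot)\}^m$. Here $T_{\xi,\eta}=f(\xi-A)$, with $f$ given by (\ref{A3}). Hence
\[
h_m(\xi+ai,\eta)=\left\langle\, \mathbf{b}(\cdot)\, T_{m-1,\eta}f(\xi+ai)\,\right\rangle .
\]

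We then differentiate in $\xi$ with the Leibniz rule. A derivative falling on a factor $f(\xi-A)$ produces $(\pa^{\sigma}f)(\xi-A)$. Distributing $\pa^\rho$ over the $m$ factors of $f$ therefore writes $\pa^\rho h_m$ as a sum of at most $m^{|\rho|}$ terms. Each term has the form
\[
\left\langle\, \mathbf{b}(\cdot)\prod_{j=1}^n T_{r_j,\eta}f_j(\xi+ai)\,\right\rangle ,
\]
where $n\le|\rho|+1$, $r_1+\cdots+r_n+n=m$, and each $f_j$ is either $f$ or a derivative $\pa^{\sigma_j}f$ with $\sum_j|\sigma_j|=|\rho|$. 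To obtain this form we group the undifferentiated copies of $f$ into the powers $[T_{\xi,\eta}P\mathbf{b}]^{r_j}$ that define the $T_{r_j,\eta}$ in (\ref{M3}).

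The scalar input is a bound on $\pa^\sigma f(\cdot+ai)$ in a weak space. From (\ref{A3}), $f$ is homogeneous of degree $0$ near $\zeta=0$, smoothed at the scale $\sqrt{\eta}$. Hence
\[
|\pa^\sigma f(\zeta+ai)|\le C/(|\zeta|+\sqrt\eta)^{|\sigma|}
\]
uniformly in $\eta$ and $|a|^2<c_d\eta$, using Lemma 3.1 to control the denominator. It follows that $\|\pa^\sigma f(\cdot+ai)\|_{d/|\sigma|,w}\le C$ for $|\sigma|<d$, and $\|f\|_\infty\le C$. We choose $p_j=d/|\sigma_j|$ (and $p_j=\infty$ when $\sigma_j=0$), so that $1/p=\sum_j 1/p_j=|\rho|/d<1$.

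Lemma 3.8 requires $p>2$, i.e.\ $|\rho|<d/2$. For larger $|\rho|$ we do not take the expectation through $L^2$ of the whole product. Instead we split the product at an intermediate factor and use the Schwarz inequality in $L^2(\Om)$, as in (\ref{AS3})--(\ref{AX3}). Each half is then estimated by Lemma 3.8 in weak spaces $L^{q_1}_w$, $L^{q_2}_w$ with $q_1,q_2>2$ and $1/q_1+1/q_2=|\rho|/d$. The weak-type Hölder inequality then gives a bound in $L^{d/|\rho|}_w$. If some single factor has $1/p_j\ge 1/2$, so that no such split is available, we argue differently. We first apply Lemma 3.1 to bound $\|T_{r,\eta}\pa^\sigma f\|$ pointwise by $(1-\la)^{r+1}[1+C_d|a|^2/\eta]^r|\pa^\sigma f|$, and then use weak Hölder on $\mathbb{T}^d$ directly.

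The constants collect as follows. The Leibniz expansion contributes at most $m^{|\rho|}$ terms. The factors $\prod_j(r_j+2)\le (m+2)^{n}$ contribute at most $m^{2|\rho|}$ up to constants. Together these give the factor $m^{3|\rho|}$. The factors $(1-\la)^{m+1}$ and $[1+C_d|a|^2/\eta]^m$ come directly from Lemma 3.8.

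For the fractional statement with $|\rho|=d-1$ we use
\[
\big|\pa^\sigma f(\zeta+\De\xi)-\pa^\sigma f(\zeta)\big|\le C|\De\xi|^{1-\nu}\big[|\zeta|^{-|\sigma|-1+\nu}+|\zeta+\De\xi|^{-|\sigma|-1+\nu}\big].
\]
The difference $h_m(\xi+\De\xi)-h_m(\xi)$ telescopes into a sum over the factor being differenced. The differenced factor lies in $L^{d/(|\sigma|+1-\nu)}_w$. Differencing a factor $T_{\xi,\eta}$ inside the $T_{r,\eta}$ simply raises the derivative order on that copy of $f$, and this costs at most another factor of $m$ in the count. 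The same multilinear argument then gives membership in $L^p_w$ with $p=d/(d-\nu)$.

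The main obstacle is that Lemmas 3.7 and 3.8 require $1/p\le1/2$, while the target exponent $d/|\rho|$ can be close to $1$. The Schwarz splitting is what bridges this gap. Its delicate part is keeping the expectation $\langle\mathbf{b}\,\cdot\,\rangle$ and the projections $P$ compatible with the split, using the structure of (\ref{AR3}), while still ensuring that each half has exponent above $2$.
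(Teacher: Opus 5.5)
Most of your outline matches the paper's proof. This includes the representation of $h_m$, the Leibniz expansion into at most $m^{|\rho|}$ terms $\langle\mathbf{b}(\cdot)\prod_j T_{r_j,\eta}f_j\rangle$, Lemma 3.8 directly when $|\rho|<d/2$, a Schwarz split in $L^2(\Om;\mathbb{C}^d)$ with weak H\"older when $|\rho|\ge d/2$, and a difference quotient for the fractional statement.

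\textbf{The step that fails is your fallback.} There is no pointwise bound $\|T_{r,\eta}\pa^\sigma f(\xi+ai)\|\le(1-\la)^{r+1}[1+C_d|a|^2/\eta]^r|\pa^\sigma f(\xi+ai)|$.
\begin{itemize}
\item The operator contains $(\pa^\sigma f)(\xi-A)$, a multiplier evaluated on the spectrum of $A$. At fixed $\xi$ it therefore sees the values of $\pa^\sigma f$ on all of $\xi$ minus that spectrum, in particular near the singularity at $0$.
\item Lemma 3.1 gives a bound on $\|T_{\xi,\eta}\|$ uniform in $\xi$ only because $f$ is bounded. Its reduction to $A\equiv0$ is legitimate only for bounds independent of $\Re\xi$. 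Applied to $\pa^\sigma f$, the same argument yields only $\sup_\zeta|\pa^\sigma f(\zeta+ai)|\sim\eta^{-|\sigma|/2}$; compare (\ref{U3}), which is a $\sup_\xi$-to-$\sup_\xi$ bound.
\item Concretely, in the i.i.d.\ environment $P\mathbf{b}(\cdot)v$ has Lebesgue spectral measure. So for every $\xi$, $\|(\pa^\sigma f)(\xi-A)P\mathbf{b}(\cdot)v\|^2_{L^2(\Om;\mathbb{C}^d)}$ is, up to the covariance of $\mathbf{b}$, the $\xi$-independent integral $\int_{\mathbb{T}^d}|\pa^\sigma f(\zeta)|^2\,d\zeta$. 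For $|\sigma|\ge d/2$ this diverges as $\eta\to0$.
\end{itemize}
Hence a factor with $1/p_j\ge 1/2$ cannot be controlled on its own in $L^2(\Om)$ uniformly in $\eta$, by any method. This is exactly why Lemma 3.8 needs $p>2$. If your pointwise bound were true, the whole Lemma 3.2--3.8 machinery would be superfluous.

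\textbf{The missing idea is the paper's factorization of the offending symbol}, (\ref{BT3})--(\ref{BV3}).
\begin{itemize}
\item Take the first $k$ with $\sum_{j\le k}1/p_j<1/2\le\sum_{j\le k+1}1/p_j$.
\item Write $f_{k+1}=f^a_{k+1}f^b_{k+1}$ with each factor less singular. For instance, take $f^a_{k+1}$ to be a negative fractional power of $\eta+e(\zeta)^*e(\zeta)$ times $I_d$.
\item Move $f^a_{k+1}$, continued to $\xi-ai$, to the left side of the $L^2(\Om;\mathbb{C}^d)$ inner product. Then both sides have exponent sum strictly below $1/2$, as in (\ref{BU3}), and Lemma 3.8 plus weak H\"older apply.
\end{itemize}

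Your criterion for when no split is available is also wrong. Splitting at factor boundaries can fail even when every $1/p_j<1/2$: take $d=20$ and $|\sigma_j|=7,4,7$, whose partial sums $0.35,\,0.55,\,0.9$ never land in $(0.4,0.5)$. So the factorization is needed beyond the case of one large factor.

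This case is not exotic. For $d=2$, the first derivative itself lands here: after the undifferentiated prefix there is a single factor with $1/p_1=1/2$. For the fractional statement, the total exponent $(d-\nu)/d$ is close to $1$, so the situation arises routinely.
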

\begin{proof}
We see from (\ref{A3}), (\ref{D3}), (\ref{M3}) that
\begin{multline} \label{BJ3}
\frac{\pa}{\pa \xi_j} h_m(\xi+ai)  \ = \  \sum_{r=1}^m \left\langle \  \mathbf{b}(\cdot)[PT_{\xi+ai,\eta}\mathbf{b}(\cdot)]^{r-1} T_{m-r,\eta}f(\xi+ai) \  \right\rangle  \  , \\
{\rm where \ } f(\zeta) \ = \ \frac{\pa}{\pa \zeta_j}\left\{ \frac{e(\zeta)e^*(\zeta)}{\eta+e(\zeta)^*e(\zeta)}\right\} \ .
\end{multline}
The function $\xi\ra f(\xi+ai), \ \xi\in\mathbb{T}^d,$ is in $L^d_w(\mathbb{T}^d;\mathcal{L}(\mathbb{C}^d))$ with norm bounded by a constant depending only on $d$. Arguing as in Lemma 3.7, we have 
from the Schwarz inequality that for $v_1,v_2\in\mathbb{C}^d$,
\begin{multline} \label{BK3}
\left|v_1^*\left\langle \  \mathbf{b}(\cdot)[PT_{\xi+ai,\eta}\mathbf{b}(\cdot)]^{r-1} T_{m-r,\eta}f(\xi+ai) \  \right\rangle v_2  \right|  \\
 = \ \left|\left[   \mathbf{b}(\cdot) \{PT_{\xi-ai,\eta}\mathbf{b}(\cdot)\}^{r-1} v_1,    T_{m-r,\eta}f(\xi+ai) v_2\  \right]_{L^2(\Om;\mathbb{C}^d)}\right| \\
 \le \ (1-\la)^{r}\left[     1+\frac{C_d|a|^2}{\eta}     \right]^{r-1}|v_1| \left\|   T_{m-r,\eta}f(\xi+ai) v_2                        \right\|_{L^2(\Om;\mathbb{C}^d)} \ .
\end{multline}
Taking the supremum in (\ref{BK3}) over $\{v_1\in\mathbb{C}^d: \ |v_1|=1\}$ and summing  through an orthonormal basis of $v_2\in\mathbb{C}^d$, we see that (\ref{BK3}) implies the inequality
\begin{multline} \label{BL3}
\left\|\left\langle \  \mathbf{b}(\cdot)[PT_{\xi+ai,\eta}\mathbf{b}(\cdot)]^{r-1} T_{m-r,\eta}f(\xi+ai) \  \right\rangle  \right\|_{\rm HS}   \\
 \le \ (1-\la)^{r}\left[     1+\frac{C_d|a|^2}{\eta}     \right]^{r-1}\left\|   T_{m-r,\eta}f(\xi+ai)                       \right\|_{\rm HS} \ .
\end{multline}
It follows now from Lemma 3.8 and (\ref{BL3}) that
\begin{multline} \label{BM3}
\left\|\left\langle \  \mathbf{b}(\cdot)[PT_{\cdot+ai,\eta}\mathbf{b}(\cdot)]^{r-1} T_{m-r,\eta}f(\cdot+ai) \  \right\rangle  \right\|_{d,w}   \\
 \le \ M\sqrt{d}(1-\la)^{m+1}\left[     1+\frac{C_d|a|^2}{\eta}     \right]^{m-1}(m+2-r)\left\|  f(\cdot+ai)                       \right\|_{d,w} \ .
\end{multline}
Next we use the quasi sub-additivity of weak norms.  Let $(\mathcal{M},\mu)$ be a measure space and $g_k:\mathcal{M}\ra\mathbb{R}, \ k=1,\dots,N,$ be measurable functions such that each $g_k\in L_w^p(\mathcal{M};\mathbb{R})$. Then $g=g_1+\cdots+ g_N$ is also in $L_w^p(\mathcal{M};\mathbb{R})$  and
\be \label{BN3}
\|g\|_{p,w} \ \le \ N\left\{ \sum_{j=1}^N \|g_j\|_{p,w}^p\right\}^{1/p} \ .
\ee
We see from  (\ref{BJ3}), (\ref{BM3}), (\ref{BN3})  that (\ref{BI3}) holds when $d>2,|\rho|=1$.

More generally, for $|\rho|\ge 1$, the derivative (\ref{BH3}) is a sum of $m^{|\rho|}$ terms. Each term has the form
\begin{multline} \label{BO3}
\left\langle \  \mathbf{b}(\cdot)[PT_{\xi+ai,\eta}\mathbf{b}(\cdot)]^{r_0}   \left\{\prod_{j=1}^n  T_{r_j,\eta} f_j\right\}(\xi+ai)    \  \right\rangle  \  , \quad
{\rm where \ } r_0+r_1+\cdots+r_n \ = \ m-n , \\  f_j(\cdot+ai)\in L^{p_j}_w(\mathbb{T}^d;\mathcal{L}(\mathbb{C}^d)), \  j=1,\dots, n,  \quad 
{\rm and \ } \sum_{j=1}^n \frac{1}{p_j} \ = \ \frac{|\rho|}{d} \ , \quad n\le |\rho|  \ .
\end{multline}
Proceeding as in (\ref{BL3}), (\ref{BM3}) and using Lemma 3.8 again, we see the expression in (\ref{BO3}) is bounded as
\begin{multline} \label{BP3}
\left\|\left\langle \  \mathbf{b}(\cdot)[PT_{\cdot+ai,\eta}\mathbf{b}(\cdot)]^{r_0}   \left\{\prod_{j=1}^n  T_{r_j,\eta} f_j\right\}(\cdot+ai)    \  \right\rangle \right\|_{d/|\rho|,w}  \\
\le M(1-\la)^{m+1}\left[     1+\frac{C_d|a|^2}{\eta}     \right]^{m-n} \prod_{j=1}^n (r_j+2)\|f_j(\cdot+ai)\|_{p_j,w} \ \ ,
\end{multline}
provided $|\rho|<d/2$.  Note that the constant $M$ in (\ref{BO3}) depends only on $d$. Now summing (\ref{BP3}) over all $m^{|\rho|}$ terms for the derivative (\ref{BH3}) and using (\ref{BN3}), we see that (\ref{BI3}) holds provided $|\rho|<d/2$. 

In the case when $d/2\le |\rho|<d$ we use an identity as in (\ref{BK3}). Thus we have  for the expression  (\ref{BO3}) the identity,
\begin{multline} \label{BQ3}
v_1^*\left\langle \  \mathbf{b}(\cdot)[PT_{\xi+ai,\eta}\mathbf{b}(\cdot)]^{r_0}   \left\{\prod_{j=1}^n  T_{r_j,\eta} f_j\right\}(\xi+ai)    \  \right\rangle v_2 \\
= \ \left[   \mathbf{b}(\cdot) \{PT_{\xi-ai,\eta}\mathbf{b}(\cdot)\}^{r_k} \left\{\prod_{j=k}^1  T_{r_{j-1},\eta} f_j\right\}(\xi-ai)   v_1,     \left\{\prod_{j=k+1}^n  T_{r_j,\eta} f_j\right\}(\xi+ai)   v_2\  \right]_{L^2(\Om;\mathbb{C}^d)} \ .
\end{multline}
We choose $k$ if possible such that
\be \label{BR3}
\sum_{j=1}^k \frac{1}{p_j} =\frac{1}{p} < \frac{1}{2} \ , \quad  \sum_{j=k+1}^n \frac{1}{p_j} =\frac{1}{q} < \frac{1}{2}  , \quad {\rm noting \ that \ } \ \quad \frac{1}{p}+\frac{1}{q} \ = \ \sum_{j=1}^n \frac{1}{p_j} \ = \ \frac{|\rho|}{d}<1 \ .
\ee
Applying the Schwarz inequality to the RHS of (\ref{BQ3}),  then taking the supremum over $\{v_1\in\mathbb{C}^d: \ |v_1|=1\}$ and summing  through an orthonormal basis of $v_2\in\mathbb{C}^d$, we obtain the inequality,
\begin{multline} \label{BS3}
\left\|\left\langle \  \mathbf{b}(\cdot)[PT_{\xi+ai,\eta}\mathbf{b}(\cdot)]^{r_0}   \left\{\prod_{j=1}^n  T_{r_j,\eta} f_j\right\}(\xi+ai)    \  \right\rangle\right\|_{\rm HS}  \\
\le \ (1-\la)^{r_k+1}\left[     1+\frac{C_d|a|^2}{\eta}     \right]^{r_k}\left\|\left\{\prod_{j=k}^1  T_{r_{j-1},\eta} f_j\right\}(\xi-ai)\right\|_{\rm HS}  \left\| \left\{\prod_{j=k+1}^n  T_{r_j,\eta} f_j\right\}(\xi+ai) \right\|_{\rm HS} \ .
\end{multline}
Let $f\in L_w^p(\mathcal{M};\mathbb{R})$ and $g\in L_w^q(\mathcal{M};\mathbb{R})$, where $1/p+1/q=1/r\le1$. Then
the function $fg$ is in  $L_w^r(\mathcal{M};\mathbb{R})$  and satisfies the inequality  $\|fg\|_{r,w}\le 2^{1/r}\|f\|_{p,w}\|g\|_{q,w}$.  It follows then from (\ref{BS3}) and Lemma 3.8 that the inequality (\ref{BP3}) continues to hold (with slight modification) when one can find $k$ satisfying (\ref{BR3}).  In the case when one cannot find $k$ satisfying (\ref{BR3}) there still exists $k$ such that
\be \label{BT3}
\sum_{j=1}^k \frac{1}{p_j}  < \frac{1}{2} \ , \quad  \sum_{j=1}^{k+1} \frac{1}{p_j} \ge \frac{1}{2} \ , \quad  \sum_{j=1}^n \frac{1}{p_j} \ = \ \frac{|\rho|}{d}<1 \ .
\ee
In the case of (\ref{BT3}) we factorize $f_{k+1}$ so $f_{k+1}(\zeta)=f^a_{k+1}(\zeta)f^b_{k+1}(\zeta), \ \zeta\in\mathbb{R}^d,$ in such a way that $f^a_{k+1}(\cdot-ai)$ is in $L^{p^a_{k+1}}_w(\mathbb{T}^d;\mathcal{L}(\mathbb{C}^d))$ 
and  $f^b_{k+1}(\cdot+ai)$ is in $L^{p^b_{k+1}}_w$,
where  
\be \label{BU3}
\sum_{j=1}^k \frac{1}{p_j} +\frac{1}{p^a_{k+1}}=\frac{1}{p} < \frac{1}{2} \ , \quad  \frac{1}{p^b_{k+1}} +\sum^{n}_{j=k+2} \frac{1}{p_j}=\frac{1}{q} < \frac{1}{2} \ , \quad \frac{1}{p}+\frac{1}{q}=\frac{|\rho|}{d} \ .
\ee
The LHS of (\ref{BQ3}) is equal to
\begin{multline} \label{BV3}
 \Bigg[   T_{r_k,\eta} f^a_{k+1} \left\{\prod_{j=k}^1  T_{r_{j-1},\eta} f_j\right\}(\xi-ai)   v_1,  \\
  T_{r_{k+1},\eta} f^b_{k+1}   \left\{\prod_{j=k+2}^n  T_{r_j,\eta} f_j\right\}(\xi+ai)   v_2\  \Bigg]_{L^2(\Om;\mathbb{C}^d)} \ .
\end{multline}
In view of (\ref{BU3}) we may apply the Schwarz inequality and Lemma 3.8 to (\ref{BV3}) to obtain an inequality like (\ref{BP3}). 

We proceed similarly to prove (\ref{BW3}).  We write
\begin{multline} \label{BX3}
|\De\xi|^{-(1-\nu)}\left[h_m(\xi+\De\xi+ai,\eta)-h_m(\xi+ai,\eta)\right] \\
 = \ \sum_{r=1}^m \left\langle \  \mathbf{b}(\cdot)[PT_{\xi+ai,\eta}\mathbf{b}(\cdot)]^{r-1} T_{m-r,\eta}f_{\nu,\De\xi}(\xi+\De\xi+ai) \  \right\rangle   \ , \\
{\rm where \ } f_{\nu,\De\xi}(\zeta) \ = \ \frac{f(\zeta)-f(\zeta-\De\xi)}{|\De\xi|^{1-\nu}} \ , \quad f(\zeta) \ = \ \frac{e(\zeta)e^*(\zeta)}{\eta+e(\zeta)^*e(\zeta)} \ .
\end{multline}
We then observe that for any  $\sig=[\sig_1,\dots,\sig_d]$, a  $d-$tuple of non-negative integers satisfying $|\sig|<d$, the derivative
\be \label{BY3}
  \prod_{j=1}^d \left(\frac{\pa}{\pa \zeta_j}\right)^{\sig_j} f_{\nu,\De\xi}(\zeta+ai) \ , \quad \zeta\in\mathbb{T}^d,
\ee 
is in $L^p_w(\mathbb{T}^d;\mathcal{L}(\mathbb{C}^d))$ with  $p=d/[|\sig|+1-\nu]$.  The norm of the function (\ref{BY3}) depends only  on $d$, and not on $|\De\xi|\le 1$ or  $\nu, \ 0\le\nu\le1$.  The argument then goes as in the previous paragraph,
noting only that the constant $M$ in (\ref{BW3}) diverges as $\nu\ra0$ since $d/(d-\nu)$ approaches $1$. 
\end{proof}
\begin{corollary}
Let $q(\cdot,\cdot)$ be the function defined by  (\ref{D3}).  There exists a constant $c_d$, depending only on $d$, such that for fixed $\eta$ with $0<\eta\le 1$, and $a\in\mathbb{R}^d$ satisfying $|a|^2<c_d\la \eta$  the derivatives of the function  $\xi\ra q(\xi+ai,\eta), \ \xi\in\mathbb{T}^d$, have the following properties: \\
If $|\rho|<d$ the derivative
\be \label{BZ3}
\prod_{j=1}^d \left(\frac{\pa}{\pa \xi_j}\right)^{\rho_j} q(\xi+ai,\eta)
\ee
is in $L^p_w(\mathbb{T}^d;\mathcal{L}(\mathbb{C}^d))$ with $p=d/|\rho|$, and its norm is bounded as
\be \label{CA3}
\left\|                 \prod_{j=1}^d \left(\frac{\pa}{\pa \xi_j}\right)^{\rho_j} q(\xi+ai,\eta)           \right\|_{p,w} \ \le \ \frac{M}{\la^{3|\rho|+2}}  \ , 
\ee
where the constant $M$ depends only on $d$. If $|\rho|=d-1$ then the fractional derivative 
\be \label{CB3}
\frac{1}{|\De\xi|^{1-\nu}}\prod_{j=1}^d \left(\frac{\pa}{\pa \xi_j}\right)^{\rho_j}\left[ q(\xi+\De\xi+ai,\eta)-q(\xi+ai,\eta)\right]
\ee
with $0<\nu\le 1$ and  $|\De\xi|\le 1$ is in $L^p_w(\mathbb{T}^d;\mathcal{L}(\mathbb{C}^d))$ with $p=d/(d-\nu)$, and its norm is bounded by the RHS of (\ref{CA3}) with  $\la^{3d+2}$ in place of $\la^{3|\rho|+2}$. The constant
$M$ depends now on $\nu>0$ as well as $d$. 
\end{corollary}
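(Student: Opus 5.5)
Since $q(\xi,\eta)=\langle\mathbf{a}\rangle-\sum_{m\ge1}h_m(\xi,\eta)$ by (\ref{D3}), and $\langle\mathbf{a}\rangle$ does not depend on $\xi$, I will sum the bounds of Proposition 3.1 over $m$. The derivative (\ref{BZ3}) equals $-\sum_m$ of the derivatives (\ref{BH3}). I then apply Proposition 3.1 to each term.

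The first difficulty is that the weak norm $\|\cdot\|_{p,w}$ is only quasi-subadditive. The bound (\ref{BN3}) loses a factor $N$, so it cannot be applied directly to an infinite sum. For $p=d/|\rho|>1$ I will use the standard fact that $L^p_w$ admits an equivalent genuine norm, $\sup_E \mu(E)^{-1+1/p}\int_E|g|$, which is comparable to $\|g\|_{p,w}$ with constants depending only on $p$, hence only on $d$. This norm is countably subadditive, so
\[
\Big\|\sum_m \partial^\rho h_m\Big\|_{p,w}\le C_d\sum_{m=1}^\infty \|\partial^\rho h_m\|_{p,w}.
\]
For the fractional case in (\ref{CB3}), $p=d/(d-\nu)>1$ as well, so the same device applies, with the constant now depending on $\nu$. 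Alternatively, I could sum dyadic blocks using (\ref{BN3}) with geometric weights.

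Next I sum the series. Proposition 3.1 bounds the $m$th term by $Mm^{3|\rho|}(1-\la)^{m+1}[1+C_d|a|^2/\eta]^m$. I choose the constant $c_d$ in the Corollary small enough, relative to the constant $C_d$ of Lemma 3.1, that $|a|^2<c_d\la\eta$ implies $C_d|a|^2/\eta\le \la/2$. Then
\[
(1-\la)\Big[1+\frac{C_d|a|^2}{\eta}\Big]\le (1-\la)(1+\la/2)\le 1-\frac{\la}{2}.
\]
The sum is then bounded by
\[
M\sum_m m^{3|\rho|}(1-\la/2)^m\le M'\,\Gamma(3|\rho|+1)(2/\la)^{3|\rho|+1},
\]
using $\sum_m m^k x^m\le C_k(1-x)^{-k-1}$. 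This is bounded by $M/\la^{3|\rho|+2}$, since $\la\le1$, and this slack also absorbs harmless extra factors. For the fractional derivative the same computation with $m^{3d}$ gives $\la^{-3d-2}$, after applying Proposition 3.1's statement for (\ref{BW3}) termwise.

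I also need to justify interchanging differentiation with the infinite sum. For fixed $\eta$ and $a$ in the region, each $h_m(\cdot+ai,\eta)$ is analytic. By Lemma 3.1 together with the factor $(1-\la)^m$ from $\mathbf{b}$, the series converges uniformly on compact subsets of a slightly larger complex strip, so term-by-term differentiation is legitimate. The identity then holds pointwise, and the weak-norm estimate passes to the sum via the countable subadditivity above. I expect the main obstacle to be this summation in weak spaces. It is resolved by the equivalent-norm argument for $p>1$, and here $p>1$ always holds since $|\rho|<d$ and $\nu>0$.
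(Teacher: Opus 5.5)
Your proof is correct. Its skeleton matches the paper's: sum the bounds of Proposition 3.1 over $m$, shrink $c_d$ so that $(1-\la)[1+C_d|a|^2/\eta]\le 1-\la/2$, and sum $\sum_m m^{3|\rho|}(1-\la/2)^m$.

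Where you differ is the device for summing an infinite series in $L^p_w$. The paper avoids normability altogether. It uses the weighted union bound $\|\sum_m g_m\|_{p,w}^p\le\sum_m\|g_m\|_{p,w}^p/\beta_m^p$, valid for any positive weights with $\sum_m\beta_m=1$, since $\{|\sum_m g_m|>\mu\}\subset\bigcup_m\{|g_m|>\beta_m\mu\}$. It then chooses the geometric weights $\beta_m=\la(1-\la/4)^{m-1}/4$. This works for every $p$, including $p\le 1$, and involves no $p$-dependent equivalence constant. The price is in $\la$: the weights contribute $\la^{-1-1/p}$, so the paper ends with $\la^{-3|\rho|-1-1/p}\le\la^{-3|\rho|-2}$.

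Your route instead uses that for $p>1$ the functional $\sup_E\mu(E)^{1/p-1}\int_E|g|$ is a genuine, countably subadditive norm equivalent to $\|\cdot\|_{p,w}$; the nontrivial comparison is exactly the paper's inequality (\ref{Q4}). You pay a factor $p/(p-1)$ instead. This is at most $d$ when $p=d/|\rho|$. It equals $d/\nu$ when $p=d/(d-\nu)$, which is harmless because the statement allows $\nu$-dependence there. In exchange you get the slightly better power $\la^{-3|\rho|-1}$.

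You also justify the term-by-term differentiation of $\sum_m h_m$ by locally uniform convergence of analytic functions, which the paper leaves implicit.
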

\begin{proof}
The result follows from Proposition 3.1 We need to  use the quasi sub-additivity of weak norms in a slightly more general  form than in (\ref{BN3}).  Let  $g_k:\mathcal{M}\ra\mathbb{R}, \ k=1,\dots,N,$ be measurable functions such that each $g_k\in L_w^p(\mathcal{M};\mathbb{R})$. Then  the function  $g=g_1+ g_2\cdots+ g_N$ is also in $L_w^p(\mathcal{M};\mathbb{R})$  and
\begin{multline} \label{CD3}
\|g\|_{p,w} \ \le \ \left\{ \sum_{m=1}^N \frac{\|g_m\|_{p,w}^p}{\beta_m^p}\right\}^{1/p} \quad  {\rm for \ any \ positive  \ sequence \ } \{\beta_m, \ m\ge 1\} \\
{\rm which  \ satisfies \ } \sum_{m=1}^N\beta_m \ = \ 1 \ .
\end{multline}
To prove (\ref{CA3}) we take $g_m$ as in (\ref{BH3}). From (\ref{BI3}) we have that $\|g_m\|_{p,w}\le Mm^{3|\rho|}(1-\la/2)^m$  provided $|a|^2\le \la\eta/2C_d$.  We choose now $\beta_m=\la(1-\la/4)^{m-1}/4$, whence
\be \label{CE3}
\left\{\sum_{m=1}^N \frac{\|g_m\|_{p,w}^p}{\beta_m^p}\right\}^{1/p} \ \le\ \frac{4M}{\la}\left\{ \sum_{m=1}^N  m^{3|\rho|p}(1-\la/4)^{mp} \right\}^{1/p} \ \le \ \frac{M_p}{\la^{3|\rho|+1+1/p}} \ ,
\ee
for some constant $M_p$ depending on $M,p$.  The inequality (\ref{CE3}) establishes (\ref{CA3}), and a similar argument the bound on (\ref{CB3}). 
\end{proof}
\begin{lem}
Let $q(\cdot,\cdot)$ be the function defined by  (\ref{D3}).  There exists a constant $c_d$, depending only on $d$, such that for fixed $\eta$ with $0<\eta\le 1$, and $a\in\mathbb{R}^d$ satisfying $|a|^2<c_d\la \eta$, the function  $q(\cdot,\cdot)$ satisfies the quadratic form inequalities,
\be \label{CF3}
\la|v|^2 \ \le \ \Re\left[v^*q(\xi+ai,\eta)v\right] \ \le \ \ |v|^2 \ ,  \quad \xi\in\mathbb{R}^d  ,  \ v\in\mathbb{C}^d  ,
\ee
\be \label{CG3}
\left|\Im\left[v^*q(\xi+ai,\eta)v\right]\right| \ \le \ |v|^2 \ ,  \quad \xi\in\mathbb{R}^d   ,  \ v\in\mathbb{C}^d  .
\ee
\end{lem}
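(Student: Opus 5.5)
The plan is to obtain both inequalities from a variational (energy) identity for $q$, and then to absorb the error that comes from complexifying $\xi$ into the slack that $T_{\xi,\eta}$ has away from its range. This slack exists because $\eta>0$. The factor $\la$ in the hypothesis $|a|^2<c_d\la\eta$ is what lets the absorption work with sharp constants.

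\emph{Step 1 (representation).} Sum the Neumann series in (\ref{D3}). For $\zeta=\xi+ai$ set $w(\zeta)=[I-PT_{\zeta,\eta}\mathbf{b}(\cdot)]^{-1}v$. This inverse exists because, by Lemma 3.1 and $\|\mathbf{b}\|\le 1-\la$, $\|PT_{\zeta,\eta}\mathbf{b}\|\le(1-\la)(1+C_d|a|^2/\eta)<1-\la/2$ once $c_d$ is small. Then $q(\zeta,\eta)v=\langle \mathbf{a}(\cdot)\rangle v-\langle \mathbf{b}(\cdot)(w-v)\rangle=\langle v-\mathbf{b}(\cdot)w\rangle$, using $w-v=PT_\zeta\mathbf{b}w$. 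Put $u=PT_\zeta\mathbf{b}w$, a mean-zero field, and note $\langle \mathbf{b}w\rangle=\langle \mathbf{b}(v+u)\rangle$. We then aim for the identity
\[
v^*q(\zeta,\eta)v=\big\langle (v+u)^*\mathbf{a}(v+u)\big\rangle+\big\langle u^*\mathbf{b}w\big\rangle-\big\langle u^*\mathbf{b}u\big\rangle-\big\langle u^*\mathbf{a}u\big\rangle+\dots,
\]
rearranged so that everything is expressed through $g=\mathbf{b}w$ and $u=PT_\zeta g$. The terms left over beyond $\langle(v+u)^*\mathbf{a}(v+u)\rangle$ should collapse to $[g,(T_\zeta-T_\zeta^*T_\zeta)g]$-type quantities.

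For real $\zeta$, $T_\zeta=f(\zeta-A)$ acts in the spectral representation of $A$ as multiplication by $e e^*/(\eta+|e|^2)$. Hence $0\le T-T^2\le T$ and $T-T^2=\eta|e|^2/(\eta+|e|^2)^2$ on the range of $e$. This yields $\Re v^*qv\ge\la|v+\langle u\rangle|^2=\la|v|^2$, since $\langle u\rangle=0$. The upper bound comes from the dual form, namely $1-q=\sum_m h_m$ with each $h_m\ge0$, or equivalently $v^*qv\le\langle v^*\mathbf{a}v\rangle+\dots\le|v|^2$. In the real case there is an additional nonnegative remainder $R_0=[g,(T-T^2)g]$ that we keep rather than discard.

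\emph{Step 2 (complex $\zeta$).} Redo the algebra with $T_\zeta$, which is no longer self-adjoint. The exact identity gains correction terms of the form $[g,(T_\zeta-T_{\Re\zeta})g]$ and $[g,(T_\zeta^*T_\zeta-T_{\Re\zeta}^2)g]$. Using the explicit multiplier formulas (\ref{F3})--(\ref{I3}), we bound these pointwise in the spectral variable $\theta=\Re\zeta-A$. The estimate to establish is
\[
|{\rm num}/{\rm denom}-ee^*/(\eta+|e|^2)|\le C_d\frac{|a|}{\sqrt\eta}\cdot\frac{\sqrt\eta\,|e|+|a|^2}{\eta+|e|^2},
\]
from which $|{\rm correction}|\le C_d(|a|^2/\eta)^{1/2}\,[g,(\eta/(\eta+|e|^2))g]$ plus terms that are quadratic in $|a|^2/\eta$. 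The slack $R_0$ controls $\eta|e|^2/(\eta+|e|^2)^2$. The complementary piece near $e=0$ is handled by $u=PT g$ itself, which is small there.

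\emph{Step 3 (absorption).} Then $\Re v^*qv\ge\la|v|^2+R_0-|{\rm corr}|$. We also need $\|g\|^2\lesssim(1-\la)^2\|w\|^2\lesssim\la^{-1}\,\Re(\dots)$ to bound the corrections, and it is exactly here that $|a|^2\le c_d\la\eta$ is used to make $|{\rm corr}|\le R_0$. The upper bound $\Re\le|v|^2$ is obtained symmetrically from the dual identity, with the remainder entering with the opposite sign.

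For (\ref{CG3}) we use $\Im v^*qv=\Im({\rm corrections})$, since the main terms are real. Bounding this by $C_d(|a|/\sqrt\eta)\|g\|^2$ and using $\|g\|^2\le|v|^2\la^{-1}(1-\la)^2$ gives a bound of at most $|v|^2$ under the hypothesis.

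\emph{Main obstacle.} I expect the hard part to be Step 2: organizing the non-self-adjoint algebra so that every correction is dominated by the specific nonnegative remainder $R_0$, uniformly in the spectral variable and with no loss of constants. If direct domination fails near $e=0$, I would first try splitting the spectral variable into the regions $|e|^2\le\eta$ and $|e|^2>\eta$, and using $\|T_\zeta-T_{\Re\zeta}\|\le C_d|a|/\sqrt\eta$ from (\ref{E3}) separately in each region.
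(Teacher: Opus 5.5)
Your Step 1 is essentially right: for real $\zeta$ the identity is $v^*qv=[v+u,\mathbf{a}(v+u)]+[Pg,(T-T^2)Pg]$, with $Pg$ in place of $g$. The absorption in Steps 2--3 fails, however, and not only near $e=0$.

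\textbf{Where Steps 2--3 break.} Your own multiplier estimate has first-order part $|a||e|/(\eta+|e|^2)$. The slack $R_0$ has multiplier $\eta|e|^2/(\eta+|e|^2)^2$, and only in the direction of $e$. The first exceeds the second both for $|e|<|a|$ and for $|e|>\eta/|a|$.
\begin{itemize}
\item The range $|e|>\eta/|a|$ is genuine, not an artifact of your estimate. For $|e|^2\gg\eta$, $T_\zeta$ is close to a rank-one \emph{oblique} projection whose range and kernel are tilted by an angle of order $|a|/|e|$. So $\Re T_\zeta-T_\zeta^*T_\zeta$ has entries of size $|a|/|e|$ coupling $e$ to $e^\perp$, while $R_0\approx\eta/|e|^2$ there. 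Hence $|{\rm corr}|\le R_0$ is false. The bound $C_d(|a|/\sqrt\eta)[g,\eta(\eta+|e|^2)^{-1}g]$ also does not follow from your estimate: it replaces $|a||e|$ by $|a|\sqrt\eta$.
\item At $e=0$ your estimate is wrong as well: $T_{ai}\approx-aa^*/\eta$ has size $|a|^2/\eta$, not $|a|^3/\eta^{3/2}$. The smallness of $u$ there does not help, because the correction is a quadratic form in $Pg$, not in $u$.
\item A first-order error cannot be paid for with $\|g\|^2\lesssim\la^{-1}|v|^2$. For (\ref{CG3}) this gives $C_d\sqrt{c_d}\,\la^{-1/2}(1-\la)^2|v|^2$, which is not $\le|v|^2$ uniformly in $\la$ when $c_d$ depends only on $d$. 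And since $\|g\|^2$ contains $|v|^2$, absorbing this way would also lose the sharp constant $\la$ in (\ref{CF3}).
\end{itemize}
To rescue your route you would have to split the off-diagonal first-order error by Cauchy--Schwarz against $R_0$, leaving an $O(|a|^2/\eta)\|Pg\|^2$ remainder. You would then absorb that remainder into $\la\|u\|^2+[v+u,(\mathbf{a}-\la I_d)(v+u)]$ using $Pg=(1-\la)u-P(\mathbf{a}-\la I_d)(v+u)$. None of this is in your plan.

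\textbf{The missing idea, which is how the paper argues.} Never compare $T_\zeta$ with $T_{\Re\zeta}$. Use only the \emph{second-order} bound $\|T_\zeta\|\le 1+C_d|a|^2/\eta\le 1+C_dc_d\la$ from (\ref{E3}), played against the contraction margin of $\mathbf{b}$.
\begin{enumerate}
\item Your $u$ is the paper's $\Psi v$: $u=PT_\zeta\mathbf{b}(v+u)$ and $\langle u\rangle=0$. Since $T_\zeta v=f(\zeta)v$ is constant, $[T_\zeta^*u,v]=0$.
\item Splitting $\mathbf{b}=(1-\la)I_d-(\mathbf{a}-\la I_d)$ in $\|u\|^2=[T_\zeta^*u,\mathbf{b}(v+u)]$ therefore gives $\|u\|^2=[T_\zeta^*u,\mathbf{b}u]-[T_\zeta^*u,(\mathbf{a}-\la I_d)v]$.
\item Apply Schwarz with the nonnegative weights $\mathbf{b}$ and $\mathbf{a}-\la I_d$, whose sum is $(1-\la)I_d$. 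This gives $\|u\|^2\le\frac{1-\la}{2}\|T_\zeta^* u\|^2+\frac12[u,\mathbf{b}u]+\frac12 v^*\langle\mathbf{a}-\la I_d\rangle v$.
\item Now $(1-\la)\|T_\zeta\|^2\le1$, which is exactly where $|a|^2<c_d\la\eta$ enters. This turns step 3 into $[u,\mathbf{a}u]\le v^*\langle\mathbf{a}-\la I_d\rangle v$.
\item Since $v^*qv-v^*\langle\mathbf{a}\rangle v=[v,(\mathbf{a}-\la I_d)u]$, one more Schwarz inequality gives $|v^*qv-v^*\langle\mathbf{a}\rangle v|\le v^*\langle\mathbf{a}-\la I_d\rangle v$. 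This yields the lower bound in (\ref{CF3}) and (\ref{CG3}) at once.
\item For the upper bound, apply the same Schwarz step directly to $\|u\|^2=[T_\zeta^*u,\mathbf{b}(u+v)]$. This gives $\Re[v,\mathbf{b}u]\ge-\frac12 v^*\langle\mathbf{b}\rangle v$, hence $\Re v^*qv=v^*\langle\mathbf{a}\rangle v-\Re[v,\mathbf{b}u]\le\frac12 v^*\langle I_d+\mathbf{a}\rangle v\le|v|^2$.
\end{enumerate}
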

\begin{proof}
We have from (2.5), (2.9) of \cite{cs1} that
\begin{multline} \label{CH3}
q(\xi,\eta) \ = \  \left\langle \mathbf{a}(\cdot)\right\rangle+ \left\langle \mathbf{a}(\cdot)\Psi(\xi,\eta,\cdot)\right\rangle \ ,  \\
{\rm where \ } \Psi(\xi,\eta,\cdot) \ = \  PT_{\xi,\eta}\left[\mathbf{b}(\cdot)\Psi(\xi,\eta,\cdot)\right]+ PT_{\xi,\eta}\mathbf{b}(\cdot) \ .
\end{multline}
 It follows from (\ref{CH3}) that
\begin{multline} \label{CI3}
\left[ \Psi(\xi,\eta,\cdot)v, \  \Psi(\xi,\eta,\cdot)v\right]_{L^2(\Om;\mathbb{C}^d)} \ = \ 
\left[  T^*_{\xi,\eta} \Psi(\xi,\eta,\cdot)v,  \ \mathbf{b}(\cdot)\Psi(\xi,\eta,\cdot)v      \right]_{L^2(\Om;\mathbb{C}^d)} \\
-  \left[  T^*_{\xi,\eta} \Psi(\xi,\eta,\cdot)v, \ \left\{\mathbf{a}(\cdot)-\la I_d\right\}v     \right]_{L^2(\Om;\mathbb{C}^d)}  \ ,
\end{multline}
where $T^*_{\xi,\eta}$ is the adjoint of the operator $T_{\xi,\eta}$ on  $L^2(\Om;\mathbb{C}^d)$.  In (\ref{CI3}) we are using the fact that $P\Psi(\xi,\eta,\cdot)=\Psi(\xi,\eta,\cdot)$, whence $\langle \ \Psi(\xi,\eta,\cdot) \ \rangle=0$. 

We apply the Schwarz inequality to the RHS of (\ref{CI3}), using the fact that the matrices $\mathbf{b}(\om)$ and $\mathbf{a}(\om)-\la I_d$ are symmetric positive definite.  Thus we obtain the inequality
\begin{multline} \label{CJ3}
\left[ \Psi(\xi,\eta,\cdot)v, \  \Psi(\xi,\eta,\cdot)v\right]_{L^2(\Om;\mathbb{C}^d)}   \\
 \le \ \frac{1}{2}  \left[  T^*_{\xi,\eta} \Psi(\xi,\eta,\cdot)v,  \ \mathbf{b}(\cdot) T^*_{\xi,\eta} \Psi(\xi,\eta,\cdot)v      \right]_{L^2(\Om;\mathbb{C}^d)}  +
 \frac{1}{2}  \left[  \Psi(\xi,\eta,\cdot)v,  \ \mathbf{b}(\cdot)  \Psi(\xi,\eta,\cdot)v      \right]_{L^2(\Om;\mathbb{C}^d)}  \\
 + \ \frac{1}{2}  \left[  T^*_{\xi,\eta} \Psi(\xi,\eta,\cdot)v,  \ \left\{\mathbf{a}(\cdot)-\la I_d\right\} T^*_{\xi,\eta} \Psi(\xi,\eta,\cdot)v      \right]_{L^2(\Om;\mathbb{C}^d)} 
 +\frac{1}{2}v^*\left\langle \left\{\mathbf{a}(\cdot)-\la I_d\right\}\right\rangle v \\
 = \    \  \frac{1-\la}{2}  \left[  T^*_{\xi,\eta} \Psi(\xi,\eta,\cdot)v,  \  T^*_{\xi,\eta} \Psi(\xi,\eta,\cdot)v      \right]_{L^2(\Om;\mathbb{C}^d)}  \\
 + \   \frac{1}{2}  \left[  \Psi(\xi,\eta,\cdot)v,  \ \mathbf{b}(\cdot)  \Psi(\xi,\eta,\cdot)v      \right]_{L^2(\Om;\mathbb{C}^d)} +\frac{1}{2}v^*\left\langle \left\{\mathbf{a}(\cdot)-\la I_d\right\}\right\rangle v \ .
\end{multline}
We have from Lemma 3.1 that  $\|T_{\xi+ai,\eta}\|_{L^2(\Om;\mathbb{C}^d)}\le 1+c_dC_d\la$, whence  on choosing $c_d<1/3C_d$ and replacing $\xi$ by $\xi+ai$ in (\ref{CJ3}), we obtain  the inequality,
\begin{multline} \label{CK3}
\left[ \Psi(\xi+ai,\eta,\cdot)v, \  \Psi(\xi+ai,\eta,\cdot)v\right]_{L^2(\Om;\mathbb{C}^d)}  \\
 \le \    \left[  \Psi(\xi+ai,\eta,\cdot)v,  \ \{I_d-\mathbf{a}(\cdot) \} \Psi(\xi+ai,\eta,\cdot)v      \right]_{L^2(\Om;\mathbb{C}^d)} +v^*\left\langle \left\{\mathbf{a}(\cdot)-\la I_d\right\}\right\rangle v \ .
\end{multline}
It follows from (\ref{CK3}) that
\be \label{CL3}
\left[ \Psi(\xi+ai,\eta,\cdot)v, \  \mathbf{a}(\cdot)\Psi(\xi+ai,\eta,\cdot)v\right]_{L^2(\Om;\mathbb{C}^d)}   \ \le \ v^*\left\langle \left\{\mathbf{a}(\cdot)-\la I_d\right\}\right\rangle v \ .
\ee
We then have from (\ref{CH3}),  (\ref{CL3}) that 
\be \label{CM3}
|v^*q(\xi+ai,\eta)v-v^*\langle\mathbf{a}(\cdot)\rangle v | \ = \  |v^*\langle\{\mathbf{a}(\cdot)-\la I_d\}\Psi(\xi+ai,\eta,,\cdot)\rangle v| \ \le \  v^*\left\langle \left[\mathbf{a}(\cdot)-\la I_d\right]\right\rangle v \ .
\ee
We see from (\ref{CM3}) that the lower bound in (\ref{CF3}) and also (\ref{CG3}) hold.

To prove the upper bound in (\ref{CF3}) we observe from (\ref{CH3}) and the Schwarz inequality  that
\begin{multline} \label{CN3}
\left[ \Psi(\xi,\eta,\cdot)v, \  \Psi(\xi,\eta,\cdot)v\right]_{L^2(\Om;\mathbb{C}^d)} \ = \ 
\left[  T^*_{\xi,\eta} \Psi(\xi,\eta,\cdot)v,  \ \mathbf{b}(\cdot)\{\Psi(\xi,\eta,\cdot)+I_d\}v      \right]_{L^2(\Om;\mathbb{C}^d)}  \\
\le \ \frac{1}{2} \left[  T^*_{\xi,\eta} \Psi(\xi,\eta,\cdot)v,  \ \mathbf{b}(\cdot) T^*_{\xi,\eta}  \Psi(\xi,\eta,\cdot)v      \right]_{L^2(\Om;\mathbb{C}^d)} +
\  \frac{1}{2}\left[  \{\Psi(\xi,\eta,\cdot)+I_d\}v,  \ \mathbf{b}(\cdot)\{\Psi(\xi,\eta,\cdot)+I_d\}v      \right]_{L^2(\Om;\mathbb{C}^d)}  \\
\le \frac{1-\la}{2} \left[  T^*_{\xi,\eta} \Psi(\xi,\eta,\cdot)v,  \ T^*_{\xi,\eta}  \Psi(\xi,\eta,\cdot)v      \right]_{L^2(\Om;\mathbb{C}^d)} +
\  \frac{1}{2}\left[  \{\Psi(\xi,\eta,\cdot)+I_d\}v,  \ \mathbf{b}(\cdot)\{\Psi(\xi,\eta,\cdot)+I_d\}v      \right]_{L^2(\Om;\mathbb{C}^d)}  \ .
\end{multline}
Replacing $\xi$ in (\ref{CN3}) by $\xi+ai$ and  again choosing $c_d<1/3C_d$ we conclude that
\be \label{CO3}
\Re\left[v^*\left\langle \mathbf{b}(\cdot)\Psi(\xi+ai, \eta,,\cdot)     \right\rangle v \right]+\frac{1}{2} v^*\left\langle \mathbf{b}(\cdot)    \right\rangle v  \ \ge \ 0 \ .
\ee
Now (\ref{CO3}) implies that
\be \label{CP3}
\Re\left[v^*q(\xi+ai,\eta)v\right] \ = \ v^*\left\langle \mathbf{a}(\cdot)    \right\rangle v-\Re\left[v^*\left\langle \mathbf{b}(\cdot)\Psi(\xi+ai,\eta,,\cdot)     \right\rangle v \right] \ \le \  \frac{1}{2} v^*\left\langle  I_d+\mathbf{a}(\cdot)    \right\rangle v \ ,
\ee
whence the upper bound  in (\ref{CF3}) follows. 
\end{proof}
Lemma  3.9 implies that the function $\xi\ra q(\xi+ai,\eta)$ is bounded independent of $\eta, \ 0<\eta\le 1$, and  Corollary 3.1 indicates that the function has weak differentiability properties.  We state a hypothesis analogous to Hypothesis 3.1
of \cite{cs1}, which was established for various environments  in that paper:
\begin{hyp}
For some $\al>0$ and constant $C_\al$  the function $q(\cdot,\cdot)$  satisfies an inequality
\be \label{CQ3}
\| q(\xi,\eta)-q(\xi',\eta') \|_{\mathcal{L}(\mathbb{C}^d)} \ \le \  C_\al\left[ |\xi-\xi'|^\al+|\eta-\eta'|^{\al/2}\right] \  , \quad \xi,\xi'\in\mathbb{C}^d, \ |\Im\xi|,|\Im\xi'|\le c_d\sqrt{\la\eta} \ ,
\ee
where $c_d$ depends only on $d$. 
\end{hyp}

\vspace{.1in}
\section{Configuration space estimates from Fourier space estimates}
It was already shown in \cite{cn1,cn2} that the averaged Green's function $G_{\mathbf{a},\eta}(x), \ x\in\mathbb{Z}^d$,  has the Fourier inverse representation,
\be \label{A4}
G_{\mathbf{a},\eta}(x)  \ = \  \frac{1}{(2\pi)^d}\int_{\mathbb{T}^d} \frac{e^{-ix\cdot\xi} }{\eta+e(\xi)^*q(\xi,\eta)e(\xi)} \ d\xi \ , \quad x\in\mathbb{Z}^d \ ,
\ee
where $q(\cdot,\cdot)$ is the function (\ref{D3}).  We see from (\ref{A4}) that the second derivative of $G_{\mathbf{a},\eta}(\cdot)$ is   given by the formula,
\be \label{B4}
\na\na^*G_{\mathbf{a},\eta}(x)  \ = \  \frac{1}{(2\pi)^d}\int_{\mathbb{T}^d} \frac{e(\xi)e(\xi)^*e^{-ix\cdot\xi} }{\eta+e(\xi)^*q(\xi,\eta)e(\xi)} \ d\xi \ , \quad x\in\mathbb{Z}^d \ .
\ee
One may then conclude  from the representation (\ref{B4}), Corollary 3.1 and Lemma 3.9 that $\na\na^*G_{\mathbf{a},\eta}(\cdot)$ satisfies an inequality
\be \label{C4}
\left|            \na\na^*G_{\mathbf{a},\eta}(x)         \right| \ \le \   \frac{Ce^{-\ga\sqrt{\eta}|x|}}{(1+|x|)^{d-\nu}} \ , \quad x\in\mathbb{Z}^d \ ,  \ 0<\eta\le 1, \ \nu>0 \ ,
\ee
for constants $C,\ga$ depending only on $d,\la,\nu$.  Note that the constants in (\ref{C4})  do not depend on $\eta$, so we may let $\eta\ra0$. 

The inequality (\ref{C4}) was established in \cite{cn2, cs1}. Our first goal here is to generalize this.  We consider a  function $F(\xi,\eta)$ which has the form
\be \label{D4}
F(\xi,\eta) \ = \ \frac{P_k(q(\xi,\eta), e(\xi),e(\xi)^*)}{[\eta+e(\xi)^*q(\xi,\eta)e(\xi)]^k}  \ ,
\ee
where $k$ is a positive integer and $P_k$ is a homogeneous polynomial of degree $2k$ in the variables $e_j(\xi),e^*_j(\xi), \ j=1,\dots,d$ with coefficients being products of entries of the $d\times d$ matrix $q(\xi,\eta)$.. 
\begin{lem}
The Fourier inverse $x\ra\hat{F}(x,\eta), \ x\in\mathbb{Z}^d,$ of the function (\ref{D4}) satisfies an inequality,
\be \label{E4}
| \hat{F}(x,\eta) | \ \le \   \frac{C_{d,\nu}e^{-c_d\sqrt{\la\eta}|x|}}{(1+|x|)^{d-\nu}} \ , \quad x\in\mathbb{Z}^d \ ,  \ 0<\eta\le 1, \ \nu>0 \ ,
\ee
for constants $C_{d,\nu}$ depending only  on $d,\nu$ and $c_d>0$ on $d$.  
\end{lem}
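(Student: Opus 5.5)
The plan is to separate the exponential factor from the algebraic decay by shifting the contour of integration, and then to obtain the algebraic decay by integrating by parts $d-1$ times plus a fractional derivative, using the weak-$L^p$ bounds of Corollary 3.1.

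\textbf{Contour shift.} For $x\in\mathbb{Z}^d$, $x\ne0$, choose $a=-c\sqrt{\la\eta}\,x/|x|$, with $c$ small enough that $|a|^2<c_d\la\eta$ as required in Corollary 3.1 and Lemma 3.9. The function $\xi\ra F(\xi,\eta)$ is periodic and analytic in the strip $|\Im\xi|^2<c_d\la\eta$. Analyticity of $q$ there follows from Lemma 3.1 and the uniform convergence of the series (\ref{D3}) once $|a|^2\le\la\eta/2C_d$. Cauchy's theorem then gives
\[
\hat F(x,\eta)=e^{-c\sqrt{\la\eta}|x|}\frac{1}{(2\pi)^d}\int_{\mathbb{T}^d}F(\xi+ai,\eta)e^{-ix\cdot\xi}\,d\xi .
\]
So it is enough to prove the bound $|\,\cdot\,|\le C(1+|x|)^{-(d-\nu)}$ for the Fourier coefficients of $\xi\ra F(\xi+ai,\eta)$, uniformly in $a$ and $\eta$.

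\textbf{Denominator.} Write $D=\eta+e(\xi+ai)^*q(\xi+ai,\eta)e(\xi+ai)$. By Lemma 3.9 and the computation (\ref{G3})--(\ref{I3}) in the proof of Lemma 3.1, $\Re D\ge c[\eta+\la|e(\Re\xi)|^2]$ when $c_d$ is small, while the numerator $P_k$ is bounded by $C[|e(\Re\xi)|^2+|a|^2]^k$. Hence $F(\cdot+ai)$ is bounded. Each $\xi$-derivative of $F$ has one of two forms: it lands on $e$ (or on $D$ through $e$), or it lands on $q$. In the first case it costs at most a factor $(\sqrt\eta+|\xi|)^{-1}$. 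Thus $\pa^\sig F(\cdot+ai)$ is a finite sum of products of the form
\[
\prod_j \pa^{\rho^{(j)}}q(\cdot+ai,\eta)\times O\big((\sqrt\eta+|\xi|)^{-s}\big),\qquad s+\sum_j|\rho^{(j)}|=|\sig| .
\]
By Corollary 3.1, each factor $\pa^{\rho^{(j)}}q$ lies in $L_w^{d/|\rho^{(j)}|}$, and $(\sqrt\eta+|\xi|)^{-s}\in L_w^{d/s}$ uniformly in $\eta$. Using the weak Hölder inequality $\|fg\|_{r,w}\le 2^{1/r}\|f\|_{p,w}\|g\|_{q,w}$ noted in the proof of Proposition 3.1, we get $\pa^\sig F(\cdot+ai)\in L^{d/|\sig|}_w$ for $|\sig|<d$, with norm depending only on $d$ and $\la$.

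\textbf{Integration by parts and the fractional step.} To extract the decay, take $|\sig|=d-1$, with all derivatives in a coordinate direction $j$ for which $|x_j|\ge|x|/\sqrt d$. Integrating by parts gives
\[
|x_j|^{d-1}|\hat F|\le\big|\widehat{\pa^\sig F}(x)\big| .
\]
Now apply the difference-quotient trick with $\De\xi=\pi e_j/x_j$:
\[
\widehat{g}(x)=\tfrac12\,\widehat{[g-g(\cdot+\De\xi)]}(x),
\]
which gives
\[
|\widehat{g}(x)|\le C|x|^{-(1-\nu')}\,\Big\||\De\xi|^{-(1-\nu')}\big[g(\cdot+\De\xi)-g\big]\Big\|_{L^1}.
\]
The quantity on the right is controlled by the fractional statement (\ref{CB3}) together with the corresponding fractional bound for the explicit $e$-dependent factors, which are handled as in (\ref{BY3}). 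This gives a function in $L_w^{d/(d-\nu')}$, and since $d/(d-\nu')>1$ on a finite-measure space it embeds into $L^1$.

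Combining the steps yields $|x|^{d-1}|x|^{1-\nu'}|\hat F|\le C$, which is (\ref{E4}) with $\nu=\nu'$. The constant depends on $\la$ through Corollary 3.1, and the constant $c_d$ in the exponent comes from the contour shift.

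\textbf{Main obstacle.} The hardest step is the fractional difference when the derivatives are distributed over products. The Leibniz-type expansion of the difference quotient of a product must place the fractional increment on one factor while the other factors stay in the weak spaces with exponents still summing to at most $1$. This requires tracking the shifted factors $g(\cdot+\De\xi)$, whose weak norms are translation-invariant. It also requires handling the near-singular factors $(\sqrt\eta+|\xi|)^{-s}$ uniformly in $\eta$, and I would verify their fractional bounds directly by scaling.
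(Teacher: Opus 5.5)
Your proposal is correct and follows the paper's proof of this lemma essentially step by step. You shift the contour to $\xi+ai$ with the denominator controlled as in (\ref{I4}), integrate by parts $d-1$ times in a coordinate with $|x_j|\ge|x|/\sqrt{d}$, and then make the half-period translation $\De\xi\cdot x=\pi$ with a telescoping Leibniz expansion; this is the paper's $J_0,\dots,J_{d-1}$ decomposition, with the $q$-dependence of the coefficient split off as the $A_1$ term in (\ref{W4}), closed by weak-space H\"older and (\ref{Q4}).

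The differences are cosmetic: you stay in weak spaces throughout instead of using strong H\"older with $A\in L^{p'}$, and you use $(\sqrt\eta+|\xi|)^{-s}$ in place of $|\xi|^{-(d-1-r)}$. One caution: when the increment lands on a factor $\pa^{r}q$ with $r<d-1$, you need the fractional bound at order $r$. That is not literally (\ref{CB3}), but it follows from the same argument via (\ref{BY3}), exactly as the paper implicitly uses it.
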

\begin{proof}  We may obtain from Lemma 3.9  upper and lower bounds on the denominator in (\ref{D4}) with $\xi$ replaced by $\xi+ai$. The analytic continuation of the function $\xi\ra e(\xi)^*q(\xi,\eta)e(\xi), \ \xi\in\mathbb{R}^d,$
to its translation by $ai\in\mathbb{C}^d$ is  the function $\xi\ra e(\xi-ai)^*q(\xi+ai,\eta)e(\xi+ai), \ \xi\in\mathbb{R}^d$. We write
\begin{multline} \label{F4}
 e(\xi-ai)^*q(\xi+ai,\eta)e(\xi+ai) \ = \  e(\xi)^*q(\xi+ai,\eta)e(\xi) \\
 +\{e(\xi-ai)-e(\xi)\}^*q(\xi+ai,\eta)e(\xi) +e(\xi)^*q(\xi+ai,\eta)\{e(\xi+ai)-e(\xi)\} \\
 + \{e(\xi-ai)-e(\xi)\}^*q(\xi+ai,\eta)\{e(\xi+ai)-e(\xi)\} \ . 
\end{multline}
We consider  for $A\in\mathcal{L}(\mathbb{C}^d)$   the self-adjoint matrices $\Re A=(A+A^*)/2, \Im A=(A-A^*)/2i$ so $A=\Re A+i\Im A$.  We have from  Lemma 3.9 that  if $A=q(\xi+ai,\eta)$, then 
\be \label{G4}
\la I_d \ \le \ \Re A \ \le \ I_d \ , \quad -I_d \ \le \  \Im A \ \le I_d \ ,
\ee
whence it follows that $\|A\|\le 2$.  We conclude from (\ref{G4}) that
\be \label{H4}
\left|\{e(\xi-ai)-e(\xi)\}^*q(\xi+ai,\eta)e(\xi) \right| \ \le \  4|a||e(\xi)| \  \le \ \frac{16|a|^2}{\la} + \frac{\la |e(\xi)|^2}{4} \ , \quad \xi\in \mathbb{R}^d \ ,
\ee
provided $|a|<1$, with a similar inequality for the third term on the RHS of (\ref{F4}).  We conclude from (\ref{F4})-(\ref{H4}) that
\begin{multline} \label{I4}
\frac{1}{2}[\eta+ \la|e(\xi)|^2] \ \le \ \eta+\Re\{ e(\xi-ai)^*q(\xi+ai,\eta)e(\xi+ai) \} \ \le \ \{\eta+|e(\xi)|^2\}+\frac{1}{2}[\eta+ \la|e(\xi)|^2]  \  , \\
\left|    \Im\{ e(\xi-ai)^*q(\xi+ai,\eta)e(\xi+ai) \} \                           \right| \ \le \    |e(\xi)|^2+ \frac{1}{2}[\eta+ \la|e(\xi)|^2]  \  , 
\end{multline}
provided the constant $c_d$ in Lemma 3.9   is sufficiently small.
   
Analytically continuing the function $\xi\ra F(\xi,\eta), \ \xi\in\mathbb{R}^d$,  to $\mathbb{C}^d$, we obtain the representation
\be \label{J4}
\hat{F}(x,\eta) \ = \ \frac{e^{a\cdot x}}{(2\pi)^d}\int_{\mathbb{T}^d}  F(\xi+ai,\eta) e^{-i\xi\cdot x} \  d\xi \ ,   \quad x\in\mathbb{Z}^d  \ .
\ee
The analytic continuation of the numerator  in (\ref{D4}) by a translation $ai$ is bounded as
\be \label{K4}
\left|   P_k(q(\xi+ai,\eta),e(\xi+ai),e(\xi-ai)^*)                      \right| \ \le \  C\{\eta+|e(\xi)|^2\}^k \ , \quad \xi\in\mathbb{R}^d \ ,
\ee
for some constant $C$.  Now (\ref{I4}), (\ref{K4}) imply that $\sup_{\xi\in\mathbb{T}^d}|F(\xi+ai, \eta)|$ is bounded, whence the integral in (\ref{J4}) is bounded by a constant independent of $\eta, \ 0<\eta\le 1$. 
On choosing $a$ in (\ref{J4}) appropriately, with $a$ satisfying the condition of Lemma 3.9, we obtain the exponential falloff in (\ref{E4}).

To obtain the polynomial decay in $(1+|x|)$ we assume wlog that $|x_1|\ge |x|/d$.
Using integration by parts we write the integral in (\ref{J4}) as
\be \label{M4}
I_{d-1}(x) \ = \ \frac{1}{(ix_1)^{d-1}}\int_{\mathbb{T}^d} e^{-i\xi\cdot x} \left(\frac{\pa}{\pa\xi_1}\right)^{d-1}F(\xi+ai,\eta) \  d\xi  \ . 
\ee
We have that
\be \label{N4}
\left(\frac{\pa}{\pa\xi_1}\right)^{d-1}F(\xi+ai,\eta)  \ = \ P\left(  \xi+ai,\eta,\frac{\pa q(\xi+ai,\eta)}{\pa\xi_1}, \dots,      \frac{\pa^{d-1} q(\xi+ai,\eta)}{\pa\xi_1^{d-1}}                           \right) \ ,
\ee
where $P$ is a polynomial in $\pa q(\xi,\eta)/\pa\xi_1,\dots,\pa ^{d-1}q(\xi,\eta)/\pa\xi_1^{d-1}$, with coefficients depending on $\xi+ai,\eta, q(\xi+ai,\eta)$.  A typical term in the polynomial has the form
\be \label{O4}
A(\xi+ai,\eta)\prod_{j=1}^{d-1} \frac{\pa ^{r_j}q_{m_j,n_j}(\xi+ai,\eta)}{\pa\xi_1^{r_j}} \ , \quad {\rm with \ } r_1+\cdots+r_{d-1}=r \ \le  \ d-1 \ ,
\ee
where the $r_j$ are non-negative integers.  One also sees that
\be \label{P4}
|A(\xi+ai,\eta)| \ \le \ \frac{C}{|\xi|^{d-1-r}} \ , \quad \xi\in\mathbb{T}^d \ .
\ee

To bound the integral on  $\mathbb{T}^d$ 
of the term (\ref{O4}) satisfying (\ref{P4}), we use the fact that if  $f_j\in L^{p_j}_w(\mathcal{M};\mathbb{R}), \ j=1,\dots,N,$ with 
 $1/p_1+\cdots+1/p_N=1/p\le1$, then $\prod_{j=1}^N f_j$ is in $L^p_w(\mathcal{M};\mathbb{R})$ and   $\left\|\prod_{j=1}^N f_j\right\|_{p,w}\le N^{1/p}\prod_{j=1}^N\|f_j\|_{p_j,w}$.  Then we apply a classical
 inequality for functions in weak spaces: Let  $g\in L^p_w(\mathcal{M};\mathbb{R})$ with $p>1$ and $E\subset \mathcal{M}$ a measurable set. 
 Then
 \be \label{Q4}
 \int_E |g| \ \le \  \frac{p}{p-1}\|g\|_{p,w}m(E)^{1-1/p} \ .
 \ee
 From Corollary 3.1 we have that the function $\xi\ra \pa^{r_j} q(\xi+ai)/\pa\xi_1^{r_j}$  is in $L^{d/r_j}_w(\mathbb{T}^d;\mathcal{L}(\mathbb{C}^d)),     \ j=1,\dots,d-1$, whence the product of the derivatives in (\ref{O4}) is in
 $L^{d/r}_w(\mathbb{T}^d;\mathbb{C})$. From the H\"{o}lder inequality we have that for any $p\ge 1$ and  $1/p+1/p'=1$, 
 \begin{multline} \label{R4}
 \int_{\mathbb{T}^d} \left|     A(\xi+ai,\eta)\prod_{j=1}^{d-1} \frac{\pa ^{r_j}q_{m_j,n_j}(\xi+ai,\eta)}{\pa\xi_1^{r_j}}                   \right| \ d\xi \\
  \le \  \left\{     \int_{\mathbb{T}^d} \left|     A(\xi+ai,\eta)\right|^{p'}    \ d\xi                 \right\}^{1/p'}   \left\{     \int_{\mathbb{T}^d} \left|   \prod_{j=1}^{d-1} \frac{\pa ^{r_j}q_{m_j,n_j}(\xi+ai,\eta)}{\pa\xi_1^{r_j}}  \right|^{p}    \ d\xi                 \right\}^{1/p} \ .
 \end{multline}
The first integral on the  RHS of (\ref{R4}) is bounded provided $1/p'>(d-1-r)/d$, while the second integral is bounded  provided  $1/p >r/d$. Since $r\le d-1$ we are able to choose $p>1$ which satisfies both these inequalities.  It follows that
$|I_{d-1}(x)|\le C/(1+|x|)^{d-1}, \ x\in\mathbb{Z}^d$.  We have proven (\ref{E4}) for $\nu=1$. 

 To show that (\ref{E4}) continues to hold for $0<\nu\le 1$ 
 we do a fractional integration by parts in (\ref{M4}) and use the inequality for  (\ref{CB3}) given by Corollary 3.1. We choose $\De\xi\simeq 1/(1+|x|)$ such that $\De\xi\cdot x=\pi$. We have then that
 \begin{multline} \label{S4}
 2 I_{d-1}(x) \ = \ \frac{1}{(ix_1)^{d-1}}\int_{\mathbb{T}^d} e^{-i\xi\cdot x}  \frac{\pa^{d-1} F(\xi+ai,\eta)}{\pa\xi_1^{d-1}} \  d\xi  \\
 - \frac{1}{(ix_1)^{d-1}}\int_{\mathbb{T}^d} e^{-i\xi\cdot x}  \frac{\pa^{d-1} F(\xi+\De\xi+ai,\eta)}{\pa\xi_1^{d-1}} \  d\xi  \ .
 \end{multline}
 Since $\De\xi\simeq 1/(1+|x|)$  we have from (\ref{S4}) that
 \be\label{T4}
 \left|I_{d-1}(x)\right| \ \le \  \frac{C}{|x|^{d-\nu}} \int_{\mathbb{T}^d}  \left|  \frac{1}{|\De\xi|^{1-\nu} } \left\{\frac{\pa^{d-1} F(\xi+ai,\eta)}{\pa\xi_1^{d-1}}-\frac{\pa^{d-1} F(\xi+\De\xi+ai,\eta)}{\pa\xi_1^{d-1}} \right\}                                \right| \ d\xi  \ .
 \ee
 The integrand in (\ref{S4}) consists of a sum of terms of the form
 \begin{multline} \label{U4}
A(\xi+ai,\eta)\prod_{j=1}^{d-1} \frac{\pa ^{r_j}q_{m_j,n_j}(\xi+ai,\eta)}{\pa\xi_1^{r_j}} -A(\xi+\De\xi+ai,\eta)\prod_{j=1}^{d-1} \frac{\pa ^{r_j}q_{m_j,n_j}(\xi+\De\xi+ai,\eta)}{\pa\xi_1^{r_j}} \\
= \ 
\sum_{k=0}^{d-1} J_k(\xi+ai,\eta,\De\xi) \  ,
\end{multline}
 where
 \begin{multline} \label{V4}
 J_0(\xi+ai,\eta,\De\xi)  \ = \  \{A(\xi+ai,\eta)-A(\xi+\De\xi+ai,\eta)\}\prod_{j=1}^{d-1} \frac{\pa ^{r_j}q_{m_j,n_j}(\xi+ai,\eta)}{\pa\xi_1^{r_j}}  \ , \\
  {\rm For \ } k\ge 1, \quad J_k(\xi+ai,\eta,\De\xi)  \ = \  A(\xi+\De\xi+ai,\eta)\prod_{j=1}^{k-1} \frac{\pa ^{r_j}q_{m_j,n_j}(\xi+\De\xi+ai,\eta)}{\pa\xi_1^{r_j}}  \   \\
\times  \    \left\{         \frac{\pa ^{r_k}q_{m_j,n_j}(\xi+ai,\eta)}{\pa\xi_1^{r_k}}         -     \frac{\pa ^{r_k}q_{m_j,n_j}(\xi+\De\xi+ai,\eta)}{\pa\xi_1^{r_k} }                         \right\}
\prod_{j=k+1}^{d-1} \frac{\pa ^{r_j}q_{m_j,n_j}(\xi+ai,\eta)}{\pa\xi_1^{r_j}}  \ .
 \end{multline}
 To bound the integral of $J_0$ in (\ref{V4}) we write 
 \begin{multline}\label{W4}
     A(\xi+ai,\eta)-A(\xi+\De\xi+ai,\eta)     \\
      = \ A_0(\xi+ai,\eta,\De\xi) + {\rm Tr} \left[A_1(\xi+ai,\eta,\De\xi)\left\{       q(\xi+ai,\eta)-q(\xi+\De\xi+ai,\eta)               \right\} \right] \  .
\end{multline}
 The function $A_0(\xi+ai,\eta,\De\xi)$ and the $d\times d$ matrix $A_1(\xi+ai,\eta,\De\xi)$ satisfy inequalities similar to (\ref{P4}).   Thus we have that
 \be \label{X4}
 \begin{array}{ccc}
 |A_0(\xi+ai,\eta,\De\xi)| \ &\le& \ C\left[  \frac{1}{|\xi|^{d-1-r}} +  \frac{1}{|\xi+\De\xi|^{d-1-r}} \right]\ , \quad \xi\in\mathbb{T}^d \ , \\
|A_0(\xi+ai,\eta,\De\xi)| \ &\le& \ C|\De\xi|\left[  \frac{1}{|\xi|^{d-r}} +  \frac{1}{|\xi+\De\xi|^{d-r}} \right]\ , \quad \xi\in\mathbb{T}^d \ , \\
|A_1(\xi+ai,\eta,\De\xi)| \ &\le& \ C\left[  \frac{1}{|\xi|^{d-1-r}} +  \frac{1}{|\xi+\De\xi|^{d-1-r}} \right]\ , \quad \xi\in\mathbb{T}^d \ .
\end{array}
\ee
Using the first two inequalities in (\ref{X4}), we see that 
\be \label{Y4}
  \int_{\mathbb{T}^d} \left|     \frac{A_0(\xi+ai,\eta,\De\xi)}{|\De\xi|^{1-\nu}}\right|^{p'}    \ d\xi    \ < \ \infty   \quad {\rm provided \ } \frac{1}{p'}>\frac{d-\nu-r}{d} \ .
\ee
It follows from (\ref{Y4}) similarly to  before that the contribution to (\ref{T4}) corresponding to the term in $J_0$ containing  $A_0$ is integrable on $\mathbb{T}^d$  for all $\nu, \  0<\nu\le 1$.  To deal with the term in $J_0$ corresponding to the second term
on the RHS of (\ref{W4}) we observe from Corollary 3.1 that the function $\xi\ra \left\{       q(\xi+ai,\eta)-q(\xi+\De\xi+ai,\eta)               \right\}/|\De\xi|^{1-\nu}$ is in the space $L_w^{d/(1-\nu)}(\mathbb{T}^d;\mathcal{L}(\mathbb{C}^d))$. 
Thus we have that the product of this function together with the other derivatives in the formula for $J_0$ is in $L_w^{d/(r+1-\nu)}(\mathbb{T}^d;\mathcal{L}(\mathbb{C}^d))$.Using the third inequality of (\ref{X4}) and arguing as before 
we conclude that that the term in $J_0$ corresponding to $A_1$ is integrable on $\mathbb{T}^d$  for all $\nu, \  0<\nu\le 1$.  We can similarly see that the contributions to (\ref{T4}) from  $J_k, \ k\ge 1,$ are also  integrable on $\mathbb{T}^d$  for all $\nu, \  0<\nu\le 1$.  
\end{proof}
We may use Proposition 3.1 to obtain bounds on the Fourier inverse of the function $\xi\ra h_m(\xi,\eta), \ \xi\in\mathbb{T}^d$, defined by   (\ref{D3}).
\begin{lem}
The Fourier inverse $x\ra \hat{h}_m(x,\eta), \ x\in\mathbb{Z}^d,$ of the function $h_m$ of  (\ref{D3}) satisfies an inequality,
\be \label{Z4}
| \hat{h}_m(x,\eta) | \ \le \  C_{d,\nu}m^{3d}(1-\la/2)^m \frac{e^{-c_d\sqrt{\la\eta}|x|}}{(1+|x|)^{d-\nu}} \ , \quad x\in\mathbb{Z}^d \ ,  \ 0<\eta\le 1, \ \nu>0 \ ,
\ee
for constants $C_{d,\nu}$ depending only  on $d,\nu$ and $c_d>0$ on $d$.  
\end{lem}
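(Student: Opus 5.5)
The plan is to mimic the proof of Lemma 4.1, with $F$ replaced by $h_m$ and Corollary 3.1 replaced by the $m$-dependent bounds of Proposition 3.1. Fix $a\in\mathbb{R}^d$ with $|a|^2\le \la\eta/(2C_d)$, so that $(1-\la)^{m+1}[1+C_d|a|^2/\eta]^m\le (1-\la/2)^m$. By the analyticity in Lemma 3.1 and the definition (\ref{D3}), $\xi\ra h_m(\xi,\eta)$ continues analytically to the strip $|\Im\xi|^2<c_d\eta$. Shifting the contour as in (\ref{J4}) gives
\[
\hat{h}_m(x,\eta) \ = \ \frac{e^{a\cdot x}}{(2\pi)^d}\int_{\mathbb{T}^d} h_m(\xi+ai,\eta)e^{-i\xi\cdot x}\,d\xi \ .
\]
Choosing $a=-c\sqrt{\la\eta}\,x/|x|$ produces the factor $e^{-c_d\sqrt{\la\eta}|x|}$. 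For the bounded part, Lemma 3.1 and the bound on $\mathbf{b}(\cdot)$ give $|h_m(\xi+ai,\eta)|\le(1-\la)^{m+1}[1+C_d|a|^2/\eta]^m\le(1-\la/2)^m$ directly. This settles the case $|x|\le 1$.

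For the polynomial decay, I assume wlog that $|x_1|\ge|x|/d$ and integrate by parts $d-1$ times in $\xi_1$, as in (\ref{M4}). This leaves $(ix_1)^{-(d-1)}\int e^{-i\xi\cdot x}\pa_{\xi_1}^{d-1}h_m(\xi+ai,\eta)\,d\xi$. Here the argument is simpler than in Lemma 4.1, since $h_m$ has no singular prefactor $A$: all singularities are already accounted for inside Proposition 3.1. I do not integrate by parts the full $d-1$ times directly, because $|\rho|=d-1$ gives only $L^{d/(d-1)}_w$, which is integrable on $\mathbb{T}^d$ by (\ref{Q4}) but not sharp. Instead I pass to the fractional version. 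Take $\De\xi$ parallel to $\mathbf{e}_1$ with $\De\xi\cdot x=\pi$, so $|\De\xi|\simeq 1/(1+|x|)$, and write $2I_{d-1}(x)$ as the difference in (\ref{S4}). This yields
\[
|I_{d-1}(x)|\le \frac{C}{|x|^{d-\nu}}\int_{\mathbb{T}^d}\Big|\frac{1}{|\De\xi|^{1-\nu}}\pa_{\xi_1}^{d-1}\big[h_m(\xi+\De\xi+ai,\eta)-h_m(\xi+ai,\eta)\big]\Big|\,d\xi \ .
\]
By the second part of Proposition 3.1, the integrand lies in $L^{p}_w$ with $p=d/(d-\nu)>1$, and its norm is at most $M_\nu m^{3d}(1-\la)^{m+1}[1+C_d|a|^2/\eta]^m\le M_\nu m^{3d}(1-\la/2)^m$. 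Applying (\ref{Q4}) with $E=\mathbb{T}^d$ then bounds the integral by $\frac{p}{p-1}(2\pi)^{d(1-1/p)}M_\nu m^{3d}(1-\la/2)^m$. Combining this with the exponential factor gives (\ref{Z4}). One has to check that the weak norm of a matrix-valued function controls the entries, which holds because the Hilbert–Schmidt norm dominates them.

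The main point to verify is that the contour shift and the integration by parts commute with the $m$-dependent bounds. Concretely, the exponential weight $e^{a\cdot x}$ has to be paired with the same $a$ used in Proposition 3.1. Also, the restriction $|a|^2<c_d\eta$ from Lemma 3.1 has to be reconciled with $|a|^2\le\la\eta/(2C_d)$ by shrinking $c_d$ and putting the $\la$ into the decay rate, giving $c_d\sqrt{\la\eta}$. Finally, the constant blows up as $\nu\to0$, which is why $C_{d,\nu}$ depends on $\nu$.
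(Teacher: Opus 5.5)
Your proposal is correct and is essentially the paper's proof. The contour shift (\ref{AA4}) with $|a|^2\le\la\eta/2C_d$ gives the exponential factor and the $(1-\la/2)^m$ decay. The polynomial factor then comes from $d-1$ integrations by parts in $\xi_1$, the half-period translation of (\ref{S4})--(\ref{T4}), the weak $L^{d/(d-\nu)}$ bound on (\ref{BW3}) from Proposition 3.1, and (\ref{Q4}).

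As you note, no singular prefactor $A(\xi+ai,\eta)$ arises here, which is why the paper simply cites the argument of Lemma 4.1. The only bookkeeping point is that the trivial bound must cover all $|x|<\pi d$, not just $|x|\le 1$, so that $|\De\xi|=\pi/|x_1|\le 1$ as Proposition 3.1 requires.
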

\begin{proof}
We proceed as in the proof of Lemma 4.1, using the representation
\be \label{AA4}
\hat{h}_m(x,\eta) \ = \ \frac{e^{a\cdot x}}{(2\pi)^d}\int_{\mathbb{T}^d}  h_m(\xi+ai,\eta) e^{-i\xi\cdot x} \  d\xi \ ,   \quad x\in\mathbb{Z}^d  \ ,
\ee
to obtain the exponential falloff in (\ref{Z4}).  Then we use integration by parts to obtain a formula similar to (\ref{M4}). Arguing as in the proof of Lemma 4.1, the  inequality (\ref{Z4})  then follows from the bound on  (\ref{BW3}) given in  Proposition 3.1.
\end{proof}
In the rest of this section we shall assume the following hypothesis:
\begin{hyp}
The functions $\hat{h}_m(\cdot,\eta), \ m\ge 1,$ of Lemma 4.2  satisfy an inequality
\be \label{AB4}
| \hat{h}_m(x,\eta) | \ \le \  \frac{C_\al^m}{(1+|x|)^{d+\al}} \ , \quad x\in\mathbb{Z}^d \ ,  \ 0<\eta\le 1 , \ \ m=1,2,\dots \ ,
\ee
for some $\al>0$ and constant $C_\al$. 
\end{hyp}
\begin{lem}
The Fourier inverse $x\ra \hat{q}(x,\eta), \ x\in\mathbb{Z}^d,$ of the function $q$ of  (\ref{D3}) satisfies an inequality,
\be \label{AC4}
| \hat{q}(x,\eta) | \ \le \   \frac{Ce^{-c_d\sqrt{\la\eta}|x|}}{(1+|x|)^{d+\beta}} \ , \quad x\in\mathbb{Z}^d \ ,  \ 0<\eta\le 1, 
\ee
for some $\beta>0$ and constants $C,c_d$. 
\end{lem}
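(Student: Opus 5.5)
Since $q(\xi,\eta)=\langle\mathbf{a}(\cdot)\rangle-\sum_{m\ge1}h_m(\xi,\eta)$ by (\ref{D3}), the Fourier inverse satisfies $\hat q(x,\eta)=-\sum_{m\ge1}\hat h_m(x,\eta)$ for $x\ne0$, while $\hat q(0,\eta)$ is bounded by Lemma 3.9. The plan is to interpolate, for each $m$, between the two available bounds on $\hat h_m$ and then sum the resulting series in $m$.

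The first bound is Lemma 4.2, which gives geometric decay in $m$ together with exponential decay in $x$ but only the weak polynomial rate $(1+|x|)^{-(d-\nu)}$:
\[ |\hat h_m(x,\eta)|\le C_{d,\nu}m^{3d}(1-\la/2)^m\frac{e^{-c_d\sqrt{\la\eta}|x|}}{(1+|x|)^{d-\nu}} . \]
The second is Hypothesis 4.1, which gives the strong rate $(1+|x|)^{-(d+\al)}$ but a constant $C_\al^m$ that may grow geometrically. Taking a geometric mean with weight $\theta\in(0,1)$ on Hypothesis 4.1 and $1-\theta$ on Lemma 4.2 yields
\[ |\hat h_m(x,\eta)|\le C\big[m^{3d}(1-\la/2)^m\big]^{1-\theta}C_\al^{\theta m}\,\frac{e^{-(1-\theta)c_d\sqrt{\la\eta}|x|}}{(1+|x|)^{(1-\theta)(d-\nu)+\theta(d+\al)}} . \]

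The exponent of $(1+|x|)$ is $d+\beta$ with $\beta=\theta\al-(1-\theta)\nu$. Fixing any $\theta$ and then choosing $\nu<\theta\al/(1-\theta)$ makes $\beta>0$. The series in $m$ converges provided $C_\al^{\theta}(1-\la/2)^{1-\theta}<1$, which holds once $\theta$ is small enough depending on $C_\al$ and $\la$; we may assume $C_\al\ge1$. The order of choices is therefore: first $\theta$ small for convergence, then $\nu$ small for $\beta>0$. Summing over $m$ then gives (\ref{AC4}), with exponential rate $(1-\theta)c_d$ in place of $c_d$.

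The one step needing care is the competition between the growth $C_\al^{m}$ in Hypothesis 4.1 and the decay $(1-\la/2)^m$ in Lemma 4.2. It is resolved only because $\theta$ may be taken arbitrarily small at the price of a small $\beta$, which is why the lemma asserts only some $\beta>0$. Beyond that, one should check that the constants in Lemma 4.2 are uniform in $\eta$ and compatible with the choice of $a$ used there, so that the exponential factor survives the interpolation.
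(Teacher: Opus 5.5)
Your proposal is correct and is essentially the paper's proof: the paper likewise bounds $|\hat h_m(x,\eta)|$ by the geometric mean of Lemma 4.2 (weight $1-\delta$) and Hypothesis 4.1 (weight $\delta$). It then takes $\delta$ small first so the series in $m$ converges, and afterwards $\nu<\al\delta/(1-\delta)$, giving $\beta=\al\delta-\nu(1-\delta)>0$. Your summability condition $C_\al^{\theta}(1-\la/2)^{1-\theta}<1$ is in fact the accurate one (the paper writes $(1-\la/2)C_\al^{\delta}<1$), and in either form it holds once the interpolation weight is small enough.
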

\begin{proof}
From Lemma 4.2 and Hypothesis 4.1 we obtain the inequality,
\be \label{AD4}
| \hat{h}_m(x,\eta) | \ \le \  \left[ C_{d,\nu}m^{3d}(1-\la/2)^m \frac{e^{-c_d\sqrt{\la\eta}|x|}}{(1+|x|)^{d-\nu}}\right]^{1-\del}\left[\frac{C_\al^m}{(1+|x|)^{d+\al}}\right] ^\del\ ,
\ee
for any $\del>0$. We first choose $\del$ such that $(1-\la/2)C^\del_\al<1$ and then $\nu$ such that  $\nu<\al\del/(1-\del)$. The inequality (\ref{AC4}) with $\beta=\al\del-\nu(1-\del)>0$ follows from (\ref{D3}) and (\ref{AD4}). 
\end{proof}
We can use Lemma 4.3 to improve the estimate (\ref{C4}) on the second derivative (\ref{B4})  of the averaged Green's function. 
\begin{proposition}
The second derivative (\ref{B4})  of the averaged Green's function satisfies an inequality
\be \label{AE4}
\left|            \na\na^*G_{\mathbf{a},\eta}(x)         \right| \ \le \   \frac{C_de^{-c_d\sqrt{\la\eta}|x|}}{(1+|x|)^{d}} \ , \quad x\in\mathbb{Z}^d \ ,  \ 0<\eta\le 1,
\ee
for some constants $C_d,c_d>0$. 
\end{proposition}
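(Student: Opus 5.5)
The plan is to reduce the proof of (\ref{AE4}) to bounding $x_1^{d}\,\na\na^*G_{\mathbf{a},\eta}(x)$ in configuration space, rather than to estimating a Fourier integral that is not absolutely convergent. By symmetry we assume $|x_1|\ge |x|/d$. The exponential factor is handled exactly as in Lemma 4.1: shift the contour $\xi\ra\xi+ai$ in (\ref{B4}) with $|a|^2<c_d\la\eta$, and use (\ref{I4}) to control the denominator. It therefore suffices to show that the function $F(\xi)=e(\xi+ai)e(\xi-ai)^*/[\eta+e(\xi-ai)^*q(\xi+ai,\eta)e(\xi+ai)]$ satisfies $|\widehat{(\pa/\pa\xi_1)^dF}(x)|\le C$ uniformly in $x,\eta$. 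The shifted $q(\cdot+ai,\eta)$ has Fourier coefficients $e^{a\cdot y}\hat q(y,\eta)$. These still decay like $(1+|y|)^{-d-\beta}$ by Lemma 4.3, provided $a$ is taken small compared with the constant $c_d$ there.

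First we split off the constant-coefficient part. Set $q_0=q(ai,\eta)$ and let $F_0$ be $F$ with $q$ replaced by $q_0$. By Lemma 3.9 the matrix $q_0$ satisfies (\ref{CF3}), (\ref{CG3}). Then $\hat F_0$ is the second derivative of a constant-coefficient lattice Green's function with mass $\eta$, and it satisfies the bound (\ref{AE4}) by standard (classical) estimates. It remains to treat $F-F_0$. Differentiating $d$ times in $\xi_1$, we write $(\pa/\pa\xi_1)^d(F-F_0)$ as a finite sum of terms $A(\xi)\prod_{j=1}^n \pa^{r_j}_{\xi_1} q_{m_j,n_j}(\xi+ai,\eta)$ with $r=\sum r_j\le d$, as in (\ref{O4}). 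Here $|A(\xi)|\le C|\xi|^{-(d-r)}$ when $n\ge1$. In the term with no derivative on $q$, the difference $q(\xi+ai)-q_0$ appears instead, which gains a factor $|\xi|^{\beta'}$ for $\beta'<\beta$: indeed Lemma 4.3 gives $\sum_y|y|^{\beta'}|\hat q(y)|<\infty$, so $q$ is uniformly $C^{\beta'}$.

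The key step is to bound the Fourier inverse of each such term pointwise, using its convolution structure on $\mathbb{Z}^d$ rather than $L^p$ bounds on $\mathbb{T}^d$ (which only give the $d-\nu$ decay of (\ref{C4})). The inverse of $\pa^{r_j}_{\xi_1}q$ is $(-iy_1)^{r_j}\hat q(y)$, which is $O((1+|y|)^{r_j-d-\beta})$. It therefore lies in $\ell^{p_j}$ for $1/p_j<(d+\beta-r_j)/d$, and we may choose $1/p_j$ strictly larger than $(d-r_j)/d$. The inverse of $A$ behaves like a homogeneous kernel of degree $-r$, so it lies in weak $\ell^{d/r}$ (or is bounded, for $r=0$ after the Hölder gain). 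The Fourier inverse of the product is the convolution of these $n+1$ kernels. Its sup norm is bounded by the multilinear Young inequality provided $\sum 1/p_j\ge n$ in total, with one factor allowed to be weak. The slack $n\beta/d$ gained from Lemma 4.3 (respectively $\beta'/d$ in the difference term) is exactly what makes the exponents sum strictly past the critical value $n$. This in turn allows one to use strong-type Young for all factors except the weak one from $A$ (via O'Neil's inequality), and it gives $|x_1|^d|\widehat{F-F_0}(x)|\le C$.

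The main obstacle I expect is the term where the $q$-dependence enters only through $q(\xi+ai)-q_0$ inside the denominator, together with terms with $r=d$ where $A$ is merely bounded. In these cases the kernel of $A$ is not in any good weak space by itself. My first approach is to expand $1/[\eta+e^*qe]-1/[\eta+e^*q_0e]$ as the product of two homogeneous factors and the difference $e^*(q-q_0)e$. I would then place the derivatives so that at least one of them falls on the Hölder difference, and verify that the exponent bookkeeping still gives a strict inequality. If this fails, the fallback is the Parseval route suggested in $\S2$: bound $\sum_x |x|^{d+\epsilon}|\hat F(x)|^2$-type quantities on dyadic shells via Lemma 4.3, then recover pointwise bounds by interpolating with the Lemma 4.1 estimate.
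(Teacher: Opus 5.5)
There is a genuine gap, and it sits at the step the paper treats as the crux. Your Young/O'Neil bookkeeping needs pointwise decay of the Fourier inverse $\hat A$ of the coefficient functions $A$, which depend non-smoothly on $q(\xi+ai,\eta)$. You only assert that $\hat A$ ``behaves like a homogeneous kernel of degree $-r$''. Nothing in your argument proves this. For $r=d$, where $A$ is bounded and of the form (\ref{D4}), the asserted bound $|\hat A(x)|\le C(1+|x|)^{-d}$ has the same strength as Proposition 4.1 itself, whose symbol is also of the form (\ref{D4}); so the claim is circular. Your last paragraph then reverses course (``not in any good weak space'') and leaves the $r=d$ terms open.

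The missing ingredient is Lemma 4.1. For $A$ of the form (\ref{D4}) it gives $|\hat A(x)|\le C_\nu(1+|x|)^{-(d-\nu)}$ for every $\nu>0$; the contour shift is harmless if $|a|$ is small relative to the constant in (\ref{E4}). Since $\sum_j r_j=d$, Lemma 4.3 gives $|\hat B(y)|\le C(1+|y|)^{-\beta}$ for $B=\prod_j\partial_{\xi_1}^{r_j}q$. Taking $\nu<\beta$ then makes $\sum_y|\hat A(y)||\hat B(x-y)|$ bounded uniformly in $x$; this is (\ref{AS4})--(\ref{AU4}). In other words, the $\beta$-slack from Lemma 4.3 must pay for the $\nu$-loss in Lemma 4.1, not just for the weak/strong mismatch.

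Your proposed remedies do not supply this. Expanding $F-F_0$ still produces, by Leibniz, terms in which derivatives fall on $q-q_0$ (for instance all $d$ of them), and their coefficients are bounded with no H\"older gain. Interpolating an $\ell^2$ shell bound with the weaker estimate of Lemma 4.1 cannot yield the sharp $|x|^{-d}$. By contrast, the $n=0$ term you list as an obstacle is harmless: $|A|\le C|\xi|^{-d+\beta'}$ is integrable, so its Fourier coefficients are bounded by its $L^1$ norm.

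For $1\le r<d$ your route needs a further unproved lemma: $|\hat A(x)|\le C(1+|x|)^{-(r-\nu)}$ for coefficients of homogeneity $r-d$. Proving it would mean redoing a Lemma 4.1-type argument for each $r$. The paper needs no Fourier-side information on $A$ in this range. It cuts off $|\xi|\le C/(1+|x|)$ with $\phi$, which plays the role of your subtraction of $F_0$ (a reasonable alternative). It then translates by $\Delta\xi$ with $\Delta\xi\cdot x=\pi$ and sums over dyadic shells using Corollary 3.1 and (\ref{W4})--(\ref{X4}); see (\ref{AN4})--(\ref{AQ4}).

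So the outline of your first three paragraphs can be made to work, but only once the decay of $\hat A$ is actually established: via Lemma 4.1 for $r=d$, and an analogue of it for $r<d$.
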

\begin{proof}
Using analytic continuation again we have the representation
\be \label{AF4}
\na\na^*G_{\mathbf{a},\eta}(x)  \ = \  \frac{e^{a\cdot x}}{(2\pi)^d}\int_{\mathbb{T}^d} \frac{e(\xi+ai)e(\xi-ai)^*e^{-ix\cdot\xi} }{\eta+e(\xi-ai)^*q(\xi+ai,\eta)e(\xi+ai)} \ d\xi \  .
\ee
By choosing $a$ in (\ref{AF4})  appropriately we obtain the exponential falloff in (\ref{AE4}). 
Let $\phi:\mathbb{R}^d\ra\mathbb{R}$ be a $C^\infty_c$ function   such that $\phi(\xi)=1$ for $|\xi|<20\pi$. To obtain the polynomial decay in (\ref{AE4}) we write the integral in (\ref{AF4}) as a sum,
$I_1(x)+I_2(x)$, where
\be \label{AG4}
I_1(x) \ = \ \int_{\mathbb{T}^d} \phi(\{1+|x|\}\xi)\frac{e(\xi+ai)e(\xi-ai)^*e^{-ix\cdot\xi} }{\eta+e(\xi-ai)^*q(\xi+ai,\eta)e(\xi+ai)} \ d\xi \  .
\ee
It easily follows from (\ref{I4}) that $|I_1(x)|\le C/(1+|x|)^d$.  

Assuming  that $|x_1|\ge |x|/d$ and using integration by parts, we write $I_2(x)$ as
\begin{multline} \label{AH4}
I_2(x) \ = \ \frac{1}{(ix_1)^{d}}\int_{\mathbb{T}^d}e^{-ix\cdot\xi} \frac{\pa^{d}}{\pa\xi_1^{d}}\Bigg[ \left\{1-\phi(\{1+|x|\}\xi)\right\}  \\
\times\frac{e(\xi+ai)e(\xi-ai)^* }{\eta+e(\xi-ai)^*q(\xi+ai,\eta)e(\xi+ai)} \Bigg] \ d\xi  \ = \ \sum_{j=0}^d I_{2,j}(x) \  .
\end{multline}
The sum over $j, \ 0\le j\le d,$ in (\ref{AH4}) comes from the application of Leibniz rule, with $I_{2,j}(x)$ being the term with $j$ derivatives acting on  the function $\xi\ra 1-\phi(\{1+|x|\}\xi)$.
Hence we have for $j\ge 1$  that 
\be \label{AI4}
|I_{2,j}(x)| \ \le \  \frac{C}{(1+|x|)^{d-j}} \int_{|\xi|<R/(1+|x|)}  \left|        \frac{\pa^{d-j}}{\pa\xi_1^{d-j}} \left\{\frac{e(\xi+ai)e(\xi-ai)^* }{\eta+e(\xi-ai)^*q(\xi+ai,\eta)e(\xi+ai)}            \right\}                           \right| \ d\xi \ ,
\ee
for any ball in $\mathbb{R}^d$ of radius $R$  containing ${\rm supp} \  \phi(\cdot)$. 
Arguing as in the proof of Lemma 4.1, in particular using the inequality (\ref{Q4}), we see that the integral in (\ref{AI4}) is bounded by $C/(1+|x|)^j$, whence we conclude that $|I_{2,j}(x)|\le C/(1+|x|)^d$ if $j\ge 1$

To  bound $I_{2,0}(x)$ we  write similarly to (\ref{N4}), 
\be \label{AJ4}
  \frac{\pa^{d}}{\pa\xi_1^{d}} \left\{\frac{e(\xi+ai)e(\xi-ai)^* }{\eta+e(\xi-ai)^*q(\xi+ai,\eta)e(\xi+ai)} \right\}     \ = \  P\left(  \xi+ ai,\eta,\frac{\pa q(\xi+ai,\eta)}{\pa\xi_1}, \dots,      \frac{\pa^{d} q(\xi+ai,\eta)}{\pa\xi_1^{d}}                           \right)  \ .
\ee
A typical term in the polynomial  of (\ref{AJ4}) has the form
\be \label{AK4}
A(\xi+ ai,\eta)\prod_{j=1}^{d} \frac{\pa ^{r_j}q_{m_j,n_j}(\xi+ai,\eta)}{\pa\xi_1^{r_j}} \ , \quad {\rm with \ } r_1+\cdots+r_{d}=r \ \le  \ d \ ,
\ee
where the $r_j$ are non-negative integers.  One also sees that
\be \label{AL4}
|A(\xi+ai,\eta)| \ \le \ \frac{C}{|\xi|^{d-r}} \ , \quad \xi\in\mathbb{T}^d \ .
\ee

We may bound the integral of  a term (\ref{AK4}) as before provided $r<d$. To do this we do a fractional integration by parts in the integral
\be \label{AM4}
\frac{1}{(ix_1)^{d}}\int_{\mathbb{T}^d}e^{-ix\cdot\xi} \left\{1-\phi(\{1+|x|\}\xi)\right\}  A(\xi+ ai,\eta)\prod_{j=1}^{d} \frac{\pa ^{r_j}q_{m_j,n_j}(\xi+ai,\eta)}{\pa\xi_1^{r_j}}  \ d\xi  \ ,
\ee
and proceed in a similar way to (\ref{S4})-(\ref{W4}).  Thus as in (\ref{S4}) we do a translation  by $\De\xi$  in the integrand of (\ref{AM4}) where $\De\xi\cdot x=\pi$.  The expression (\ref{AM4})  is half the value of 
\begin{multline} \label{AN4}
\frac{1}{(ix_1)^{d}}\int_{\mathbb{T}^d}e^{-ix\cdot\xi} \left\{\phi(\{1+|x|\}[\xi+\De\xi])-\phi(\{1+|x|\}\xi)\right\}  A(\xi+ ai,\eta)\prod_{j=1}^{d} \frac{\pa ^{r_j}q_{m_j,n_j}(\xi+ai,\eta)}{\pa\xi_1^{r_j}}  \ d\xi  \\
+\frac{1}{(ix_1)^{d}}\int_{\mathbb{T}^d}e^{-ix\cdot\xi} \left\{1-\phi(\{1+|x|\}[\xi+\De\xi])\right\}  \\
\times  \left[ A(\xi+ ai,\eta)\prod_{j=1}^{d} \frac{\pa ^{r_j}q_{m_j,n_j}(\xi+ai,\eta)}{\pa\xi_1^{r_j}}  -    A(\xi+\De\xi+ ai,\eta)\prod_{j=1}^{d} \frac{\pa ^{r_j}q_{m_j,n_j}(\xi+\De\xi+ai,\eta)}{\pa\xi_1^{r_j}}\right]  \ d\xi  \ .
\end{multline}
The integration in the  first term of (\ref{AN4}) is confined to  the region $\{|\xi|<M/(1+|x|)\}$ for some constant $M>20\pi$.   Since we may also choose $\De\xi$ to satisfy $|\De\xi|<\pi/(1+|x|)$,  we see that
$\phi(\{1+|x|\}[\xi+\De\xi])-\phi(\{1+|x|\}\xi)=0$ if $|\xi|<10\pi/(1+|x|)$.  Hence we have from (\ref{AL4}) that the first term in (\ref{AN4}) is bounded by
\be \label{AO4}
\frac{C}{(1+|x|)^r} \int_{|\xi|<M/(1+|x|)} \left|                 \prod_{j=1}^{d} \frac{\pa ^{r_j}q_{m_j,n_j}(\xi+ai,\eta)}{\pa\xi_1^{r_j}}                \right| \ d\xi  \ .
\ee
Applying the inequality (\ref{Q4}) in (\ref{AO4}) and using Corollary 3.1 we conclude that the first integral in (\ref{AN4}) is bounded by $C/(1+|x|)^d$. 

 To bound the second term in (\ref{AN4}) we use the formula  (\ref{U4}), replacing $d-1$ by $d$.   We consider the contribution   from  $J_k, $ as in (\ref{V4}), with $k\ge1$.   Thus we need to estimate an integral,
\be \label{AP4}
\frac{1}{(1+|x|)^d} \int_{|\xi|>1/(1+|x|)}  |J_k(\xi+ai,\eta,\De\xi)| \ \frac{d\xi}{|\xi|^{d-r}} \ . 
\ee
We have from Corollary 3.1 that the function $\xi\ra J_k(\xi+ai,\eta,\De\xi)/|\De\xi|^{1-\nu}$ is in $L^{d/(r+1-\nu)}_w(\mathbb{T}^d; \mathbb{C})$. It follows then from (\ref{Q4}) that  the expression in (\ref{AP4}) is bounded by
\begin{multline} \label{AQ4}
\frac{C}{(1+|x|)^{r+1-\nu}} \sum_{n\ge 1} 2^{-(d-r)n}\int_{2^{n-1}/(1+|x|)<|\xi|<2^n/(1+|x|)}  \frac{|J_k(\xi+ai,\eta,\De\xi)|}{|\De\xi|^{1-\nu}} \ d\xi \\
\le \ \frac{C}{(1+|x|)^d}\sum_{n\ge 0} 2^{-(1-\nu)n} \ \le \ \ \frac{C}{(1+|x|)^d} \ .
\end{multline}
The $J_0$ term may be similarly  bounded  following the representation   (\ref{W4}) and inequalities (\ref{X4})  with  $d-1$ replaced by $d$. 

Finally we need to deal with the case when $r=d$ in (\ref{AK4}). Thus we need to bound the integral
\begin{multline} \label{AR4}
I_3(x) \ = \ \frac{1}{(ix_1)^{d}}\int_{\mathbb{T}^d}e^{-ix\cdot\xi}  \left\{1-\phi(\{1+|x|\}\xi)\right\}   A(\xi+ ai,\eta)B(\xi+ai,\eta) \  d\xi \ , \\
 {\rm where \ } B(\xi+ai,\eta) \ = \ \prod_{j=1}^{d} \frac{\pa ^{r_j}q_{m_j,n_j}(\xi+ai,\eta)}{\pa\xi_1^{r_j}}  \ d\xi \  \ .
\end{multline}
 We first estimate (\ref{AR4}) in the case $\phi(\cdot)\equiv0$.  Note that the function $\xi\ra A(\xi,\eta)$ is of the form (\ref{D4}), whence  by Lemma 4.1 its Fourier inverse $x\ra\hat{A}(x,\eta)$ satisfies an inequality (\ref{E4}).    We have then by Parseval's theorem that
 \be \label{AS4}
 \frac{e^{a\cdot x}}{(2\pi)^d} I_3(x) \ = \ \frac{1}{(ix_1)^{d}}\sum_{y\in\mathbb{Z}^d} \hat{A}(y,\eta)\hat{B}(x-y,\eta)  \ .
 \ee
 We also have from Lemma 4.3 that
 \be \label{AT4}
| \hat{B}(y,\eta) | \ \le \   \frac{Ce^{-c_d\sqrt{\la\eta}|y|}}{(1+|y|)^{\beta}} \ , \quad y\in\mathbb{Z}^d \ ,  \ 0<\eta\le 1.
\ee
The inequality (\ref{AT4})  follows immediately from Lemma 4.3 when $r_1=d$ in (\ref{AR4}),  and otherwise by estimating convolutions.  For the bound on the function $x\ra\hat{A}(x,\eta)$ we choose $\nu$ in (\ref{E4}) such that $\nu<\beta$, to conclude that
\be \label{AU4}
\left|                  \sum_{y\in\mathbb{Z}^d} \hat{A}(y,\eta)\hat{B}(x-y,\eta)           \right| \ \le \ Ce^{-c_d\sqrt{\la\eta}|x|} \ , \quad x\in\mathbb{Z}^d \ ,
\ee
for some constant $C$. 

We also use the Parseval identity to estimate the term involving $\phi(\cdot)$ in (\ref{AR4}).  We have that 
\begin{multline} \label{AV4}
\frac{1}{(2\pi)^d}I_4(x) \ = \ \frac{1}{(2\pi ix_1)^{d}}\int_{\mathbb{T}^d}e^{-ix\cdot\xi} \phi(\{1+|x|\}\xi)   A(\xi+ ai,\eta)B(\xi+ai,\eta)  \ d\xi \\
= \ \frac{1}{(ix_1)^{d}}\sum_{y\in\mathbb{Z}^d}  \frac{1}{(1+|x|)^d} \hat{\phi}\left(\frac{x-y}{1+|x|}\right) \hat{C}(y,\eta,a) \ , \quad {\rm where \ } C(\xi,\eta,a)  \ = \  A(\xi+ ai,\eta)B(\xi+ai,\eta)  \ .
\end{multline}
We may estimate the function $y\ra \hat{C}(y,\eta,a)$ similarly to (\ref{AU4}).  Choosing $|a|=c_d\sqrt{\la\eta}$ with $c_d>0$ sufficiently small, we see  from Lemma 4.1 and Lemma 4.3 that $|\hat{C}(y,\eta,a)|\le K/(1+|y|)^{\beta-\nu}, \ y\in\mathbb{Z}^d$, for some constant $K$.  Since $\hat{\phi}(\cdot)$ is a Schwarz function it is easy to see that the summation over $y\in\mathbb{Z}^d$  in (\ref{AV4}) is bounded, independent of $x\in\mathbb{Z}^d$. We conclude that $|I_4(x)|\le C/(1+|x|)^d, \ x\in\mathbb{Z}^d$, for some constant $C$. 
\end{proof}
\begin{rem}
Proposition 4.1 with $\eta=0$  has been obtained for general environments in \cite{cgo}. The idea of the proof is to use the fact that the function $x\ra \na_y G_{\mathbf{a},
\eta}(x,y,\om), \ x\ne y$  is $\mathbf{a}$-harmonic when $\eta=0$.  This follows by differentiating (\ref{D1}) with respect to $y$. One then uses the Cacciapoli inequality to estimate the function  $x\ra \na_x\na_y G_{\mathbf{a},
\eta}(x,y,\om)$ in terms of the function $x\ra \na_y G_{\mathbf{a},\eta}(x,y,\om)$.
\end{rem}
\begin{corollary}
The second derivative (\ref{B4})  of the averaged Green's function is H\"{o}lder continuous. For some $\al>0$ it  satisfies an inequality
\begin{multline} \label{AW4}
\left|            \na\na^*G_{\mathbf{a},\eta}(x)  -  \na\na^*G_{\mathbf{a},\eta}(x')        \right| \ \le \   \frac{C_d|x-x'|^\al e^{-c_d\sqrt{\la\eta}|x|}}{(1+|x|)^{d+\al}} \ , \\
 {\rm where \ } x,x'\in\mathbb{Z}^d  \quad {\rm with \ }|x-x'|<|x|/2\ ,  \ 0<\eta\le 1, \quad {\rm and \ } C_d,c_d \ {\rm constants.}
\end{multline}
\end{corollary}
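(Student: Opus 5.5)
We would prove Corollary 4.1 by repeating the proof of Proposition 4.1, now applied to the difference of Fourier integrals. From the representation (\ref{AF4}) we have
\[
\na\na^*G_{\mathbf{a},\eta}(x)-\na\na^*G_{\mathbf{a},\eta}(x') \ = \ \frac{1}{(2\pi)^d}\int_{\mathbb{T}^d} \left[e^{(a-i\xi)\cdot x}-e^{(a-i\xi)\cdot x'}\right]\frac{e(\xi+ai)e(\xi-ai)^* }{\eta+e(\xi-ai)^*q(\xi+ai,\eta)e(\xi+ai)} \ d\xi \ .
\]
Since $|x-x'|<|x|/2$, we have $|x'|\simeq|x|$. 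Choosing $a$ as in Proposition 4.1, parallel to $x$ and with $|a|=c_d\sqrt{\la\eta}$, therefore gives the exponential factor for both points, with a slightly smaller $c_d$. It is enough to take $|x-x'|\ge 1$ and to prove the bound with $|x-x'|^\al$ replaced by $\min\{1,|x-x'|\,|\xi|\}^{\al}$ inside the integral. The case $|x-x'|\ge |x|/4$ (up to constants) already follows from Proposition 4.1, since then $|x-x'|^\al/|x|^{\al}$ is bounded below.

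We would split the integral as in (\ref{AG4}) with the cutoff $\phi(\{1+|x|\}\xi)$. The low frequency part $I_1$ is handled by the mean value bound $|e^{-i\xi\cdot x}-e^{-i\xi\cdot x'}|\le |\xi||x-x'|$. Together with (\ref{I4}), this bounds it by $C|x-x'|/(1+|x|)^{d+1}$, which is better than needed. For the high frequency part we integrate by parts $d$ times in $\xi_1$ (with $|x_1|\ge |x|/d$), exactly as in (\ref{AH4}). The only new feature is that the prefactor $1/(ix_1)^d$ is replaced by the difference $e^{-i\xi\cdot x}/(ix_1)^d-e^{-i\xi\cdot x'}/(ix_1')^d$. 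Equivalently, we write $x'=x+h$ and estimate the difference operator in configuration space.

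The cleanest route uses the Parseval step (\ref{AS4}). The terms with $r<d$ in (\ref{AK4}) were bounded in Proposition 4.1 by $C/(1+|x|)^d$ through fractional integration by parts with exponent $\nu$. If instead we perform fractional integration of order $1-\nu+\al$ in total, we gain an extra factor $|x|^{-\al}$. We then pay $\min(1,|h||\xi|)^\al\le |h|^\al|\xi|^\al$, which is absorbed into the weak $L^p$ integrability of Corollary 3.1 as long as $\al<1-\nu$ or so. Thus the terms with $r<d$ give $C|h|^\al/(1+|x|)^{d+\al}$. The $J_k$ decomposition (\ref{U4})--(\ref{X4}) carries over verbatim with the extra weight.

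The main obstacle is the term with $r=d$, the $I_3$ and $I_4$ terms, where there is no spare derivative. There we would use (\ref{AS4}) again: the quantity becomes
\[
\frac{1}{(ix_1)^{d}}\sum_y \hat A(y,\eta)\hat B(x-y,\eta)-\frac{1}{(ix_1')^{d}}\sum_y \hat A(y,\eta)\hat B(x'-y,\eta) \ .
\]
The difference of the prefactors gives $|h|/|x|^{d+1}$ times the bound (\ref{AU4}). For the convolution difference we would use the H\"older regularity of $\hat A$: by Lemma 4.1 applied to $\xi\ra A(\xi,\eta)(e^{-i\xi\cdot h}-1)$, or by interpolating $|\hat A(y)-\hat A(y+h)|$ between $2\sup|\hat A|$ and a gradient bound $C|h|(1+|y|)^{-d-1+\nu}$. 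This leaves a decay $|h|^{\al}(1+|y|)^{-d+\nu-\al}$. Combined with $|\hat B(z)|\le C(1+|z|)^{-\beta}$ from (\ref{AT4}), and taking $\al+\nu<\beta$, the convolution difference is $O(|h|^\al(1+|x|)^{-\al})$, up to the exponential factor. We expect the bookkeeping in this convolution, particularly near $y\approx x$ where $\hat B$ is only $(1+|z|)^{-\beta}$ summable in the weak sense, to be the delicate point. The remedy there is to exploit the exponential weight and the restriction $\al<\beta-\nu$. The $I_4$ term is then treated identically, using that $\hat\phi$ is Schwartz.
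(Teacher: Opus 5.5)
Your overall plan is the paper's: rerun Proposition 4.1 with the extra factor $|e^{-ix\cdot\xi}-e^{-ix'\cdot\xi}|\le 2|x-x'|^\al|\xi|^\al$, and treat the $r=d$ Parseval term (\ref{AS4}) through properties of $\hat A$. For the $r<d$ terms this works: the dyadic sum in (\ref{AQ4}) still converges when $\al<1-\nu$, and it yields the extra factor $(|x-x'|/|x|)^\al$.

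The step that does not hold up is the regularity of $\hat A$ you invoke for the $r=d$ term.
\begin{itemize}
\item The bound $|\hat A(y)-\hat A(y+\mathbf{e}_k)|\le C(1+|y|)^{-d-1+\nu}$ is a third-derivative-type estimate. To obtain it by the method of Lemma 4.1 you would need $d$ full $\xi_1$-derivatives plus a fractional difference quotient. That requires control of $\partial^d q/\partial\xi_1^d$, which Corollary 3.1 does not supply, since it stops at $|\rho|=d-1$. The Parseval device behind Lemma 4.3 would only give $(1+|y|)^{-d-\beta+\nu}$ for those terms.
\item Applying Lemma 4.1 to $A(\xi,\eta)(e^{-i\xi\cdot h}-1)$ does not help either. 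That function is not of the form (\ref{D4}), and derivatives falling on $e^{-i\xi\cdot h}$ produce powers of $|h|$. Moreover, with only $d-1$ derivatives the integrals in Lemma 4.1 are not concentrated near $|\xi|\le C/|y|$. So the factor $\min\{1,|h||\xi|\}^\al$ yields no gain $(|h|/|y|)^\al$.
\end{itemize}
Note that (\ref{AX4}) as printed carries only the decay $(1+|x|)^{-(d-\nu)}$, i.e.\ no such gain either.

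None of this is needed. The convolution in (\ref{AU4}) is not merely bounded. Split the sum into $|y|<|x|/2$, $|x-y|<|x|/2$, and the remaining region, and use (\ref{E4}) and (\ref{AT4}) with $\nu<\beta<d$ (which one may assume). This gives
\[
\Big|\sum_{y\in\mathbb{Z}^d}\hat A(y,\eta)\hat B(x-y,\eta)\Big| \ \le \ C(1+|x|)^{\nu-\beta}e^{-c_d\sqrt{\la\eta}|x|} \ .
\]
The $y$-sum in (\ref{AV4}) is likewise $O\big((1+|x|)^{\nu-\beta}\big)$, since $|\hat C(y,\eta,a)|\le K(1+|y|)^{\nu-\beta}$ and $\hat\phi$ is a Schwartz function. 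So the $r=d$ contributions are $O\big((1+|x|)^{-d-(\beta-\nu)}\big)$ times the exponential, at $x$ and at $x'$ separately.

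Because $x\ne x'$ are lattice points, $|x-x'|\ge 1$, and $|x'|\simeq|x|$. Hence each of these contributions is at most $C|x-x'|^\al(1+|x|)^{-d-\al}e^{-c\sqrt{\la\eta}|x|}$ as soon as $\al\le\beta-\nu$, which is the restriction you ended up imposing anyway. With the $r=d$ terms bounded in this way, with no difference in $x$ at all, the rest of your sketch goes through.
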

\begin{proof}
We use the fact that $|e^{-ix\cdot\xi}-e^{-ix'\cdot\xi}|\le 2|x-x'|^\al|\xi|^\al$ for any $\al, \ 0<\al\le 1$ and apply the argument of Proposition 4.1. The  H\"{o}lder continuity of the term (\ref{AS4})  follows from the  H\"{o}lder continuity of the
function $x\ra\hat{A}(x,\eta)$, which one can derive from the argument of  Lemma 4.1. In fact as well as the inequality (\ref{E4}) one has the inequality
\be \label{AX4}
| \hat{F}(x,\eta)- \hat{F}(x',\eta)| \ \le \   \frac{C_{d,\nu}|x-x'|e^{-c_d\sqrt{\la\eta}|x|}}{(1+|x|)^{d-\nu}} \ , \quad x,x'\in\mathbb{Z}^d \ ,  \ 0<\eta\le 1, \ \nu>0 \ ,
\ee
\end{proof}
\begin{proof}[Proof of Theorem 1.1]
We follow the argument in $\S7$ of \cite{cs1}, using Hypothesis 3.1 and Corollary 4.1.

\end{proof}

\vspace{.1in}
\section{Independent Variable Environment}
We first establish a covariance formula for functions of independent variables. Let $\psi_1,\dots,\psi_N$ be $N$ independent variables  taking values in $\mathbb{R}^n$ and $G(\cdot)$ be a $C^1$  function on $\mathbb{R}^{nN}$. 
We write
\begin{multline} \label{A5}
G(\psi_1,\dots,\psi_N)- \langle G(\psi_1,\dots,\psi_N)\rangle \ = \ \sum_{k=1}^N \{G_k(\psi_1,\dots,\psi_k)- G_{k-1}(\psi_1,\dots,\psi_{k-1})  \ \}  \ , \\
{\rm where \ } \quad G_k(\psi_1,\dots,\psi_k) \ = \ E[G(\psi_1,\dots,\psi_N) \ \big| \ \psi_1,\dots,\psi_k] \ , \quad k=0,\dots, N \ .
\end{multline}
The identity (\ref{A5}) is sometimes known as the Martingale method  (see Chapter 3 of \cite{vh}). 
We see from (\ref{A5}) that
\begin{multline} \label{B5}
{\rm cov}\left[  F(\psi_1,\dots, \psi_N), G(\psi_1,\dots, \psi_N)   \right] \\
 = \  \sum_{k=1}^N E\left[        \{F_k(\psi_1,\dots,\psi_k)- F_{k-1}(\psi_1,\dots,\psi_{k-1})\}\{G_k(\psi_1,\dots,\psi_k)- G_{k-1}(\psi_1,\dots,\psi_{k-1})\}            \right]  \ .
\end{multline} 
Next we write
\begin{multline} \label{C5}
G_k(\psi_1,\dots,\psi_k)- G_{k-1}(\psi_1,\dots,\psi_{k-1}) \ = \ \{   G_k(\psi_1,\dots,\psi_k)-G_k(\psi_1,\dots,\psi_{k-1}, \langle\psi_k\rangle)  \} \\
- \ E\left[      \{   G_k(\psi_1,\dots,\psi_k)-G_k(\psi_1,\dots,\psi_{k-1}, \langle\psi_k\rangle)  \}  \ \big| \ \psi_1,\dots, \psi_{k-1}                  \right] \ .
\end{multline}
Now we use (\ref{C5}) and   FTC to obtain  the formula
\begin{multline} \label{D5}
G_k(\psi_1,\dots,\psi_k)- G_{k-1}(\psi_1,\dots,\psi_{k-1}) \\
 = \ \int_0^1 dt \ \{G_k(t,\psi_1,\dots,\psi_k)-E\left[        G_k(t,\psi_1,\dots,\psi_k) \ \big| \ \psi_1,\dots,\psi_{k-1}           \right]  \ \} \ , \\
G_k(t,\psi_1,\dots,\psi_k)  \ = \  D_{\psi_k} G_k(\psi_1,\dots,\psi_{k-1}, t\psi_k+(1-t)\langle\psi_k\rangle)^*  \{\psi_k-\langle\psi_k\rangle\} \ ,
\end{multline}
where $D_{\psi_k}G(\cdot)$ denotes the gradient of $G(\cdot)$ with respect to $\psi_k$.  The  identities (\ref{B5})-(\ref{D5}) give a formula for the covariance of $F(\cdot)$ and $G(\cdot)$ in terms of the gradients of these functions. 
Using this formula we can easily prove a Poincar\'{e} type inequality due to Gloria and Otto \cite{go}  for functions of independent variables:
 \begin{lem} [Lemma 2.3 of \cite{go}]
 Let $a=\{a_i: \ 1\le i\le N\}$ be a sequence of  i.i.d. random variables with values in $\mathbb{R}$,  and  $X$ a $C^1$ function  of $a\in\mathbb{R}^N$.  Then
 \be \label{E5}
 {\rm var}[X] \ \le \ \left\langle      \sum_ {i=1}^N \sup_{a_i}\left|         \frac{\pa X}{\pa a_i}        \right|^2             \right\rangle {\rm var}[a_1] \ .
 \ee
 \end{lem}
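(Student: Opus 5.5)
The plan is to apply the covariance formula (\ref{B5})--(\ref{D5}) with $F=G=X$ and $\psi_k=a_k$, so $n=1$. Equivalently, use the martingale decomposition (\ref{A5}) and orthogonality of the increments. The first step is
\[
{\rm var}[X]=\sum_{k=1}^N E\big[\{X_k-X_{k-1}\}^2\big],
\]
where $X_k=E[X\,|\,a_1,\dots,a_k]$. This identity is (\ref{B5}) with $F=G$. It holds because, for $j<k$, the increment $X_j-X_{j-1}$ is $\sigma(a_1,\dots,a_{k-1})$-measurable while $X_k-X_{k-1}$ has zero conditional mean given $a_1,\dots,a_{k-1}$.

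Next I will bound each increment by a conditional variance. By independence, $X_{k-1}(a_1,\dots,a_{k-1})=\int X_k(a_1,\dots,a_{k-1},s)\,\mu(ds)$, where $\mu$ is the common law of the $a_i$. Conditionally on $a_1,\dots,a_{k-1}$, the increment $X_k-X_{k-1}$ is therefore the centred version of the one-variable function $s\mapsto X_k(a_1,\dots,a_{k-1},s)$ evaluated at the independent variable $a_k\sim\mu$. This gives
\[
E[\{X_k-X_{k-1}\}^2\,|\,a_1,\dots,a_{k-1}]={\rm var}_{a_k}\big[X_k(a_1,\dots,a_{k-1},a_k)\big].
\]
Rather than using (\ref{D5}) directly, I will use the elementary one-dimensional bound ${\rm var}[g(a_k)]\le E[(g(a_k)-g(\langle a_k\rangle))^2]$, which is just the fact that the mean minimizes the squared deviation. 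Combining it with the mean value theorem gives $|g(a_k)-g(\langle a_k\rangle)|\le \sup_s|g'(s)|\,|a_k-\langle a_k\rangle|$. Since $\sup_s|g'|$ depends only on $a_1,\dots,a_{k-1}$ and $a_k$ is independent of them, I get
\[
{\rm var}_{a_k}[X_k]\le \sup_{s}|\pa_{a_k}X_k(a_1,\dots,a_{k-1},s)|^2\,{\rm var}[a_1].
\]

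The third step is to pass the supremum from $X_k$ to $X$. Differentiating under the conditional expectation, which is justified by $C^1$ regularity and, if needed, a truncation/dominated convergence argument, gives
\[
\pa_{a_k}X_k(a_1,\dots,a_k)=E\big[\pa_{a_k}X\,|\,a_1,\dots,a_k\big],
\]
where the expectation is over $a_{k+1},\dots,a_N$. Hence
\[
\sup_{a_k}|\pa_{a_k}X_k|\le E\Big[\sup_{a_k}|\pa_{a_k}X|\;\Big|\;a_1,\dots,a_{k-1}\Big],
\]
and by Jensen its square is at most $E[\sup_{a_k}|\pa_{a_k}X|^2\,|\,a_1,\dots,a_{k-1}]$. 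Taking full expectations and summing over $k$ yields (\ref{E5}).

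The main point needing care is this interchange of supremum and conditional expectation, together with differentiation under the integral. It is routine under the $C^1$ hypothesis provided $\sup_{a_i}|\pa X/\pa a_i|$ is integrable; otherwise the right side of (\ref{E5}) is infinite and there is nothing to prove.
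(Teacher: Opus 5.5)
Your proof is correct and follows essentially the same route as the paper: the martingale decomposition (\ref{A5})--(\ref{B5}) with $F=G=X$, the bound $E[(Y-E[Y\,|\,a_1,\dots,a_{k-1}])^2]\le E[Y^2]$ for $Y=X_k(a_1,\dots,a_k)-X_k(a_1,\dots,a_{k-1},\langle a_k\rangle)$, independence of $a_k$ from $a_1,\dots,a_{k-1}$, and $\pa_{a_k}X_k=E[\pa_{a_k}X\,|\,a_1,\dots,a_k]$ combined with Jensen as in (\ref{J5}). The only difference is cosmetic: you bound $|Y|$ directly by the mean value theorem, whereas the paper writes $Y$ through the fundamental theorem of calculus (\ref{D5}) and applies Cauchy--Schwarz in the interpolation parameters $s,t$, which yields the same estimate.
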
 
 \begin{proof}
 Setting $G(\cdot)\equiv X(\cdot)$ and $\psi_i=a_i, \ 1\le i\le N,$ we have from (\ref{B5})-(\ref{D5}) that
 \begin{multline} \label{F5}
  {\rm var}[X] \  = \  \sum_{i=1}^N  \int_0^1 \int_0^1 ds \ dt \  E\Bigg[              \big\{G_k(s,\psi_1,\dots,\psi_k)-E\left[        G_k(s,\psi_1,\dots,\psi_k) \ \big| \ \psi_1,\dots,\psi_{k-1}           \right]  \ \big\}        \\
   \times   \left\{G_k(t,\psi_1,\dots,\psi_k)-E\left[        G_k(t,\psi_1,\dots,\psi_k) \ \big| \ \psi_1,\dots,\psi_{k-1}           \right]  \ \right\}                \Bigg] \  .
 \end{multline}
 Applying the Schwarz inequality to (\ref{F5}) we have that
 \begin{multline} \label{G5}
   {\rm var}[X] \  \le  \  \sum_{i=1}^N  \int_0^1 dt  \ E\Bigg[              \big\{G_k(t,\psi_1,\dots,\psi_k)-E\left[        G_k(t,\psi_1,\dots,\psi_k) \ \big| \ \psi_1,\dots,\psi_{k-1}           \right]  \ \big\}^2              \Bigg]  \\
   \le \ \sum_{i=1}^N  \int_0^1 dt  \ E\Big[              G_k(t,\psi_1,\dots,\psi_k)^2             \Big] \ .
 \end{multline}
 We see from (\ref{D5}) that 
 \be \label{H5}
   \left|G_k(t,\psi_1,\dots,\psi_k)\right| \ \le \  \sup_{\psi_k}\left|\frac{\pa G_k(\psi_1,\dots,\psi_k)}{\pa \psi_k}\right| |\psi_k-\langle\psi_k\rangle|\ .
 \ee
 Since the variables $\psi_i$ are independent, we conclude from (\ref{H5}) that
 \begin{multline} \label{I5}
 E\Big[              G_k(t,\psi_1,\dots,\psi_k)^2             \Big]  \ \le \ \left\langle          \sup_{\psi_k}\left|\frac{\pa G_k(\psi_1,\dots,\psi_k)}{\pa \psi_k}\right|^2                \right\rangle {\rm var}[\psi_k] \\
 \le \ \left\langle          \sup_{\psi_k}\left|\frac{\pa G(\psi_1,\dots,\psi_N)}{\pa \psi_k}\right|^2                \right\rangle {\rm var}[\psi_k] \ .
 \end{multline}
 The  last inequality in  (\ref{I5}) follows from  the fact that
 \begin{multline} \label{J5}
 \frac{\pa G_k(\psi_1,\dots,\psi_k)}{\pa \psi_k} \ = \ E\left[        \frac{\pa G(\psi_1,\dots,\psi_N)}{\pa \psi_k}    \ \big| \      \psi_1,\dots,\psi_k                                        \right]  \ , \\
  \left|\frac{\pa G_k(\psi_1,\dots,\psi_k)}{\pa \psi_k}\right|^2 \ \le \ E\left[    \left|    \frac{\pa G(\psi_1,\dots,\psi_N)}{\pa \psi_k} \right|^2   \ \Big| \      \psi_1,\dots,\psi_k                                        \right]  \ .
 \end{multline}
 The inequality (\ref{E5}) is now a consequence of  (\ref{G5}), (\ref{I5}).
 \end{proof}
 We show next  how the covariance formula (\ref{B5})-(\ref{D5}) can be used to establish Hypothesis 4.1 in the case of  the i.i.d. variable  environment.  Similarly to how we derived the formula (\ref{N3}), we see from 
 (\ref{B3}), (\ref{D3})  that   $\hat{h}_m(x,\eta)$ is the expectation value of the expression, 
 \begin{multline} \label{K5}
 \hat{h}_m(x,\om,\eta) \ = \ \sum_{y_1,\dots, y_{m-1}\in\mathbb{Z}^d} \mathbf{b}(\tau_{-y_0}\om) \prod_{j=1}^m P \  \na\na^*G_\eta(y_j-y_{j-1})\mathbf{b}(\tau_{-y_j}\om)  \ , \\
 {\rm where \ } y_0=0, \ y_m=x \ .
 \end{multline}
 
 It follows from results on weighted norm inequalities (see theorems 1,2 of chapter V of \cite{stein1})  that  the convolution  operator $S_\eta$  on functions $f:\mathbb{Z}^d\ra\mathbb{C}^d$  with kernel $\na\na^*G_\eta(\cdot)$ is bounded on weighted spaces.
 For $1\le p<\infty, \ k\ge0,$ we define the $\ell_{p,k}$ norm of  functions $f:\mathbb{Z}^d\ra\mathbb{C}^d$  by
\be \label{L5}
\| f(\cdot)\|^p_{p,k} \ = \ \sum_{x\in\mathbb{Z}^d}  [1+|x|^k]^p|f(x)|^p \  ,\quad 1\le p<\infty,  \ k>0,
\ee
and the  corresponding space of functions with finite norm (\ref{L5})  by $\ell_{p,k}(\mathbb{Z}^d;\mathbb{C}^d)$.
\begin{theorem} {\rm (weighted Calderon-Zygmund)}
For $1<p<\infty, \ 0<\eta\le 1,$ the operator $S_\eta$ is bounded on $\ell_{p,k}(\mathbb{Z}^d;\mathbb{C}^d)$ provided $kp'<d$.  The operator norm satisfies an inequality
\be \label{M5}
\left\| S_\eta \ \right\|_{\ell_{p,k}(\mathbb{Z}^d;\mathbb{C}^d)} \ \le \  \kappa_{p,k} \ ,
\ee
where $\kappa_{p,k}\ge1$ is independent of $\eta>0$.  
\end{theorem}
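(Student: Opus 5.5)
The plan is to deduce this theorem from the classical weighted-norm theory for Calderón--Zygmund operators, specifically Muckenhoupt $A_p$ weights. The kernel $K_\eta(x)=\na\na^*G_\eta(x)$ of $S_\eta$ is a discrete Calderón--Zygmund kernel, uniformly in $\eta\in(0,1]$. Its Fourier multiplier is $f(\xi)=e(\xi)e(\xi)^*/(\eta+e(\xi)^*e(\xi))$ from (\ref{A3}). This multiplier is bounded by $1$, so $S_\eta$ is bounded on $\ell_2$ with norm at most $1$. The explicit constant-coefficient Green's function $G_\eta$ of (\ref{C3}) satisfies
\[ |K_\eta(x)|\le \frac{C_d}{(1+|x|)^d}, \qquad |K_\eta(x+z)-K_\eta(x)|\le \frac{C_d|z|}{(1+|x|)^{d+1}} \quad (|z|\le |x|/2), \]
with $C_d$ independent of $\eta$. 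To see this, note that $\pa_\xi^\rho f$ is bounded by $C|\xi|^{-|\rho|}$ uniformly in $\eta$, as a homogeneous-type symbol. Then the standard dyadic Littlewood--Paley splitting of the inverse Fourier integral (the same integration-by-parts argument as in Lemma 4.1, but with no weak-space issues since $q\equiv I_d$) gives decay of $d$ and $d+1$ derivatives. Alternatively one can use the heat-kernel representation $G_\eta=\int_0^\infty e^{-\eta t}e^{t\De}\,dt$ together with discrete Gaussian derivative bounds for the lattice heat kernel.

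With the kernel bounds in hand, I would invoke theorems 1, 2 of chapter V of \cite{stein1}: a Calderón--Zygmund operator bounded on $L^2$ with a kernel satisfying the above Hörmander/standard estimates is bounded on $L^p(w)$ for every $w\in A_p$, $1<p<\infty$. The operator norm depends only on $p$, the $A_p$ constant of $w$, the $L^2$ bound and the kernel constants, hence is independent of $\eta$. The discrete setting is handled by extending functions on $\mathbb{Z}^d$ to be constant on unit cubes and the kernel to be constant on cube pairs; equivalently, one observes that the real-variable proof (Calderón--Zygmund decomposition and the good-$\lambda$ / sharp maximal argument) works verbatim on $\mathbb{Z}^d$ with counting measure, a space of homogeneous type.

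It remains to identify the weight. The norm (\ref{L5}) is $\ell_p(w)$ with $w(x)=[1+|x|^k]^p\simeq (1+|x|)^{kp}$. A power weight $|x|^{\gamma}$ (regularized at the origin) lies in $A_p$ exactly when $-d<\gamma<d(p-1)$. With $\gamma=kp\ge0$ the condition is $kp<d(p-1)$, i.e. $k<d/p'$, i.e. $kp'<d$, which is precisely the hypothesis. I will check the $A_p$ condition for $(1+|x|)^{kp}$ directly on lattice cubes. For cubes $Q$ of side $R$ centered at distance $\lesssim R$ from the origin, both averages behave like those of the pure power $R^{kp}$, and integrability of $(1+|x|)^{-kp/(p-1)}$ near the origin is automatic. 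For cubes far from the origin the weight is essentially constant on the cube. The $A_p$ constant then depends only on $d,p,k$, giving $\kappa_{p,k}$.

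The main obstacle is the uniform-in-$\eta$ regularity estimate on $K_\eta$, together with making the discrete transfer of Stein's theorem rigorous. I would first try the heat-kernel route, using known uniform bounds $|\na^\rho e^{t\De}(x)|\le C t^{-(d+|\rho|)/2}e^{-c|x|^2/t}$ (for $|x|\lesssim t$, with exponential-type bounds otherwise), integrating in $t$. Since the factor $e^{-\eta t}\le 1$, this immediately yields $\eta$-independent constants, and we may take $\kappa_{p,k}\ge1$ by enlarging it if necessary.
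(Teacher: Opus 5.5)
Your proposal is correct and follows the paper's route. The paper gives no separate proof; it simply appeals to the Muckenhoupt $A_p$ weighted-norm theory for Calder\'on--Zygmund operators in Chapter V of \cite{stein1}. You supply the verifications the paper leaves implicit: the $\eta$-uniform Calder\'on--Zygmund bounds on $\na\na^*G_\eta$, the fact that $(1+|x|)^{kp}\in A_p$ exactly when $kp'<d$, and the transfer to $\mathbb{Z}^d$. Of your two options for that transfer, treating $\mathbb{Z}^d$ as a space of homogeneous type is the cleaner one, since the cube-wise constant extension of the kernel is not H\"older continuous at small scales.
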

It follows from (\ref{K5}) and Theorem 5.1 that
\be \label{N5}
|v^* \hat{h}_m(x,\om,\eta)| \ \le     \  \frac{2^m(1-\la)^{m+1}\kappa_{p,k}^m|v|}{(1+|x|)^k} \ , \quad x\in\mathbb{Z}^d \ ,  \ v\in\mathbb{C}^d, \ \ 0<\eta\le 1.
\ee
By choosing $p$ large enough in (\ref{N5}) we can take $k<d$ arbitrarily close to $d$. 
Bourgain \cite{b} gave an alternative proof of the inequality (\ref{N5}) using a slight variation of the classical CZ theorem i.e. Theorem 5.1 with $k=0$:
\begin{corollary}
For $R\ge 1$ let $\chi_R:\mathbb{Z}^d\ra\{0,1\}$ be the characteristic function of the set $\{x\in\mathbb{Z}^d: \ |x|<R\}$. The convolution  operator $S_\eta^R$  on functions $f:\mathbb{Z}^d\ra\mathbb{C}^d$  with kernel $\na\na^*G_\eta(\cdot)\chi_R(\cdot)$ is bounded on $\ell_{p,0}(\mathbb{Z}^d;\mathbb{C}^d)$  and 
\be \label{O5}
\left\| S_\eta^R \ \right\|_{\ell_{p,0}(\mathbb{Z}^d;\mathbb{C}^d)} \ \le \  \kappa_{p,0} +\kappa \ ,
\ee
where the constant $\kappa$ is independent of $p$ and also $\eta>0,R\ge 1$.
\end{corollary}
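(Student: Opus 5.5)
We prove Corollary 5.1 by writing $S_\eta^R=S_\eta-\tilde S_\eta^R$, where $\tilde S_\eta^R$ is the convolution operator with the truncated kernel $K_R(x)=\na\na^*G_\eta(x)[1-\chi_R(x)]$. Theorem 5.1 with $k=0$ gives $\|S_\eta\|_{\ell_{p,0}}\le\kappa_{p,0}$. So it suffices to show that $\tilde S_\eta^R$ is bounded on $\ell_{p,0}$, $1<p<\infty$, with norm at most a constant $\kappa$ independent of $p,\eta,R$.

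We cannot simply bound $\|K_R\|_1$ via Young's inequality. The pointwise bound $|\na\na^*G_\eta(x)|\le C(1+|x|)^{-d}$ yields only $\sum_{|x|\ge R}|K_R(x)|\lesssim\log(1/(\sqrt\eta R))$, which is not uniform in $\eta$. The plan instead exploits the explicit Fourier structure. Here $G_\eta$ is the Green's function for the constant coefficient operator (\ref{C3}), so
\[
\na\na^*G_\eta(x)=\frac{1}{(2\pi)^d}\int_{\mathbb{T}^d}\frac{e(\xi)e(\xi)^*}{\eta+|e(\xi)|^2}e^{-ix\cdot\xi}\,d\xi .
\]
Its symbol $m(\xi)=e(\xi)e(\xi)^*/(\eta+|e(\xi)|^2)$ is bounded by $1$. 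The key step is to show that the truncated kernel also has bounded multiplier, i.e. $\sup_\xi|\widehat{K_R}(\xi)|\le\kappa'$ uniformly in $\eta,R$ (equivalently for the kernel $\na\na^*G_\eta\chi_R$). This is the standard "truncated singular integrals are uniformly bounded" estimate. It follows from two facts: the kernel bound $|\na\na^*G_\eta(x)|\le C|x|^{-d}$, and the cancellation $\sum_{r\le|x|<s}\na\na^*G_\eta(x)=O(1)$ uniformly in $r,s$. The cancellation in turn follows because $\na\na^*G_\eta$ differs from a multiple of the continuum kernel $\pa_i\pa_jG$ (whose spherical means vanish) by $O(|x|^{-d-1})$, plus the exponential factor $e^{-c\sqrt\eta|x|}$. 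Splitting $\sum_{|x|<R}K(x)e^{ix\cdot\xi}$ at $|x|\sim|\xi|^{-1}$ then gives the uniform bound in the usual way.

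With the kernel bounds, a uniform Hörmander condition $\sum_{|x|>2|y|}|K_R(x-y)-K_R(x)|\le C$ holds uniformly in $R,\eta$. The difference estimate for $|x|$ away from the truncation sphere uses $|\na\na\na^*G_\eta(x)|\le C|x|^{-d-1}$, available from the explicit constant coefficient Green's function. The shell $|x|\approx R$ contributes $\sum_{R-|y|\le|x|\le R+|y|}C|x|^{-d}\le C|y|/R\le C$. Calderón–Zygmund theory then gives $\ell_p$ boundedness for all $1<p<\infty$.

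To obtain the additive form $\kappa_{p,0}+\kappa$ with $\kappa$ independent of $p$, we avoid rerunning Calderón–Zygmund for the truncated operator. Instead we write $\na\na^*G_\eta\chi_R=\na\na^*G_\eta*\varphi_R+E_R$, where $\varphi_R$ is a smooth approximate identity at scale $R$ with $\|\varphi_R\|_1\le C$. The first piece has $\ell_p$ norm at most $C\kappa_{p,0}$; one can arrange it to be exactly at most $\kappa_{p,0}$ by taking $\varphi_R\ge0$ with mass $1$. The error kernel $E_R$ is controlled via the Hörmander-type bounds above: $|E_R(x)|\le CR^{-d}$ for $|x|\le 2R$ and $|E_R(x)|\le CR|x|^{-d-1}$ for $|x|>2R$. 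These give $\|E_R\|_1\le\kappa$ uniformly in $\eta,R$, and Young's inequality bounds $E_R$ on every $\ell_p$ by $\kappa$, independent of $p$. The main obstacle is the bound on $E_R$ inside $|x|\le 2R$. There one must use the cancellation, i.e. the mean zero of $\na\na^*G_\eta$ over shells established above, to show that $\na\na^*G_\eta*\varphi_R$ is $O(R^{-d})$ there rather than logarithmically large.
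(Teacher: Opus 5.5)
Your last paragraph is the only step aimed at the additive form $\kappa_{p,0}+\kappa$, and it has the decomposition backwards. A smoothed kernel $\na\na^*G_\eta*\varphi_R$ is $O(R^{-d})$ on $|x|\le 2R$ and equals $\na\na^*G_\eta(x)+O(R|x|^{-d-1})$ for $|x|>2R$. So it approximates the \emph{far} part $\na\na^*G_\eta(1-\chi_R)$, not the near part $\na\na^*G_\eta\chi_R$. With $E_R=\na\na^*G_\eta\chi_R-\na\na^*G_\eta*\varphi_R$ as you define it, $|E_R(0)|$ is of order one, and $|E_R(x)|\approx|\na\na^*G_\eta(x)|$ for $R\ll|x|\ll\eta^{-1/2}$. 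Both of your claimed bounds on $E_R$ therefore fail, and $\|E_R\|_1$ is comparable to $\|\na\na^*G_\eta\|_1$, which grows like $\log(1/\eta)$.

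Your opening reduction asks for something impossible. Test $\tilde S_\eta^R$ on $f=\delta(\cdot)v$, with $\delta$ the unit mass at $0$: as $p\to1$ its $\ell_p$ norm tends to at least $\sum_{|x|\ge R}|\na\na^*G_\eta(x)v|$. You yourself noted this is not uniform in $\eta$, so no $p$-independent bound on the far-part operator exists.

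You could repair the last step by decomposing the far part instead, $\na\na^*G_\eta(1-\chi_R)=\na\na^*G_\eta*\varphi_R+E_R'$. Your bounds on $E_R'$ are then correct. But now $\na\na^*G_\eta\chi_R=\na\na^*G_\eta*(\delta-\varphi_R)-E_R'$, and the main term is only controlled by $\kappa_{p,0}\|\delta-\varphi_R\|_1\approx 2\kappa_{p,0}$, which falls short of the stated bound. Your uniform Calder\'on--Zygmund argument for the truncated kernel gives boundedness with some constant $C_p$, but not in the stated form either.

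The missing idea is to choose the cutoff so that the main term stays inside the family to which Theorem 5.1 applies with the same constant. The paper compares $\na\na^*G_\eta\chi_R$ with $\na\na^*G_{\eta+1/R^2}$. Because $\kappa_{p,0}$ is uniform in the mass, $\|S_{\eta+1/R^2}\|_{\ell_{p,0}}\le\kappa_{p,0}$ exactly. Meanwhile $f_{\eta,R}=\na\na^*G_\eta\chi_R-\na\na^*G_{\eta+1/R^2}$ is summable uniformly in $\eta,R$:
on $|x|\ge R$ this follows from the decay $e^{-c|x|/R}(1+|x|)^{-d}$ of $\na\na^*G_{\eta+1/R^2}$;
on $|x|<R$, the difference $\na\na^*G_\eta-\na\na^*G_{\eta+1/R^2}$ has symbol $R^{-2}e(\xi)e(\xi)^*/[(\eta+|e(\xi)|^2)(\eta+R^{-2}+|e(\xi)|^2)]$. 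Hence it is $O(R^{-2}(1+|x|)^{2-d})$ for $d\ge3$, with a logarithmic analogue for $d=2$.
Young's inequality then gives $\kappa_{p,0}+\kappa$, with $\kappa$ the uniform bound on $\sum_x\|f_{\eta,R}(x)\|$.

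In your terms, this is exactly the repaired decomposition with the particular choice $\varphi_R=R^{-2}G_{\eta+1/R^2}$. This is a nonnegative kernel of mass at most one satisfying $\na\na^*G_\eta*(\delta-\varphi_R)=\na\na^*G_{\eta+1/R^2}$ identically. That choice recovers the exact constant $\kappa_{p,0}$. It also makes the cancellation, spherical-mean and third-derivative estimates in your plan unnecessary.
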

\begin{proof}
Let $f_{\eta,R}:\mathbb{Z}^d\ra\mathcal{L}(\mathbb{C}^d)$ be the function $f_{\eta,R}(x)=\na\na^*G_\eta(x)\chi_R(x)-\na\na^*G_{\eta+1/R^2}(x), \ x\in\mathbb{Z}^d$. Since 
$\|\na\na^*G_{\eta+1/R^2}(x)\|_{\mathcal{L}(\mathbb{C}^d)}\le C\exp[-c\sqrt{\eta+1/R^2} \ |x|]/(1+|x|)^d,  \  x\in\mathbb{Z}^d$, for some positive constants $C,c$ we see that 
$\sum_{|x|>R} \|f_{\eta,R}(x)\|_{\mathcal{L}(\mathbb{C}^d)}\le C_1$ for some constant $C_1$ independent of $\eta,R$.  By going into the Fourier representation we also see that
\be \label{P5}
\| \na\na^*G_\eta(x)-\na\na^*G_{\eta+1/R^2}(x)\|_ {\mathcal{L}(\mathbb{C}^d)} \ \le \ \frac{C_2}{R^2(1+|x|)^{d-2}} \ , \quad {\rm if \ }  |x|<R \ ,
\ee
for some constant $C_2$ independent of $\eta,R$.  The inequality (\ref{P5}) holds if $d\ge 3$, but one can prove  a similar inequality when $d=2$.  We conclude from (\ref{P5})  that
$\sum_{|x|<R} \|f_{\eta,R}(x)\|_{\mathcal{L}(\mathbb{C}^d)}\le C_2$ for some constant $C_2$ independent of $\eta,R$.   Hence we have  that $\sum_{x\in\mathbb{Z}^d} \|f_{\eta,R}(x)\|_{\mathcal{L}(\mathbb{C}^d)}\le C_1+C_2$.
 The inequality (\ref{O5}) follows from this and Theorem 5.1.
\end{proof}
\begin{rem}
Note that $\kappa_{p,0}$ converges to $\infty$ for $p\ra 1$ or $p\ra\infty$. 
\end{rem}
 \begin{lem} [Bourgain \cite{b}]
  The function  $x\ra \hat{h}_m(x,\om,\eta)$  satisfies the inequality
\be \label{Q5}
|v^* \hat{h}_m(x,\om,\eta)| \ \le     \  \frac{C_d2^mm^{2d+1}(1-\la)^{m+1}\{\kappa_{p,0}+\kappa\}^{m-1}|v|}{(1+|x|)^k} \ , \quad x\in\mathbb{Z}^d \ ,  \ v\in\mathbb{C}^d, \ \ 0<\eta\le 1,
\ee
where $1/p-1/2=k/2d$ and  $C_d$ depends only on $d$. 
\end{lem}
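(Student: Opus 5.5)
The plan is to reproduce Bourgain's argument: a dyadic decomposition of the chain $0=y_0,y_1,\dots,y_m=x$ in (\ref{K5}), combined with the truncated Calderon--Zygmund bound of Corollary 5.1 and Hölder's inequality on $\ell_p$ spaces. The exponent is fixed by $1/p-1/2=k/2d$, that is, $\ell_p\subset\ell_2$ with the Hölder loss $R^{d(1/p-1/2)}=R^{k/2}$ on a ball of radius $R$.

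\textbf{Step 1 (forcing a long jump).} Set $R=(1+|x|)/(2m)$. Since $\sum_{j=1}^m (y_j-y_{j-1})=x$, at least one increment satisfies $|y_j-y_{j-1}|\ge R$. We split each kernel as $\na\na^*G_\eta=\na\na^*G_\eta\chi_R+\na\na^*G_\eta(1-\chi_R)$ and expand the product in (\ref{K5}). Every term with all factors truncated vanishes. In each surviving term we single out the \emph{first} index $j$ carrying a long factor; it is preceded by $j-1$ truncated factors and followed by $m-j$ arbitrary ones. This gives at most $m$ groups. The projections $P$ and the bounded multipliers $\mathbf{b}(\tau_{-y}\om)$, with $\|\mathbf{b}\|\le 1-\la$, only produce the factors $2^m(1-\la)^{m+1}$, as in (\ref{N5}).

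\textbf{Step 2 (estimating one group).} Write the group as $\delta_0\mapsto$ (the $j-1$ truncated operators) $\mapsto$ long kernel $\mapsto$ (the remaining $m-j$ operators), evaluated at $x$. Pair it with a test vector and use duality, running the tail chain backward from $\delta_x$.
\begin{itemize}
\item The forward chain starting from $\delta_0$ is supported in the ball of radius $(j-1)R$. By Corollary 5.1 its $\ell_p$ norm is at most $(\kappa_{p,0}+\kappa)^{j-1}$.
\item The long kernel has sup norm at most $C/R^d$ and square-summable tail $\sum_{|z|>R}|\na\na^*G_\eta(z)|^2\le CR^{-d}$. Applied to an $\ell_p$ function of support radius $\lesssim mR$, it maps $\ell_p$ into $\ell_{p'}$-type spaces with a gain, via Hölder: $\|K f\|_2\le\|K\|_{2}\|f\|_1\le R^{-d/2}(mR)^{d(1-1/p)}\|f\|_p$, or a similar interpolated bound.
\item The backward chain from $\delta_x$ is treated symmetrically, using the $\ell_2$ (or $\ell_{p'}$) boundedness of $S_\eta$, with constant $1$ on $\ell_2$ by Lemma 3.1.
\end{itemize}
Balancing the exponents via $1/p-1/2=k/2d$ should give $R^{-k}$ up to powers of $m$.

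\textbf{Main obstacle.} The difficulty is obtaining exactly the decay $R^{-k}$ with only the $\kappa_{p,0}+\kappa$ constants (and not $\kappa$ on weighted spaces). The tail chain after the long jump is not localized, so I would localize the final factors too. For this I would take the last long jump instead of the first, or handle both ends, so that the chains on both sides are truncated and supported in balls of radius $mR$. Converting $\ell_p$ to $\ell_2$ at each end costs $(mR)^{k/2}$, and the middle long kernel, viewed from $\ell_2$ near $0$ to $\ell_2$ near $x$ (distance $\gtrsim R$), has Hilbert--Schmidt-type norm $\lesssim R^{-d}\cdot(mR)^{d}$ restricted appropriately. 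This is the step where the exponent bookkeeping has to work out. The powers of $m$ from the ball radii together with the $m$ choices of the jump index give the factor $m^{2d+1}$, and $C_d$ absorbs the dimensional constants. Finally, converting $R$ back to $1+|x|$ costs a further $m^k\le m^d$, which I expect is already absorbed into $m^{2d+1}$.
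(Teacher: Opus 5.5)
There is a genuine gap: your proposal never actually reaches the decay $R^{-k}$.

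\textbf{The Step 2 estimate gives only half the decay.} The computation you write down is: put $L*f$ in $\ell_2$ via $\|L*f\|_2\le\|L\|_{\ell_2}\|f\|_1\le CR^{-d/2}(mR)^{d/p'}\|f\|_p$, run the untruncated tail on $\ell_2$, and use $|h(x)|\le\|h\|_2$. This yields $R^{-d/2+d/p'}=R^{-d(1/p-1/2)}=R^{-k/2}$, which is only half of what is claimed. No balancing of exponents improves this.

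\textbf{The repair in your last paragraph also fails.} Taking the last long jump instead of the first only moves the unlocalized chain to the other end. Using both the first and the last long jump leaves an unlocalized chain in between. So neither choice localizes both ends. Moreover, even if both end chains $f,g$ were supported in balls of radius $mR$, pairing through $\ell_2$ with the Hilbert--Schmidt bound $CR^{-d}(mR)^d$ on the middle kernel gives $|\langle f,Lg\rangle|\le Cm^d\|f\|_p\|g\|_p$, with no decay at all. Also, for $p<2$ passing from $\ell_p$ to $\ell_2$ costs nothing, so the ``$(mR)^{k/2}$ per end'' accounting is not meaningful. The gain from $p<2$ can only be collected by measuring the localized chains in $\ell_1$, via $\|f\|_1\le|{\rm supp}\, f|^{1/p'}\|f\|_p$, and the long kernel in $\ell_\infty$ (or $\ell_{p'}$).

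\textbf{How the paper gets two-sided localization.} It decomposes the paths by the dyadic scale of the \emph{maximal} jump, $2^nR\le\max_j|y_j-y_{j-1}|<2^{n+1}R$ with $|x|=mR$. It takes $\tau$ to be the first index with $|y_\tau-y_{\tau-1}|\ge 2^nR$.
\begin{itemize}
\item The prefix kernels then carry $\chi_{2^nR}$ and the suffix kernels carry $\chi_{2^{n+1}R}$. This is why Corollary 5.1 is stated for truncation at an arbitrary radius.
\item Both $K_1$ and $K_2(\cdot,x)$ are supported in balls of radius at most $m2^{n+1}R$, and their $\ell_p$ norms are controlled by powers of $\kappa_{p,0}+\kappa$.
\item The middle kernel lives on the annulus and is bounded pointwise by $C(2^nR)^{-d}$.
\end{itemize}
H\"{o}lder then gives
\[
\frac{C_d2^m(1-\la)^{m+1}\{\kappa_{p,0}+\kappa\}^{m-1}}{(2^nR)^d}\,(2^nmR)^{2d/p'} .
\]
This is summable over $n\ge0$ exactly when $p'>2$, and the sum is of order $m^{2d/p'}R^{-d(1-2/p')}=m^{2d/p'}R^{-k}$.

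\textbf{An alternative rescue of your one-sided scheme.} The exponent you mention only in parentheses would work. Since $\|L\|_{\ell_{p'}}\le CR^{-d/p}$, Young's inequality gives $\|L*f\|_{p'}\le CR^{-d/p}(mR)^{d/p'}\|f\|_p$. The untruncated tail is bounded on $\ell_{p'}$ with constant $\kappa_{p',0}=\kappa_{p,0}$: convolution with $\na\na^*G_\eta$ is self-adjoint because $\na\na^*G_\eta(-z)=[\na\na^*G_\eta(z)]^T$. Then $|h(x)|\le\|h\|_{p'}$ gives $m^{d/p'}R^{-d/p+d/p'}=m^{d/p'}R^{-k}$, with no sum over $n$ needed. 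As written, however, your proposal commits to $\ell_2$ and leaves the decisive estimate unproven.
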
 
\begin{proof}
We consider the disjoint regions of points $[y_0,\dots,y_m], \ y_0=0,y_m=x, \ |x|=mR,$ with $y_j\in\mathbb{Z}^d$  such that  $2^nR\le \max\{|y_j-y_{j-1}|: \ j=1,2,\dots,m\} < 2^{n+1}R, \ n=0,1,\dots$, which we denote $S_{x,n,m}$.  We can associate a stopping time $\tau$  to points in $S_{x,n,m}$  by setting $\tau=\min\{j:  \ |y_j-y_{j-1}|\ge 2^nR\}$. Then we have that
\begin{multline} \label{R5}
\sum_{[y_1,\dots y_{m-1}]\in S_{x,n,m}} \  v^*\mathbf{b}(\tau_{-y_0}\om) \prod_{j=1}^m P \  \na\na^*G_\eta(y_j-y_{j-1})\mathbf{b}(\tau_{-y_j}\om)      \\
= \  \sum_{\tau=1}^m \sum_{y_1,\dots y_{m-1}\in \mathbb{Z}^d} \  v^*\mathbf{b}(\tau_{-y_0}\om) \prod_{j=1}^{\tau-1} P \  \na\na^*G_\eta(y_j-y_{j-1})\chi_{2^{n}R}(y_j-y_{j-1})\mathbf{b}(\tau_{-y_j}\om) 
 \\ \times \ P \  \na\na^*G_\eta(y_\tau-y_{\tau-1})  \{\chi_{2^{n+1}R}-\chi_{2^nR}\}(y_\tau-y_{\tau-1})\mathbf{b}(\tau_{-y_\tau}\om)  \\
  \times  \  \prod_{j=\tau+1}^m  P \  \na\na^*G_\eta(y_j-y_{j-1})\chi_{2^{n+1}R}(y_j-y_{j-1}) \mathbf{b}(\tau_{-y_j}\om)    \ .
\end{multline} 
Let $\om_j\in\Om, \ j=0,\dots,m,$ be arbitrary and  consider the kernels,
\be \label{S5}
\begin{array}{ccc}
K_1(y_{\tau-1}) \ &=& \ \sum_{y_1,\dots y_{\tau-2}\in \mathbb{Z}^d}    \  \mathbf{b}(\tau_{-y_0}\om_0) \prod_{j=1}^{\tau-1} \na\na^*G_\eta(y_j-y_{j-1})\chi_{2^{n}R}(y_j-y_{j-1})\mathbf{b}(\tau_{-y_j}\om_j)    \ , \\
K_2(y_\tau,x)  \ &=& \ \sum_{y_{\tau+1},\dots y_{m-1}\in \mathbb{Z}^d}\mathbf{b}(\tau_{-y_\tau}\om_\tau) \prod_{j=\tau+1}^m \na\na^*G_\eta(y_j-y_{j-1})\chi_{2^{n+1}R}(y_j-y_{j-1})\mathbf{b}(\tau_{-y_j}\om_j)      \ .
\end{array}
\ee
Applying  Corollary 5.1 we have  that
\be \label{T5}
\begin{array}{ccc}
\|v^*K_1(\cdot)\|_{\ell_{p,0}(\mathbb{Z}^d;\mathbb{C}^d)} \ &\le& \ (1-\la)^\tau\{\kappa_{p,0}+\kappa\}^{\tau-1} |v| \ , \\
 \|K_2(\cdot,x)v\|_{\ell_{p,0}(\mathbb{Z}^d;\mathbb{C}^d)}  \ &\le& \  (1-\la)^{m+1-\tau}\{\kappa_{p,0}+\kappa\}^{m-\tau} |v|\  \ .
\end{array}
\ee
Note that $K_1(y)=0$ for $|y|>(\tau-1) 2^nR$. We also have that $K_2(y,x)=0$ for $|y-x|>(m-\tau) 2^{n+1}R$. Using this fact and (\ref{T5}) we are able to estimate  the  norm of the sum in (\ref{R5}) for given $\tau$.
From H\"{o}lder's inequality and (\ref{T5})  we see that for any $p>1$ it  is  bounded as
\begin{multline} \label{U5}
\frac{C_d2^m}{(2^nR)^d}\sum_{y_{\tau-1},y_\tau} |v^*K_1(y_{\tau-1})| \ \|K_2(y_\tau,x)\|_{\mathcal{L}(\mathbb{C}^d)}  \\
 \le \ \frac{C_d'2^m(1-\la)^{m+1}\{\kappa_{p,0}+\kappa\}^{m-1}}{(2^nR)^d}\ (2^nmR)^{2d/p'} \ , \quad {\rm where \ } \frac{1}{p}+\frac{1}{p'} \ = \ 1 \ .
\end{multline}
Evidently if $p'>2$ we can sum (\ref{U5}) over $n\ge 0$ and the sum is bounded by a constant times   $|v|R^{-d(1-2/p')}$. The inequality (\ref{Q5}) follows by choosing $p'>1$ to satisfy $d(1-2/p')=k$. 
\end{proof}
For $r_1=0,\cdots \tau-1$, we define the $d\times d$ matrix random variable $\om\ra K_{1,n,\tau}(y_{\tau-1},r_1,\om)$ identical to $K_1(y_{\tau-1})$ in (\ref{S5}) but with $\om_j=\om$ for $r_1\le j\le\tau-1$.  Thus $\om_0,\cdots\om_{r_1-1}$ are arbitrary constants in $\Om$. 
Similarly we define for $r_2=\tau,\cdots m$,  the $d\times d$ matrix random variable $\om\ra K_{2,n,\tau}(y_{\tau},x,r_2,\om)$ identical to $K_2(y_{\tau},x)$ in (\ref{S5}) but with $\om_j=\om$ for $\tau\le j\le r_2$.  
We assume  $\om_{r_2+1},\cdots\om_{m}$ are arbitrary constants in $\Om$. 
One can easily see that $\hat{h}(x,\eta)$ is a sum of averages with respect to the $\om_j, \ j<r_1, \ j>r_2$, of covariances
\be \label{V5}
{\rm cov}\left[     K_{1,n,\tau}(y_{\tau-1}, r_1,\cdot),        \ \na\na^*G_\eta(y_\tau-y_{\tau-1})  \{\chi_{2^{n+1}R}-\chi_{2^nR}\}(y_\tau-y_{\tau-1})\          K_{2,n,\tau}(y_\tau, x,r_2,\cdot)   \right] \ .
\ee
We can obtain a bound on $\hat{h}(x,\eta)$ by summing the absolute values of the  at most $2^m$ covariances (\ref{V5}) over $\tau$ with $1\le \tau\le m$ and $n=0,1,\dots$. 
\begin{lem}
Denote the entries of the matrix  $K_{1,n,\tau}(y_{\tau-1}, r_1,\om)$ by  $\{ K^{i,j}_{1,n,\tau}(y_{\tau-1}, r_1,\om)\}, \ 1\le i,j\le d,$  and with a similar notation for the  entries of the matrix $K_{2,n,\tau}(y_\tau, x,r_2,\om)$. Then
there is an inequality, 
\begin{multline} \label{W5}
\sum_{ y_{\tau-1},y_\tau\in\mathbb{Z}^d} \Big| {\rm cov}\left[     K^{i,j}_{1,n,\tau}(y_{\tau-1}, r_1,\cdot), \          K^{i',j'}_{2,n,\tau}(y_\tau, x,r_2,\cdot)   \right] \    \ \Big| \\
 \le \  \frac{C_dm^{2d+3}(1-\la)^{m+1}\{\kappa_{p,0}+\kappa\}^{m-1}(2^nmR)^{2d/p'} }{R^{2d/p-d}} \ .
\end{multline}
\end{lem}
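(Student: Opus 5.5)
The plan is to apply the covariance formula (\ref{B5})--(\ref{D5}) to the pair $K^{i,j}_{1,n,\tau}(y_{\tau-1},r_1,\cdot)$ and $K^{i',j'}_{2,n,\tau}(y_\tau,x,r_2,\cdot)$, taking as independent variables $\psi_z=\mathbf{a}(\tau_z\om)$, $z\in\mathbb{Z}^d$. By (\ref{B5}) the covariance is a sum over sites $z$ of $E[\{F_z-F_{z-1}\}\{G_z-G_{z-1}\}]$. By (\ref{D5}) each martingale increment is an integral over $t$ of the derivative with respect to $\psi_z$, times $\psi_z-\langle\psi_z\rangle$, which is bounded by $(1-\la)$. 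The Schwarz inequality then bounds the covariance by
\[
C\sum_{z\in\mathbb{Z}^d}\Big\langle \sup_{\psi_z}|D_{\psi_z}K_1|^2\Big\rangle^{1/2}\Big\langle \sup_{\psi_z}|D_{\psi_z}K_2|^2\Big\rangle^{1/2},
\]
as in the proof of Lemma 5.1, except that the two factors are now different.

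Next I compute the derivatives. $K_1(y_{\tau-1},r_1,\om)$ is a sum over chains $y_{r_1},\dots,y_{\tau-1}$ with factors $\mathbf{b}(\tau_{-y_j}\om)$. Differentiating in $\psi_z$ picks out one index $j$ with $-y_j=z$, which splits the chain at $z$. This gives a product of a truncated chain kernel from $y_{r_1}$ to $z$ and another from $z$ to $y_{\tau-1}$. There are at most $m$ choices of $j$. The sup over $\psi_z$ is harmless because every factor is bounded uniformly in $\om$ by Corollary 5.1. The same splitting applies to $K_2$ at the point $z$, with the chain running between $y_\tau$ and $x$. Because of the cutoffs $\chi_{2^nR}$ and $\chi_{2^{n+1}R}$, the $K_1$ chain is supported in $|y_{\tau-1}|,|z|\lesssim m2^nR$, and the $K_2$ chain in $|y_\tau-x|,|z-x|\lesssim m2^{n+1}R$.

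To estimate the double sum over $y_{\tau-1},y_\tau,z$, I apply Corollary 5.1 repeatedly, exactly as in (\ref{T5}). This gives $\ell_p$ bounds $(1-\la)^{\cdot}\{\kappa_{p,0}+\kappa\}^{\cdot}$ on each half-chain viewed as a function of its free endpoint. Summing over $y_{\tau-1}$ and $y_\tau$ and using H\"older on the supports of diameter $\lesssim 2^nmR$ produces the factor $(2^nmR)^{2d/p'}$, matching (\ref{U5}).

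The extra decay $R^{-(2d/p-d)}$ has to come from the sum over $z$. Since $|x|=mR$, the point $z$ must be within distance $\lesssim m2^nR$ of both $0$ and $x$. For each $z$, the $K_1$-derivative at $z$ is an $\ell_p$-type kernel in $z$, and so is the $K_2$-derivative. I would pair them by H\"older in $z$ with exponents $p$ and $p'$, or interpolate between the $\ell_2$ bound and the $\ell_p$ bounds. This should yield an $\ell_{p/2}$-type product, whose sum over the support costs $R^{d-2d/p}$ relative to the $(2^nmR)$ normalisation. Combining the powers of $m$ from choosing $j$, from the martingale, and from the supports should give $m^{2d+3}$.

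I expect this $z$-summation, and the bookkeeping of exponents that produces exactly $R^{d-2d/p}$ with $p<2$, to be the main obstacle. My first attempt will be to use $\ell_p\cdot\ell_p\subset\ell_{p/2}$ together with the trivial inclusion $\ell_{p/2}\subset\ell_1$ on a ball, and to check it against the target exponent before trying anything more refined.
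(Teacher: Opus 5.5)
Up to (\ref{Z5}) your plan matches the paper. You apply the covariance formula with deterministic bounds, and the derivative at a common site $y$ splits $K_1$ into $f_1(y)g_1(y,y_{\tau-1})$ and $K_2$ into $g_2(y_\tau,y)f_2(y,x)$. H\"older on the supports of the inner pieces $g_1,g_2$ then gives $(2^nmR)^{2d/p'}$. One correction: $f_1$ is the chain from $y_0=0$ to $y=y_{j_1}$, including the segment with frozen $\omega_j$, $j<r_1$. It is not a chain starting at $y_{r_1}$. Likewise $f_2$ runs from $y$ to $y_m=x$.

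The gap is exactly the step you flag, and the tool you propose cannot close it. Bounds that use only the $\ell_p$ estimates of Corollary 5.1 and the support radii of order $m2^nR$ cannot produce a negative power of $R$. A suitably scaled point mass at one point common to both supports satisfies all those constraints, yet gives $\sum_y|f_1(y)||f_2(y,x)|$ of order one. Concretely:
\begin{itemize}
\item H\"older in $y$ gives $\|f_1\|_p\|f_2(\cdot,x)\|_{p'}\le\|f_1\|_p\|f_2(\cdot,x)\|_p=O(1)$ for $p\le 2$.
\item The route $\ell_p\cdot\ell_p\subset\ell_{p/2}\subset\ell_1$ has constant one when $p<2$, so it gains nothing.
\item For $p>2$, H\"older on a ball gives growth, not decay.
\end{itemize}
So the exponent check you planned will fail.

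What the paper uses for (\ref{AA5}) is Bourgain's stopping-time argument from Lemma 5.2. It is applied to the concatenated chain
\[
0=y_0\to\cdots\to y_{j_1}=y=y_{j_2}\to\cdots\to y_m=x
\]
formed by $f_1$ and $f_2$. This chain has at most $m$ edges and $|x|=mR$, so its longest edge has length at least $R$. The steps are:
\begin{itemize}
\item Decompose according to the dyadic scale $2^{n'}R$ of that longest edge and the first index at which it occurs.
\item Bound the kernel $\nabla\nabla^*G_\eta$ on that edge pointwise by $C(2^{n'}R)^{-d}$.
\item Apply Corollary 5.1 and H\"older to the pieces on either side of the edge, as in (\ref{T5})--(\ref{U5}).
\item Sum $(2^{n'}mR)^{2d/p'}(2^{n'}R)^{-d}$ over $n'\ge 0$. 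This converges for $p'>2$ and gives $R^{-(d-2d/p')}=R^{-(2d/p-d)}$, up to powers of $m$.
\end{itemize}
In the language of the appendix, the covariance has reconnected the two components through the new edge at $y$, so you must cut a \emph{second} long edge $[\tau_1-1,\tau_1]$. The factor $R^{-(2d/p-d)}$ comes from the pointwise smallness of $\nabla\nabla^*G_\eta$ on that one edge, not from any integrability of the half-chains.
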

\begin{proof}
We apply the covariance formula (\ref{B5}), (\ref{D5}) and then use deterministic bounds.   Thus we are able to write
\be \label{X5}
{\rm cov}\left[     K^{i,j}_{1,n,\tau}(y_{\tau-1}, r_1,\cdot), \          K^{i',j'}_{2,n,\tau}(y_\tau, x,r_2,\cdot)   \right] \ = \  \sum_{j_1=r_1}^{\tau-1}\sum_{j_2=\tau}^{r_2} \sum_{y\in\mathbb{Z}^d}\left\langle \  h(y, y_{\tau-1},y_\tau, j_1,j_2,\cdot) \ \right\rangle \ ,
\ee
where the function $\om\ra h(y, y_{\tau-1},y_\tau, j_1,j_2,\om)$  corresponds to the gradient operators being applied to  $ \mathbf{b}(\tau_{-y_j}\om)$ with $j=j_1,j_2$ in the formulae (\ref{S5}).  Hence  we have $y_{j_1}=y_{j_2}=y$.   We may write
\begin{multline} \label{Y5}
h(y, y_{\tau-1},y_\tau, j_1,j_2,\om) \ = \  {\rm a \ sum \ of  \ } d^4 {\rm \ terms \ of \ the \ form \ }  \\
f_1(y)g_1(y,y_{\tau-1})g_2(y_\tau,y)f_2(y,x) \ , \\
{\rm where \ } f_1(\cdot), g_1(\cdot, y_{\tau-1})),g_2(y_\tau,\cdot),f_2(\cdot,x) \quad {\rm take \  values \ in \ } \mathbb{R} \ .
\end{multline}
Applying the H\"{o}lder inequality as in (\ref{U5}), we see that
\be \label{Z5}
\begin{array}{ccc}
\sum_{y_{\tau-1}\in\mathbb{Z}^d} |g_1(y, y_{\tau-1}))| \ &\le& \ C_d(1-\la)^{\tau-j_1}\{\kappa_{p,0}+\kappa\}^{\tau-j_1}(2^nmR)^{d/p'} \ , \\
\sum_{y_{\tau}\in\mathbb{Z}^d} |g_2( y_{\tau},y))| \ &\le& \ C_d(1-\la)^{j_2+1-\tau}\{\kappa_{p,0}+\kappa\}^{j_2-\tau}(2^nmR)^{d/p'} \ .
\end{array}
\ee
We also have using Bourgain's method  in the proof of  Lemma 5.2 that
\be \label{AA5}
\sum_{y\in\mathbb{Z}^d} |f_1(y)||f_2(y,x)| \ \le \   \frac{C_dm^{2d+1}(1-\la)^{j_1+m-j_2}\{\kappa_{p,0}+\kappa\}^{j_1+m-j_2}}{R^{2d/p-d}} \ .
\ee
The inequality (\ref{W5}) follows from (\ref{Z5}), (\ref{AA5}). 
 \end{proof}
\begin{proof}[Proof of Hypothesis 4.1]
We sum the covariances (\ref{V5}) over $\tau$ with $1\le \tau\le m$ and $n=0,1,\dots$ using Lemma 5.3. We see from this that the inequality (\ref{AB4}) holds for any $\al<d$. 
\end{proof}
\begin{proof}[Proof of Hypothesis 3.1]
This was already proved in \cite{cs1} in the case of $\mathbf{a}(\tau_x\cdot)= (1+\ga Y_x)I_d$ and  $\mathbf{b}(\tau_x\cdot)=\ga Y_xI_d$, where $Y_x, \ x\in\mathbb{Z}^d,$ are i.i.d.  Bernoulli variables, $Y_x=\pm 1$ with mean $0$.  The same method can be extended to more general i.i.d. variables $\mathbf{b}(\tau_x\cdot), \ x\in\mathbb{Z}^d,$ as was set out in \cite{cn1}. 
Lemma 5.1 can be used to replace the argument in $\S5$ of  \cite{cs1}, which operates through an  identification of the Bernoulli variable space  $L^2(\Om)$  with a Fock space.  In this way a proof of Hypothesis 3.1 for i.i.d. variables can be constructed
which proceeds along similar lines to  the proofs for Euclidean field environments  in  sections 5,6 of \cite{cs1}

We consider fields  $Y:\mathbb{Z}^d\ra\{-1,1\}$ and functions $F(Y(\cdot))$ of the field, which are holomorphic functions  of $Y(x), \ x\in\mathbb{Z}^d$.   Since $Y(\cdot)^2=1$  we may choose a representation of $F(\cdot)$ which has the property
$\pa^2 F(Y(\cdot))/\pa Y(x)^2=0, \ x\in\mathbb{Z}^d$.  For such functions $F(\cdot)$,  Lemma 5.1  implies a Poincar\'{e} inequality in the i.i.d. environment. Thus we have
\be \label{AB5}
{\rm var}[F(Y(\cdot)] \ \le \   \ \sum_{x\in\mathbb{Z}^d}  \left\langle     |dF(x,Y(\cdot))    |^2             \right\rangle  \  ,
\ee
where $dF(x,Y(\cdot))=\pa F(Y(\cdot))/\pa Y(x), \ x\in\mathbb{Z}^d,$ is the gradient field for $F(\cdot)$. 
The proof of Hypothesis 3.1 in $\S5$ of \cite{cs1} is based on having a  Poincar\'{e} inequality.  In particular, the proof of Proposition 5.3 of \cite{cs1}  goes through for the Bernoulli environment, but with some caveats.  The inequality (\ref{AB5}) above 
implies the inequality (5.28) of \cite{cs1} with $m=1$. However the gradient field equation   (5.29) of \cite{cs1} does not hold in the precise form. The reason is that the Leibniz product rule for derivatives does not hold for the functions $F(Y(\cdot))$ 
we are considering. We have $F(Y(\cdot))=a+bY_0$, where $a,b$ depend only on the variables $Y(x), \ x\ne 0$.  Observe now that
\begin{multline} \label{AC5}
Y_0F(Y(\cdot)) \ = \ b+aY_0, \quad \frac{\pa}{\pa Y_0} [Y_0F(Y(\cdot))] \ = \ a \ , \quad Y_0\frac{\pa F(Y(\cdot))}{\pa Y_0} \ = \ bY_0 \ , \\
\frac{\pa}{\pa Y_0} [Y_0F(Y(\cdot))] \ = \   Y_0\frac{\pa F(Y(\cdot))}{\pa Y_0} + F^R(Y(\cdot)) \ , \quad F^R(Y(\cdot))  \ = \ a-bY_0 \ .
\end{multline}
We consider (5.17) of \cite{cs1}, which for the Bernoulli environment is given by
\be \label{AD5}
\eta\Phi(\xi,\eta,Y(\cdot))+ \pa_\xi^*[1+\ga Y_0]\pa_\xi\Phi(\xi,\eta,Y(\cdot)) \ = \ -\pa^*_\xi [1+\ga Y_0]I_d \ .
\ee
We take $F(Y(\cdot))=\Phi(\xi,\eta,Y(\cdot))v$ for any $v\in\mathbb{C}^d$ and apply the gradient operator to (\ref{AD5}) acting on  $v$. We obtain the equation
\be \label{AE5}
\eta dF(\cdot,Y(\cdot))+ D_\xi^*D_\xi F(Y(\cdot)) +\ga D^*_\xi d[Y_0\pa_\xi F(Y(\cdot))] \ = \ -D^*_\xi \ga \del(\cdot)v \ .
\ee
It follows from (\ref{AC5}) that
\be \label{AF5}
d[Y_0\pa_\xi F(Y(\cdot))] \ = \  Y_0D_\xi dF(\cdot,Y(\cdot))+\del(\cdot) [\pa_\xi F(Y(\cdot))]^R \ .
\ee
We conclude from (\ref{AE5}), (\ref{AF5}) that the equation corresponding to (5.29) of \cite{cs1} is
\be \label{AG5}
\eta dF(\cdot,Y(\cdot))+ D_\xi^*[1+\ga Y_0]D_\xi F(Y(\cdot)) \ = \ -D^*_\xi \ga \del(\cdot)\{v+[\pa_\xi F(Y(\cdot))]^R]\} \ . 
\ee
Since $\|[\pa_\xi F(Y(\cdot))]^R]\|_{L^2(\Om;\mathbb{C}^d)}=\|\pa_\xi F(Y(\cdot))]\|_{L^2(\Om;\mathbb{C}^d)}$, the remainder of the argument of Proposition 5.3 of \cite{cs1} goes through as before.
\end{proof}
\vspace{.1in}
\section{Euclidean  Massless Gradient  Field  Environment}
Hypothesis 3.1 was already proved in \cite{cs1} for the massless field environment. To prove  Hypothesis 4.1 we use the alternative method for generating the massless field measure which was set out in \cite{cd}. 
First we derive a covariance formula analogous to the Helffer-Sj\"{o}strand formula \cite{hs}. We are interested in functions of the gradient field $\om(x)=\na\phi(x), \ x\in\mathbb{Z}^d$,  where $\phi:\mathbb{Z}^d\ra\mathbb{R}$. 
We associate with two functions $F(\om(\cdot)), \ G(\om(\cdot))$ a function $q(\al,\beta), \ \al,\beta\in\mathbb{R}$ defined by 
\be \label{A6}
q(\al,\beta) \ = \ -\log\left\{\left\langle      \exp\left[-\al  F(\om(\cdot))-\beta G(\om(\cdot))  \right]           \right\rangle \right\} \ , \quad \al,\beta\in\mathbb{R} \ .
\ee
Differentiating (\ref{A6}) we obtain the formulae,
\be \label{B6}
\begin{array}{ccc}
\frac{\pa q(\al,\beta)}{\pa\al} \ &=& \  \left\langle F(\om(\cdot))\right\rangle_{\al,\beta} \ , \\
\frac{\pa^2 q(\al,\beta)}{\pa\al\pa\beta} \ &=& \  -{\rm cov}_{\al,\beta} [F(\om(\cdot)), G(\om(\cdot))] \ , 
\end{array}
\ee
where $\langle\cdot\rangle_{\al,\beta}$ and ${\rm cov}_{\al,\beta}[\cdot,\cdot]$ denote expectation and covariance with respect to the probability measure on fields $\phi:\mathbb{Z}^d\ra\mathbb{R}$  with density proportional to the function
\be \label{C6}
\phi(\cdot) \ \ra \ \exp\left[  -\al F(\na(\phi(\cdot))-\beta G(\na\phi(\cdot))  -\sum_{x\in\mathbb{Z}^d} V(\na\phi(x))+m^2\phi(x)^2                                  \right]  \ .
\ee

Following the argument of $\S4$ of \cite{cd} we observe that the  probability measure corresponding to (\ref{C6}) is the invariant measure for the stochastic dynamics of a function $\phi_{\al,\beta}(x,t), \ x\in\mathbb{Z}^d, \ t\ge 0$.
The dynamical equation of motion is given by 
\begin{multline} \label{D6}
d\phi_{\al,\beta}(x,t) \ =  \ -\frac{1}{2}\Big[     \phi_{\al,\beta}(x,t) +[-\De+m^2]^{-1}\na^*\big\{\al D_{\om(x)}F(\na\phi_{\al,\beta}(\cdot,t))+\beta D_{\om(x)}G(\na\phi_{\al,\beta}(\cdot,t)) \\
+V'(\na\phi_{\al,\beta}(x,t))-\na\phi_{\al,\beta}(x,t)\big\}                      \Big]dt+[-\De+m^2]^{-1/2}dB(x,t) \ , \quad x\in\mathbb{Z}^d \ ,
\end{multline}
where $B(x,\cdot),  \ x\in\mathbb{Z}^d,$  are independent Brownian motions. Letting $\om_{\al,\beta}(\cdot,t)=\na \phi_{\al,\beta}(\cdot,t)$ we obtain from (\ref{D6}) a stochastic evolution equation for  $t\ra  \om_{\al,\beta}(\cdot,t)$,
\begin{multline} \label{E6}
d\om_{\al,\beta}(x,t) \ =  \ -\frac{1}{2}\Big[     \om_{\al,\beta}(x,t) +\na[-\De+m^2]^{-1}\na^*\big\{\al D_{\om(x)}F(\om_{\al,\beta}(\cdot,t))+\beta  D_{\om(x)}G(\om_{\al,\beta}(\cdot,t)) \\
+V'(\om_{\al,\beta}(x,t))-\om_{\al,\beta}(x,t)\big\}                       \Big]dt+\na[-\De+m^2]^{-1/2}dB(x,t) \ , \quad x\in\mathbb{Z}^d \ .
\end{multline}
We then have as in (4.4) of \cite{cd} that
\be \label{F6}
\lim_{T\ra\infty} \left\langle F(\om_{\al,\beta} (\cdot,T))\right\rangle \ = \ \left\langle F(\om(\cdot))\right\rangle_{\al,\beta} \ .
\ee
It follows from (\ref{B6}), (\ref{F6}) that
\be \label{G6}
\frac{\pa q(\al,\beta)}{\pa\al} \ = \ \lim_{T\ra\infty} \left\langle F(\om_{\al,\beta} (\cdot,T))\right\rangle \ .
\ee
Differentiating (\ref{G6}) with respect to $\beta$ and using (\ref{B6}) we conclude that 
\be \label{H6}
{\rm cov}_{\al,\beta} [F(\om(\cdot)), G(\om(\cdot))] \ = \  -\lim_{T\ra\infty} \sum_{x\in\mathbb{Z}^d} \left\langle \left[D_{\om(x)} F(\om_{\al,\beta} (\cdot,T)), \  \frac{\pa \om_{\al,\beta} (x,T)}{\pa\beta}\right]_{\mathbb{R}^d} \right\rangle \ .
\ee
To obtain a formula for the field covariance of $F(\om(\cdot))$ and $G(\om(\cdot))$ we set $\al=\beta=0$ in (\ref{H6}). We may obtain an evolution equation for the function  $t\ra  \pa\om_{0,0}(\cdot,t)/\pa\beta$ by differentiating (\ref{E6}) with respect to 
$\beta$ and setting $\al=\beta=0$. The resulting evolution equation is given by 
\begin{multline} \label{I6}
d\left\{\frac{\pa \om_{0,0}(x,t)}{\pa\beta} \right\} \ =  \Bigg[ -\frac{1}{2}  \frac{\pa\om_{0,0}(x,t)}{\pa\beta} \\
 +\frac{1}{2}\na[-\De+m^2]^{-1}\na^*\left\{   
I_d-V''(\om_{0,0}(x,t)) \right\}  \frac{\pa\om_{0,0}(x,t)}{\pa\beta} -\frac{1}{2}D_{\om(x)}G(\om_{0,0}(\cdot,t))\Bigg]dt \ , \quad x\in\mathbb{Z}^d \ .
\end{multline}

Note that (\ref{I6}), being a first variation equation of a stochastic differential equation, is linear.  We also have that $I_d-V''(\om)=\mathbf{b}(\om), \  \om\in\mathbb{R}^d$, where the symmetric $d\times d$ matrix $\mathbf{b}(\cdot)$ satisfies the quadratic form inequality 
$0\le \mathbf{b}(\cdot)\le (1-\la)I_d$.  Let $\mathcal{L}_T$ be the linear operator on functions $f:\mathbb{Z}^d\times [0,T]\ra\mathbb{R}^d$ defined by
\be \label{J6}
\mathcal{L}_Tf(\cdot,t) \ = \ \frac{1}{2}\int_0^t e^{_(t-s)/2}  \ ds \  \na[-\De+m^2]^{-1}\na^*\mathbf{b}( \om_{0,0}(\cdot,s))f(\cdot,s) \ .
\ee
Then $f(\cdot,t)= -\pa\om_{0,0}(\cdot,t)/\pa\beta$ is a solution to the affine equation,
\be \label{K6}
f(\cdot,t) \ = \ \mathcal{L}_T f(\cdot,t) + \frac{1}{2}\int_0^t e^{-(t-s)/2} g(\cdot,s) \ ds  \ ,  \quad g(\cdot,s) \ = \ D_{\om(\cdot)}G(\om_{0,0}(\cdot,s)) \ .
\ee
\begin{lem}
The following covariance formula holds: 
\begin{multline} \label{L6}
{\rm cov} [F(\om(\cdot)), G(\om(\cdot))] \ = \
\sum_{x,x'\in\mathbb{Z}^d} \left\langle \left[D_{\om(x)} F(\om (\cdot)), \  K(x,x',\om(\cdot))D_{\om(x')} G(\om (\cdot))\right]_{\mathbb{R}^d} \right\rangle \ ,
\end{multline}
where the kernel $(x,x')\ra K(x,x',\om(\cdot))\in\mathbb{R}^d\otimes \mathbb{R}^d$ satisfies an inequality $|K(x,x',\om(\cdot))v|\le C|v|/[1+|x-x'|^k], \ v\in\mathbb{R}^d,  \ x,x'\in\mathbb{Z}^d,$
for some $k>0$ and  constant $C$ depending only on $\la,d$. 
\end{lem}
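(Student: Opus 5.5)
The plan is to derive the formula from (\ref{H6}) at $\al=\beta=0$, solve the affine equation (\ref{K6}) by a Neumann series in $\mathcal{L}_T$, and then pass to the limits $T\ra\infty$, $m\ra0$. Write $\om(\cdot,t)=\om_{0,0}(\cdot,t)$. Since the dynamics (\ref{E6}) at $\al=\beta=0$ is stationary with respect to the field measure, we may start it in equilibrium, so $\om(\cdot,t)$ has the law of the field for every $t$. Then (\ref{H6}) gives
\[
{\rm cov}[F,G] \ = \ \lim_{T\ra\infty}\sum_{x}\left\langle \left[D_{\om(x)}F(\om(\cdot,T)),\, f(x,T)\right]_{\mathbb{R}^d}\right\rangle ,
\]
where $f$ solves (\ref{K6}). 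Iterating (\ref{K6}) expresses $f(\cdot,T)$ as a sum over $n\ge0$ of time-ordered integrals. In the $n$th term, the forcing $g(\cdot,s_0)=D_{\om}G(\om(\cdot,s_0))$ is acted on successively by the operators $\frac12 e^{-(s_{j}-s_{j-1})/2}\,\na[-\De+m^2]^{-1}\na^*\mathbf{b}(\om(\cdot,s_{j-1}))$ for $j=1,\dots,n$, after an initial weight $\frac12 e^{-(s_1-s_0)/2}$; these are exactly the factors contained in $\mathcal{L}_T$ (I read the exponent in (\ref{J6}) as $-(t-s)/2$).

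The next step is to reverse time using stationarity and reversibility of the Langevin dynamics. This moves the evaluation point of $D_\om F$ to time $0$, and the covariance becomes $\langle [D_\om F(\om(\cdot,0)),\, \mathcal{K}\, D_\om G(\om(\cdot,0))]\rangle$. Here $\mathcal{K}$ is a conditional expectation, given $\om(\cdot,0)$, of the path-ordered product above. Define $K(x,x',\om)$ as the kernel of this averaged operator; this yields (\ref{L6}).

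For the kernel bound, write $S=\na[-\De+m^2]^{-1}\na^*$, a convolution operator whose kernel is that of $\na\na^*G_{m^2}$. The time integrals contribute a factor $\int e^{-t/2}(t/2)^n/n!\,dt$ per term, so the $n$th term is bounded pathwise by the kernel of $(S\mathbf{b}_{s_n})\cdots(S\mathbf{b}_{s_1})$ with a total weight at most $1$ after summing over the time simplex. Because $\|S\|\le1$ on $\ell_2$ and $\|\mathbf{b}\|\le 1-\la$, the series converges in $\ell_2$ operator norm. For pointwise decay we use Theorem 5.1 instead: $S$ is bounded on the weighted space $\ell_{p,k}(\mathbb{Z}^d;\mathbb{C}^d)$ with norm $\kappa_{p,k}$, uniformly in $m$. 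Interpolating with the $\ell_2$ bound, where $\kappa=1$, we choose $p$ near $2$ and $k>0$ small enough that $(1-\la)\kappa_{p,k}<1$. The weighted Neumann series then converges. Applying it to $\del_{x'}$ and using translation invariance, together with the $\ell_{p,k}\to\ell_\infty$ duality argument of Lemma 5.2 (Bourgain's splitting by the largest jump, with $\chi_R$ and Corollary 5.1), gives $|K(x,x',\om)v|\le C|v|/(1+|x-x'|^{k})$ for some $k>0$ depending only on $d,\la$. Finally we let $m\ra0$, using that all bounds are uniform in $m$.

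The main obstacle is the time-reversal and conditional-expectation step. Pathwise, the kernel depends on the whole trajectory, so $K$ is really a conditional average. The pointwise weighted bound must therefore hold uniformly over paths, and it must survive the averaging. I would handle this by proving the weighted bound deterministically for arbitrary time-dependent $\mathbf{b}(\cdot,s)$ with $0\le\mathbf{b}\le(1-\la)I_d$, exactly as in (\ref{T5}). The averaging then preserves the bound by convexity.
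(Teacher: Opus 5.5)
Your kernel estimate is the paper's argument. Iterating (\ref{K6}) gives a time-ordered Neumann series whose $n$th term carries $(S\mathbf{b}_{s_n})\cdots(S\mathbf{b}_{s_1})$ with a time weight of total mass at most $1$. Theorem 5.1 with $(1-\la)\kappa_{2,k}<1$ for small $k>0$ then makes the series converge in the weighted norm. The paper phrases this as $\|T_{K,t,T}\|\le\exp[\kappa_{2,k}(1-\la)(T-t)/2]$ against the weight $\frac12 e^{-(T-t)/2}$ in (\ref{M6}).

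Since $S$ is a convolution and $\mathbf{b}$ a pointwise multiplier, the same bound holds with the weight $1+|x-x'|^k$ centred at any $x'$. Applying the operator to $\del_{x'}v$ then gives $(1+|x-x'|^k)|K(x,x')v|\le C|v|$ directly, because a weighted $\ell_p$ norm dominates the weighted supremum. The detour through Bourgain's splitting and Corollary 5.1 is unnecessary and would actually hurt. It costs a factor $\kappa_{p,0}+\kappa$ per step, with $\kappa$ an absolute constant, so the sum over $n$ would converge only if $(1-\la)(\kappa_{p,0}+\kappa)<1$.

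The step that fails is the time reversal. In the path representation $D_\om F$ sits at time $T$ and $D_\om G$ at the earliest time $s_0$ of the ordered product (the $t$ of (\ref{M6})). Reversing time only exchanges which of the two is at time $0$. Conditioning on $\om(\cdot,0)$ therefore yields $E[\Pi\, D_\om G(\om(\cdot,T-s_0))\mid\om(\cdot,0)]$. This is an operator acting on the \emph{function} $D_\om G$ through the dynamics, not an $\om$-dependent matrix kernel applied to the vector $D_\om G(\om(\cdot,0))$.

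No bounded kernel of that pointwise type can exist in general. Already for a standard Gaussian $X$ on $\mathbb{R}$, test ${\rm var}\,F(X)=\langle k(X)F'(X)^2\rangle$ on $F(x)=\psi((x-a)/\ve)$ and let $\ve\to0$. This forces $k=0$, which contradicts the case $F(x)=x$.

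The paper makes no such conversion. Its proof stops at (\ref{M6}): a path-dependent kernel $K(x,x',t,T,\om_{0,0}(\cdot,\cdot))$ between $D_\om F$ at time $T$ and $D_\om G$ at time $t$, with a bound uniform in the path, and then $T\to\infty$. That is all Section 6 uses, since the bounds on $f_1,f_2$ there are deterministic.

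So drop the reversal and the conditional averaging, and read (\ref{L6}) as shorthand for (\ref{M6}). Your deterministic bound for arbitrary time-dependent $0\le\mathbf{b}\le(1-\la)I_d$ is then exactly what is needed, and the ``main obstacle'' you identify disappears.
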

\begin{proof}
We follow the argument in the proof of Proposition 4.1 of \cite{cd}.  Thus we have from (\ref{H6})-(\ref{K6})  that
\begin{multline} \label{M6}
{\rm cov} [F(\om(\cdot)), G(\om(\cdot))] \ = \  \lim_{T\ra\infty} \frac{1}{2}\int_0^T dt \ e^{-(T-t)/2} \ \times \\
\sum_{x,x'\in\mathbb{Z}^d} \left\langle \left[D_{\om(x)} F(\om_{0,0} (\cdot,T)), \  K(x,x',t,T,\om_{0,0}(\cdot,\cdot))D_{\om(x')} G(\om_{0,0} (\cdot,t))\right]_{\mathbb{R}^d} \right\rangle \ ,
\end{multline}
where the function $K$ in (\ref{M6})  an be obtained from the function  $a_n(\cdot,t,T)$ defined by (4.22) of \cite{cd}  as $n\ra\infty$.  Let $T_{K,t,T}$ be the operator on $\ell_2(\mathbb{Z}^d;\mathbb{R}^d)$
defined by  
\be \label{N6}
T_{K,t,T} g(x') \ = \ \sum_{x\in\mathbb{Z}^d}  g(x)^* K(x,x',t,T,\om_{0,0}(\cdot,\cdot)) \ , \quad x'\in\mathbb{Z}^d \ .
\ee
One sees from the formula (4.22) of \cite{cd} that $T_{K,t,T}$ is a  bounded operator on $\ell_2(\mathbb{Z}^d;\mathbb{R}^d)$ with norm satisfying $\left\|         T_{K,t,T}            \right\|_{\ell_2(\mathbb{Z}^d;\mathbb{R}^d)}\le\exp[(1-\la)(T-t)/2]$. 
We also see  upon using  Theorem 5.1 that $T_{K,t,T}$ is a  bounded operator on $\ell_{p,k}(\mathbb{Z}^d;\mathbb{R}^d)$ with norm satisfying $\left\|         T_{K,t,T}            \right\|_{\ell_2(\mathbb{Z}^d;\mathbb{R}^d)}\le\exp[\kappa_{p,k}(1-\la)(T-t)/2]$. 
Since $\kappa_{2,0}=1$ we conclude from continuity of norms that there is a $k>0$ such that $\kappa_{2,k}(1-\la)<1$. In that case the RHS of (\ref{M6}) remains bounded as $T\ra\infty$.  Since $\om(\cdot,T)$ converges in distribution to $\om(\cdot)$ as $T\ra\infty$ we obtain the formula (\ref{L6}), with the kernel $K$ satisfying the inequality in the statement of the Lemma. 
\end{proof}
\begin{proof}[Proof of Hypothesis 4.1]
We proceed similarly to the proof of Lemma 5.3, now using the covariance formula (\ref{L6}). The main difference from the proof of Hypothesis 4,1 in $\S5$ is that the formulae (\ref{X5})-(\ref{AA5}) need to be slightly altered since now $y_{j_1}=y, 
y_{j_2}=y'$  and we no longer have the constraint $y=y'$.  Using Lemma  6.1 the inequality (\ref{AA5}) is replaced by
\begin{multline} \label{O6}
\sum_{y,y'\in\mathbb{Z}^d} \frac{|f_1(y)||f_2(y',x)|}{[1+|y-y'|^k]} \\
 \le \   \sum_{y,y'\in\mathbb{Z}^d}\frac{C_dm^{2d+1}(1-\la)^{j_1+m-j_2}\{\kappa_{p,0}+\kappa\}^{j_1+m-j_2}}{[1+|y|]^{2d/p-d}[1+|y-y'|]^k[1+|y'-x|]^{2d/p-d}} \ .
\end{multline}
On choosing $p$ in (\ref{O6}) close to $1$ we see that the sum on the RHS is bounded by  $C^mC_\ve/[1+|x|]^{k-\ve}$ for any $\ve>0$, where the constant $C_\ve$ depends on $\ve$ and diverges as $\ve\ra0$. 
\end{proof}

\vspace{.1in}

\appendix

\section{On Bourgain and Kim-Lemm estimates for  $\hat{h}_m(\cdot,\eta)$}
In \cite{b} Bourgain proved Hypothesis 4.1 for the i.i.d. case, showing that (\ref{AB4}) holds  for any value of $\al<d$. This result was improved by Kim and Lemm \cite{kl},
who showed that in the i.i.d. case (\ref{AB4}) holds for any value of $\al<2d$.  Their methodology is essentially a further iteration of Bourgain's method.  Here we illustrate graphically
Bourgain's method,  the Kim-Lemm improvement of Bourgain's result, and why no further improvement is possible.

In the graphs below we illustrate  Bourgain's proof of Lemma 5.2 (Lemma 1 of \cite{b}), and then the proof of Hypothesis 4.1 for $\al<d$ in (\ref{AB4}). The first graph  illustrates the proof of Lemma 5.2. Thus we begin with the simplest of connected 
 tree graphs, on $m+1$ vertices with $m$ edges.  The highlighted edge $[\tau-1,\tau]$ corresponds to the gap $|y_\tau-y_{\tau-1}|\ge 2^nR$. On removing this edge we are left with two connected components of  tree graphs, one containing $y_0=0$ and the other $y_m=x$. 
 Using the approximate uniform summability of the kernel $\na\na^*G_\eta(\cdot)$ for $\eta>0$,   we may contract these graphs down to $y_0$ and $y_m$ respectively at a cost of an exponential factor in the constant. Thus  we obtain an inequality
 $|\hat{h}_m(x,\om,\eta)|\le C_\al^m/(1+|x|)^\al$ for any $\al<d$. 
 
 The second graph below illustrates Bourgain's proof of Hypothesis 4.1 for $\al<d$.  The edge $[\tau-1,\tau]$ has been removed from the first graph, leaving a disconnected graph with two components. These are now reconnected by using the covariance
 formula (\ref{X5}).  Thus we  have a connected graph on the  vertices $0,1,\dots,j_1,j_2,\dots,m$. Hence there is an edge $[\tau_1-1,\tau_1]$ on this graph with $|y_{\tau_1}-y_{\tau_1-1}|\simeq R$.   Removing this edge, we obtain two connected components of  tree graphs, one containing the vertex $y_0$ and the other the vertex $y_m$. We may now contract these as before, yielding the bound (\ref{AB4}) with $\al<d$. 

\begin{tikzpicture}[thick]
\draw[red][-] (5,0)--(6,0);
\fill (0,0) circle(2pt);
\fill (12,0) circle(2pt);
\fill (0.5,0) circle(2pt);
\fill (1,0) circle(2pt);
\fill (5,0) circle(2pt);
\fill (6,0) circle(2pt);
\draw[-] (0,0)--(5,0);
\draw[-] (6,0)--(12,0);
\draw (0,0) node[anchor=north] {$0$};
\draw (0.5,0) node[anchor=north] {$1$};
\draw (12,0) node[anchor=north] {$m$};
\draw (1,0) node[anchor=north] {$2$};
\draw (5,0) node[anchor=north] {$\tau-1$};
\draw (6,0) node[anchor=north] {$\tau$};
\draw (5,1.5) node[anchor=north] {\bf Bourgain Estimate of   $\hat{h}_m(x,\om,\eta)$};
\draw (5,-1.5) node[anchor=north] {\bf Bourgain Estimate of   $\hat{h}_m(x,\eta)$};
\fill (0,-3) circle(2pt);
\fill (12,-3) circle(2pt);
\fill (0.5,-3) circle(2pt);
\fill (1,-3) circle(2pt);
\fill (5,-3) circle(2pt);
\fill (6,-3) circle(2pt);
\draw[-] (3.5,-3)--(4,-3);
\draw[-] (0,-3)--(2.5,-3);
\draw[-] (3.5,-3)--(5,-3);
\draw[-] (6,-3)--(12,-3);
\fill (2.5,-3) circle(2pt);
\fill (3.5,-3) circle(2pt);
\draw (0,-3) node[anchor=north] {$0$};
\draw (0.5,-3) node[anchor=north] {$1$};
\draw (12,-3) node[anchor=north] {$m$};
\draw (1,-3) node[anchor=north] {$2$};
\draw (5,-3) node[anchor=north] {$\tau-1$};
\draw (6,-3) node[anchor=north] {$\tau$};
\fill (4,-3) circle(2pt);
\fill (8,-3) circle(2pt);
\draw (4,-3) node[anchor=north] {$j_1$};
\draw (8,-3) node[anchor=north] {$j_2$};
\draw[blue] (8,-3) arc (60:120: 4 cm);
\draw (2.5,-3) node[anchor=north] {$\tau_1-1$};
\draw (3.5,-3) node[anchor=north] {$\tau_1$};
\draw[red][-] (2.5,-3)--(3.5,-3);

\end{tikzpicture}

Note that the second graph above is an iteration of Bourgain's method for the first graph. Kim-Lemm \cite{kl} carried out a further iteration of the second graph, as illustrated below:

\begin{tikzpicture}[thick]

\draw (5,-2.5) node[anchor=north] {\bf Kim-Lemm Estimate of   $\hat{h}_m(x,\eta)$};
\fill (0,-4) circle(2pt);
\fill (12,-4) circle(2pt);
\fill (0.5,-4) circle(2pt);
\fill (1,-4) circle(2pt);
\fill (5,-4) circle(2pt);
\fill (6,-4) circle(2pt);
\draw[-] (3.5,-4)--(4,-4);
\draw[-] (0,-4)--(2.5,-4);
\draw[-] (3.5,-4)--(5,-4);
\draw[-] (6,-4)--(12,-4);
\fill (2.5,-4) circle(2pt);
\fill (3.5,-4) circle(2pt);
\draw (0,-4) node[anchor=north] {$0$};
\draw (0.5,-4) node[anchor=north] {$1$};
\draw (12,-4) node[anchor=north] {$m$};
\draw (1,-4) node[anchor=north] {$2$};
\draw (5,-4) node[anchor=north] {$\tau-1$};
\draw (6,-4) node[anchor=north] {$\tau$};
\fill (4,-4) circle(2pt);
\fill (8,-4) circle(2pt);
\draw (4,-4) node[anchor=north] {$j_1$};
\draw (8,-4) node[anchor=north] {$j_2$};
\draw[blue] (8,-4) arc (60:120: 4 cm);
\draw (2.5,-4) node[anchor=north] {$\tau_1-1$};
\draw (3.5,-4) node[anchor=north] {$\tau_1$};
\draw[blue] (4.5,-4) arc (60:120: 3 cm);
\end{tikzpicture}

The edge $[\tau_1-1,\tau_1]$ is removed from the second  of the Bourgain graphs. Then, on using the covariance formula (\ref{X5}), a new edge is introduced linking the two connected components. Thus we obtain another  connected tree graph
on the  $m+1$ vertices, yielding a proof of Hypothesis 4.1 with $\al$ in (\ref{AB4}) up to (but not including)  $2d$.

Finally we show why there can be no further iteration of Bourgain's method.  We have already observed that the Kim-Lemm graph is connected, whence there is an edge $[\tau_2-1,\tau_2]$ such that $|y_{\tau_2}-y_{\tau_2-1}|\simeq R$. 
On removing this edge and contracting the resulting two connected components of the graph we obtain (\ref{AB4})  for $\al<2d$. Using the covariance formula (\ref{X5}) we may introduce another edge in the graph with  vertices at nodes $r_1,r_2$
with $r_1\le\tau_2-1$ and $r_2\ge \tau_2$.  We have illustrated this below  with the edge in red. Observe that the resulting graph has still two connected components, with $0$ in one component and $m$ in the other. 
The reason  is that this further iteration of Bourgain's method  can result in a graph containing  a cycle. Therefore  Bourgain's method cannot be used to improve $\al$ in (\ref{AB4}) beyond $2d$. One can see by direct calculation in the case of a Bernoulli environment that (\ref{AB4}) cannot be improved beyond $\al<2d$. 

\begin{tikzpicture}[thick]
\draw (5,-2.5) node[anchor=north] {\bf Creating a Cycle};
\fill (0,-4) circle(2pt);
\fill (12,-4) circle(2pt);
\fill (8,-4) circle(2pt);
\fill (9,-4) circle(2pt);
\draw[-] (3.5,-4)--(5,-4);
\draw[-] (0,-4)--(2.5,-4);
\draw[-] (6,-4)--(8,-4);
\draw[-] (9,-4)--(12,-4);
\fill (2.5,-4) circle(2pt);
\fill (3.5,-4) circle(2pt);
\draw (0,-4) node[anchor=north] {$0$};
\draw (12,-4) node[anchor=north] {$m$};
\draw (8,-4) node[anchor=north] {$\tau-1$};
\draw (9,-4) node[anchor=north] {$\tau$};
\fill (4,-4) circle(2pt);
\fill (10,-4) circle(2pt);
\draw (4,-4) node[anchor=north] {$j_1$};
\draw (10,-4) node[anchor=north] {$j_2$};
\draw[blue] (10,-4) arc (60:120: 6 cm);
\draw (2.5,-4) node[anchor=north] {$\tau_1-1$};
\draw (3.5,-4) node[anchor=north] {$\tau_1$};
\draw[blue] (10,-4) arc (60:120: 6 cm);
\draw (5,-4) node[anchor=north] {$\tau_2-1$};
\draw (6,-4) node[anchor=north] {$\tau_2$};
\fill (5,-4) circle(2pt);
\fill (6,-4) circle(2pt);
\draw[blue] (7.5,-4) arc (60:120: 6 cm);
\draw[red] (1,-4) arc (240:300: 6 cm);
\end{tikzpicture}

\end{document}